\documentclass[letterpaper,10 pt,conference]{ieeeconf}
\IEEEoverridecommandlockouts

\usepackage{times}
\let\labelindent\relax
\usepackage{enumitem}

\usepackage[dvipsnames]{xcolor}

\usepackage{xspace}

\usepackage{etoolbox}

\usepackage{amsmath,amssymb,mathtools,nccmath, stmaryrd}
\allowdisplaybreaks

\usepackage{amsthm}
\usepackage{bm}
\usepackage{bbold,dsfont,bbm}

\usepackage{graphicx}
\usepackage{tikz,tikz-cd,pgfplots}
\usepackage[export]{adjustbox}
\pgfplotsset{compat=1.17}

\usepackage{multirow}
\usepackage{booktabs}

\usepackage{comment}

\usepackage{units}
\usepackage[per-mode=symbol]{siunitx}[=v2]
\DeclareSIUnit{\eur}{\euro}
\DeclareSIUnit{\usd}{USD}
\DeclareSIUnit{\mph}{mph}
\DeclareSIUnit{\month}{month}
\DeclareSIUnit{\year}{year}
\DeclareSIUnit{\million}{Mil}
\DeclareSIUnit{\mile}{mile}
\DeclareSIUnit{\car}{car}
\DeclareSIUnit{\train}{train}
\DeclareSIUnit{\mmveh}{\text{$\mu$}MV}
\DeclareSIUnit{\nounit}{-}
\usepackage[font=footnotesize]{caption}

\usepackage{subcaption}

\usepackage{array}
\newcommand{\PreserveBackslash}[1]{\let\temp=\\#1\let\\=\temp}
\newcolumntype{C}[1]{>{\PreserveBackslash\centering}p{#1}}
\newcolumntype{R}[1]{>{\PreserveBackslash\raggedleft}p{#1}}
\newcolumntype{L}[1]{>{\PreserveBackslash\raggedright}p{#1}}

\renewcommand{\baselinestretch}{0.961}

\theoremstyle{definition}
\newtheorem{assumption}{Assumption}
\newtheorem*{assumption*}{Assumption}
\newtheorem{theorem}{Theorem}

\newtheorem{lemma}{Lemma}

\newtheorem{definition}{Definition}
\newtheorem{counterexample}{Counterexample}
\theoremstyle{remark}
\newtheorem{remark}{Remark}
\newtheorem{example}{Example}

\usepackage[acronym]{glossaries-extra}
\setabbreviationstyle[acronym]{long-short}
\glssetcategoryattribute{acronym}{nohyperfirst}{true}

\makeatletter
\let\NAT@parse\undefined
\makeatother

\usepackage[hidelinks, citecolor=blue, urlcolor=cyan]{hyperref}
\usepackage[capitalise,noabbrev]{cleveref}
\crefname{counterexample}{counterexample}{counterexamples}
\Crefname{counterexample}{Counterexample}{Counterexamples}
\usepackage{cite}

\Crefname{figure}{Fig.}{Figures}
\crefname{equation}{Eq.}{Eqs.}

\makeatletter
    \if@cref@capitalise
        \crefname{subsection}{Section}{Sections}
        \crefname{subsubsection}{Section}{Sections}
        \crefname{assumption}{Assumption}{Assumptions}
        \crefname{problem}{Problem}{Problems}
    \else
        \crefname{subsection}{section}{sections}
        \crefname{subsubsection}{section}{sections}
        \crefname{assumption}{assumption}{assumptions}
        \crefname{problem}{problem}{problems}
    \fi
\makeatother

\usetikzlibrary{positioning,intersections, hobby, patterns, calc, decorations.pathmorphing, decorations.markings, shadows,shapes, cd,arrows.meta, fit,quotes}
\tikzset{
   tick/.style={postaction={
      decorate,
      decoration={markings, mark=at position 0.5 with {\draw[-] (0,.4ex) -- (0,-.4ex);}}}
   }
}
\tikzstyle{block} = [draw, rectangle, minimum height=2em, minimum width=3em,font=\bfseries,rounded corners,thick]
\tikzstyle{block1} = [draw, rectangle, minimum height=1.5em, minimum width=2.5em]
\tikzstyle{blockDyn} = [draw, rectangle, minimum height=2.5em, minimum width=3.5em, align=center, inner sep=10pt, thick, fill=white, copy shadow={draw=black,fill=black,opacity=1,shadow xshift=0.5ex,shadow yshift=-0.5ex}]
\tikzstyle{blockAlg} = [draw, rectangle, minimum height=1.5em, minimum width=2.5em, align=center, inner sep=10pt, thick]
\tikzstyle{sum} = [draw,circle]

\tikzstyle{blockfill} = [block,rounded corners=4,fill=white]

\tikzstyle{nodePre} = [circle, draw,inner sep=1pt,node contents={$\preceq$},thick]
\tikzstyle{nodePreEmpty} = [circle, draw,inner sep=1pt,thick]
\tikzstyle{nodePos} = [circle, draw,inner sep=1pt,node contents={$\posceq$},thick]
\tikzstyle{nodeProd} = [rectangle, draw,inner sep=4pt,node contents={$\times$},rounded corners,thick]
\tikzstyle{nodeSum} = [rectangle, draw,inner sep=4pt,node contents={$\mathbf{+}$},rounded corners,thick]

\definecolor{DPgreen}{rgb}{0.0, 0.5, 0.0}
\definecolor{red}{rgb}{0.75, 0.0, 0.0}

\newif\ifmargincomments 
\margincommentstrue

\newif\ifextendedversion

\ifmargincomments

\else

\fi

\definecolor{lightblue}{rgb}{0.60784,0.76078,0.90196}
\definecolor{darkblue}{rgb}{0.26667,0.44706,0.76863}
\definecolor{lightgreen}{rgb}{0.66275,0.81569,0.55686}
\definecolor{darkgreen}{rgb}{0.43922,0.67843,0.27843}
\definecolor{orange}{rgb}{0.92941,0.49020,0.19216}
\definecolor{yellow}{rgb}{1.00000,0.75294,0.00000}
\definecolor{grey}{rgb}{0.64706,0.64706,0.64706}
\definecolor{purple}{rgb}{0.51373,0.23529,0.04706}

\usetikzlibrary{positioning,intersections, hobby, patterns, calc, decorations.pathmorphing, decorations.markings, shadows,shapes, cd,arrows.meta, fit,quotes}

\tikzset{
   tick/.style={postaction={
      decorate,
      decoration={markings, mark=at position 0.5 with {\draw[-] (0,.4ex) -- (0,-.4ex);}}}
   }
}
\tikzstyle{block} = [draw, rectangle, minimum height=2em, minimum width=3em,font=\bfseries,rounded corners,thick]
\tikzstyle{block} = [draw, rectangle, minimum height=2em, minimum width=3em]
\tikzstyle{block1} = [draw, rectangle, minimum height=1.5em, minimum width=2.5em]
\tikzstyle{blockDyn} = [draw, rectangle, minimum height=2.5em, minimum width=3.5em, align=center, inner sep=10pt, thick, fill=white, copy shadow={draw=black,fill=black,opacity=1,shadow xshift=0.5ex,shadow yshift=-0.5ex}]
\tikzstyle{blockAlg} = [draw, rectangle, minimum height=1.5em, minimum width=2.5em, align=center, inner sep=10pt, thick]
\tikzstyle{sum} = [draw,circle]

\tikzstyle{nodePre} = [circle, draw,inner sep=1pt,node contents={$\preceq$},thick]
\tikzstyle{nodePreEmpty} = [circle, draw,inner sep=1pt,thick]
\tikzstyle{nodePos} = [circle, draw,inner sep=1pt,node contents={$\posceq$},thick]
\tikzstyle{nodeProd} = [rectangle, draw,inner sep=4pt,node contents={$\times$},rounded corners,thick]
\tikzstyle{nodeSum} = [rectangle, draw,inner sep=4pt,node contents={$\mathbf{+}$},rounded corners,thick]

\definecolor{red}{rgb}{0.75, 0.0, 0.0}

\tikzset{fcname/.store in =\fcname, fcname={}}
\tikzset{funame/.store in =\funame, funame={}}
\tikzset{rcname/.store in =\rcname, rcname={}}
\tikzset{runame/.store in =\runame, runame={}}
\tikzset{whereres/.store in =\whereres, whereres=0.5}
\tikzset{wherefun/.store in =\wherefun, wherefun=0.5}
\tikzset{relres/.store in =\relres, relres={above}}
\tikzset{relfun/.store in =\relfun, relfun={above}}
\tikzset{posres/.store in =\posres, posres=1}
\tikzset{posfun/.store in =\posfun, posfun=1}
\tikzset{loos/.store in =\loos, loos=2}
\tikzset{feedback/.store in =\feedback, feedback=0}
\tikzset{
   DP/.style={
      label/.style={
         font=\everymath\expandafter{\the\everymath\scriptstyle},
         inner sep=5pt,
         node distance=2pt and -2pt},
      semithick,
      node distance=1 and 1,
      rconn/.style={color=white,opacity=0.0,postaction={decorate}, shorten <=3.2pt, shorten >= 0.8,
      decoration={markings, 
      mark= at position 0 with {
               \coordinate (a);
      },
      mark=at position .5 with
      {
              \ifthenelse{\equal{\feedback}{1}}{\def\angleOut{90}\def\angleIn{90}}{\def\angleOut{0}\def\angleIn{180}}    
              \coordinate (b);
              \draw[dashed,dpred,opacity=1.0] (a) to[out=\angleOut,in=\angleIn,looseness=\loos] 
              node[pos=\posres,\relres=\whereres mm,dpred,opacity=1,fill=white,inner sep=1pt,outer sep=1pt]{\footnotesize{\rcname}} (b);
      },
      mark= at position 1 with 
      {
             \ifthenelse{\equal{\feedback}{1}}{\def\angleOut{0}\def\angleIn{0}}{\def\angleOut{180}\def\angleIn{0}} 
              \ifthenelse{\equal{\feedback}{1}}{\def\symbol{\succeq}}{\def\symbol{\preceq}} 
              \coordinate (c);
              \draw[dpgreen,opacity=1.0] (c) to[out=\angleOut,in=\angleIn,looseness=\loos]
              node[pos=\posfun,\relfun=\wherefun mm,dpgreen,opacity=1,fill=white,inner sep=1pt,outer sep=1pt]{\footnotesize{\fcname}} (b){}; 
              \node[draw,circle,inner sep=0.5pt,color=black,fill=white,opacity=1.0] at (b) (nodepreceq) {$\symbol$}; 
      }
      }},
      runconn/.style={color=dpred,dashed,postaction={decorate},
      decoration={markings,
      mark= at position 1 with {
              \coordinate (a);
              \draw[dpred,opacity=1.0,dashed] ($(a)+(0.05,0)$) --++ (0.5,0) node[\relres,pos=\posres]{\footnotesize{\runame}};}
      }
      },
      funconn/.style={color=white,postaction={decorate},
      decoration={markings,
      mark= at position 0 with {
      \coordinate (a);
      \draw[dpgreen] ($(a)+(-0.05,0)$) -- ($(a)+(-0.5,0)$) node[\relfun, pos=\posfun]{\footnotesize{\funame}};}
      }
      },
      execute at begin picture={\tikzset{
         x=\dpx, y=\dpy,
         every fit/.style={inner xsep=\dpx, inner ysep=\dpy}}}
      },
   dpx/.store in=\dpx,
   dpx = 1.5cm,
   dpy/.store in=\dpy,
   dpy = 1.5ex,
   dp port sep/.store in=\dpportsep,
   dp port sep=2,
   dp port length/.store in=\dpportlen,
   dp port length=4pt,
   dp min width/.store in=\dpminwidth,
   dp min width=0.5cm,
   dp rounded corners/.store in=\dpcorners,
   dp rounded corners=2pt,
   dp small/.style={dp port sep=1, dp port length=2.5pt, dpx=.4cm, dp min width=.4cm, dpy=.7ex},
   dp/.code 2 args={
      \pgfmathsetlengthmacro{\dpheight}{\dpportsep * (max(#1,#2)) * \dpy}
      \pgfkeysalso{draw,%
        minimum width=\dpminwidth,%
        minimum height=\dpheight,%
        font=\bfseries,
        outer sep=0pt,%
        inner sep=5pt,%
        rounded corners=\dpcorners,
        thick,
        prefix after command={\pgfextra{\let\fixname\tikzlastnode}},
        append after command={\pgfextra{\draw
            \ifnum #1=0{} \else foreach \i in {1,...,#1} { 
            ($(\fixname.north west)!{\i/(#1+1)}!(\fixname.south west)$) +(0,0) node[solid,left,circle,color=dpgreen,draw,fill=dpgreen,scale=0.3] {} coordinate (\fixname_fun\i) -- +(0,0) coordinate (\fixname_fun\i')}\fi 
            \ifnum #2=0{} \else foreach \i in {1,...,#2} {
            ($(\fixname.north east)!{\i/(#2+1)}!(\fixname.south east)$) +(0,0) coordinate (\fixname_res\i') -- +(0,0) node[solid,right,circle,color=dpred,draw,fill=dpred,scale=0.3] {} coordinate (\fixname_res\i)}\fi;
         }}}
         },
      dp name/.style={append after command={\pgfextra{\node[label=center,inner sep=2pt,fill=white] at (\fixname) {\textbf{#1}};}}}
   }
\newcommand{\newacronymwithcmds}[4]{%
  \newacronym{abk:#1}{#3}{#4}%
  \expandafter\newcommand\csname gls#2\endcsname{\gls{abk:#1}\xspace}
  \expandafter\newcommand\csname Gls#2\endcsname{\Gls{abk:#1}\xspace}
  \expandafter\newcommand\csname glspl#2\endcsname{\glspl{abk:#1}\xspace}
  \expandafter\newcommand\csname Glspl#2\endcsname{\Glspl{abk:#1}\xspace}
}

\newcommand{\glsdefhere}[2]{%
  \glsreset{abk:#2}%
  \hypertarget{glo:abk:#2}{}%
  #1{abk:#2}%
  \glsunset{abk:#2}%
}

\newacronymwithcmds{poset}{poset}{poset}{partially ordered set}

\newacronymwithcmds{cpo}{cpo}{CPO}{complete partial order}
\newacronymwithcmds{dcpo}{dcpo}{DCPO}{directed complete partial order}

\newacronymwithcmds{dp}{dp}{DP}{design problem}
\newacronymwithcmds{cps}{cps}{CPS}{cyber-physical systems}

\newacronymwithcmds{cdp}{cdp}{CDP}{co-design problem}

\newacronymwithcmds{cdpi}{cdpi}{CDPI}{co-design problem with implementation}

\newacronymwithcmds{dpi}{dpi}{DPI}{design problem with implementation}
\newacronymwithcmds{mdpi}{mdpi}{MDPI}{monotone design problem with implementation}
\newacronymwithcmds{mcdp}{mcdp}{MCDP}{Monotone Co-Design Problem}

\newacronymwithcmds{qms}{qms}{QMS}{Quasi Measurable Space}
\newacronymwithcmds{qus}{qus}{QUS}{Quasi Universal Space}

\newacronymwithcmds{ofs}{ofs}{OFS}{outcome feasible set}

\newacronymwithcmds{MarkovKer}{MarkovKer}{Markov kernel}{Markov kernel}

\newacronymwithcmds{mcmc}{mcmc}{MCMC}{Markov chain Monte-Carlo}

\newacronymwithcmds{uav}{uav}{UAV}{unmanned aerial vehicle}
\newacronymwithcmds{vi}{vi}{VI}{variational inference}

\newacronymwithcmds{agc}{agc}{A-G contract}{Assume-Guarantee contract}

\newboolean{proofinappendix}

\newcommand{\Pos}{\mathsf{Pos}}

\newcommand{\QMS}{\mathsf{QMS}}
\newcommand{\QUS}{\mathsf{QUS}}

\newcommand{\qusOf}[2]{\QUS\{#1, #2\}}

\newcommand{\Nats}{\mathbb{N}}
\newcommand{\Reals}{\mathbb{R}}
\newcommand{\RealsNonNeg}{\Reals_{\geq 0}}
\newcommand{\setOfBool}{\mathrm{Bool}}

\newcommand{\realelx}{x}
\newcommand{\boolelb}{b}

\newcommand{\realize}{\mathsf{realize}}

\definecolor{dpred}{rgb}{0.7, 0.0, 0.0}
\definecolor{dpgreen}{rgb}{0.0, 0.5, 0.0}
\definecolor{functorpurple}{rgb}{0.5, 0, 0.5}
\definecolor{imporange}{rgb}{1.0, 0.58, 0.063}
\definecolor{specificationcolor}{rgb}{0.258, 0.527, 0.957}

\newcommand{\imporangetext}[1]{\textcolor{imporange}{#1}}
\newcommand{\specificationtext}[1]{\textcolor{specificationcolor}{#1}}

\newcommand{\F}[1]{\textcolor{dpgreen}{#1}}
\newcommand{\FI}[1]{\F{\textit{#1}}}
\newcommand{\funPosetF}{{\F{F}}}

\newcommand{\R}[1]{\textcolor{dpred}{#1}}
\newcommand{\RI}[1]{\R{\textit{#1}}}
\newcommand{\resPosetR}{{\R{R}}}

\newcommand{\I}[1]{\imporangetext{#1}}
\newcommand{\II}[1]{\I{\textit{#1}}}
\newcommand{\impSetI}{\I{I}}

\newcommand{\impi}{{\I{i}}}

\newcommand{\Sp}[1]{\specificationtext{#1}}
\newcommand{\SpI}[1]{\Sp{\textit{#1}}}
\newcommand{\specSetS}{\Sp{S}}
\newcommand{\specs}{\Sp{s}}

\newcommand{\probquery}{\text{pr}}
\newcommand{\fixfunminres}{\text{Fix\F{Fun}Min\R{Res}}\xspace}

\ifPDFTeX

\else

\fi

\newcommand{\sigAlg}{\sigma}
\newcommand{\sigAlgOf}[1]{\sigAlg(#1)}

\newcommand{\defeq}{\mathrel{\raisebox{-0.3ex}{$\overset{\text{\tiny def}}{=}$}}}

\newcommand{\setWithArg}[2]{\{#1 \mid #2\}}
\newcommand{\setWithIndex}[2]{\{#1\}_{#2}}

\newcommand{\USet}{\mathtt{U}}
\newcommand{\USetOf}[1]{\USet(#1)}
\newcommand{\upperClosure}[1]{\mathord{\uparrow}#1}
\newcommand{\upperOf}[1]{\upperClosure{\set{#1}}}

\newcommand{\lowerClosure}[1]{\mathord{\downarrow}#1}
\newcommand{\lowerOf}[1]{\lowerClosure{\set{#1}}}

\newcommand{\lowerBoundfix}{\mathrm{L}}
\newcommand{\upperBoundfix}{\mathrm{U}}

\newcommand{\DP}{\mathsf{DP}}
\newcommand{\dpOf}[2]{\DP\{#1, #2\}}
\newcommand{\qusdpOf}[2]{\DP_{\text{A}}\{#1, #2\}}
\newcommand{\dprb}{{\mathrm{dp}}}
\newcommand{\dprbOf}[1]{\dprb_#1}
\newcommand{\dprba}{\dprbOf{a}}
\newcommand{\dprbb}{\dprbOf{b}}
\newcommand{\dprbL}{\dprbOf{\lowerBoundfix}}
\newcommand{\dprbU}{\dprbOf{\upperBoundfix}}

\newcommand{\setproduct}{\times}
\newcommand{\posetproduct}{\times}

\newcommand{\mapFollowing}{\mathbin{\circ}}

\newcommand{\mthen}{\fatsemi}
\newcommand{\mthenMathbin}{\mathbin{\mthen}}
\newcommand{\stack}{\otimes}
\newcommand{\stackMathbin}{\mathbin{\stack}}
\newcommand{\trace}{\text{Tr}}
\newcommand{\traceOf}[1]{\trace(#1)}

\newcommand{\union}{\vee}
\newcommand{\intersection}{\wedge}

\newcommand{\IntervalSty}[1]{{#1}_{\Interval}}

\newcommand{\MarkovArr}{\Rightarrow}
\newcommand{\MarkovArrOf}[1]{\xRightarrow{#1}}

\newcommand{\mapArr}{\to}
\newcommand{\mapArrOf}[1]{\xrightarrow{#1}}

\newcommand{\partialmapArr}{\rightharpoonup}
\newcommand{\partialmapArrOf}[1]{\xrightharpoonup{#1}}
\newcommand{\domOf}[2]{\left. #1 \right|_#2}

\newcommand{\op}{{\mathrm{op}}}
\newcommand{\inv}{{-1}}
\newcommand{\measurePushForward}[2]{#1{}_*#2}

\newcommand{\interval}[2]{[#1, #2]}

\newcommand{\subsetXP}{\subsetX_\posetP}

\newcommand{\Interval}{\mathcal{I}}

\newcommand{\QUSDistri}{\mathcal{P}}
\newcommand{\QUSBorel}{\mathcal{B}}

\newcommand{\IntervalOf}[1]{\Interval(#1)}

\newcommand{\QUSDistriOf}[1]{\QUSDistri(#1)}

\newcommand{\QUSDistriOfDP}[2]{\QUSDistriOf{\qusdpOf{#1}{#2}}}

\newcommand{\QUSMarkovFollowing}{\mathbin{\kernelFollowing_\QUSDistri}}

\newcommand{\feasibleSetF}{\SetF}
\newcommand{\feasibleSetFOf}[1]{\feasibleSetF_{#1}}
\newcommand{\qusObjDP}[3]{\tup{\qusdpOf{#1}{#2}, \qusSetExpInDP{#1}{#2}{#3}}}

\newcommand{\emptySet}{\varnothing}
\newcommand{\singletonEl}{\star}
\newcommand{\singletonSet}{\set{\singletonEl}}

\newcommand{\SetA}{A}
\newcommand{\SetB}{B}
\newcommand{\SetC}{C}
\newcommand{\SetD}{D}

\newcommand{\SetF}{F}
\newcommand{\SetI}{I}
\newcommand{\SetM}{M}

\newcommand{\SetO}{O}

\newcommand{\setelmA}{m_\SetA}
\newcommand{\setelmB}{m_\SetB}
\newcommand{\setelmC}{m_\SetC}

\newcommand{\mapf}{f}
\newcommand{\mapg}{g}
\newcommand{\maph}{h}
\newcommand{\mapr}{r}
\newcommand{\mapq}{q}
\newcommand{\mapVarphi}{\varphi}
\newcommand{\mapVarphitilde}{\tilde{\mapVarphi}}

\newcommand{\mapGraph}{\Gamma}
\newcommand{\mapGraphOf}[1]{\mapGraph_{#1}}

\newcommand{\SetExp}[2]{{#1}^{#2}}

\newcommand{\qusSetExpInDP}[3]{\SetExp{\qusdpOf{#1}{#2}}{#3}}

\newcommand{\qusDistriProduct}{\mathbin{\otimes_\QUSDistri}}

\newcommand{\ev}{\text{ev}}
\newcommand{\evOf}[1]{\ev_{#1}}
\newcommand{\evOfFunRes}[2]{\evOf{#1,#2}}

\newcommand{\confidenceInterval}{\text{conf}}

\newcommand{\idmap}{\text{id}}

\newcommand{\const}{\text{const}}

\newcommand{\constinj}{\rebuttal{\text{cInj}}}

\newcommand{\proj}{\text{proj}}
\newcommand{\perm}{\text{perm}}

\newcommand{\alignedArrowDef}[7]{
\begin{alignedat}{2}
    #1 \colon #2 & #3 && #4 \\
    #5 & #6 && #7
\end{alignedat}
}
\newcommand{\defmap}[5]{\alignedArrowDef{#1}{#2}{\to}{#3}{#4}{\mapsto}{#5}}
\newcommand{\alignedArrowDefOneLine}[7]{
\begin{alignedat}{2}
    #1 \colon #2 & #3 && #4, \,
    #5 & #6 && #7
\end{alignedat}
}
\newcommand{\defmapOneLine}[5]{\alignedArrowDefOneLine{#1}{#2}{\to}{#3}{#4}{{\mapsto}}{#5}}

\newcommand{\tup}[1]{\langle #1 \rangle}
\newcommand{\set}[1]{ \left\{ #1 \right\} }

\newcommand{\SigAlg}{\Sigma}
\newcommand{\SigAlgOf}[1]{\SigAlg_{#1}}
\newcommand{\SigAlgA}{\SigAlgOf{\SetA}}
\newcommand{\SigAlgB}{\SigAlgOf{\SetB}}

\newcommand{\measureSpaceA}{\tup{\SetA, \SigAlgA}}
\newcommand{\measureSpaceB}{\tup{\SetB, \SigAlgB}}

\newcommand{\completionSub}{\mathcal{G}}

\newcommand{\measureProduct}{\times}

\newcommand{\opensetsO}{\mathcal{O}}

\newcommand{\sampleSpaceOmega}{\Omega}
\newcommand{\outcomeomega}{\omega}

\newcommand{\randomVarAlpha}{\alpha}
\newcommand{\randomVarAlphaa}{\randomVarAlpha_a}
\newcommand{\randomVarAlphab}{\randomVarAlpha_b}
\newcommand{\randomVarBeta}{\beta}
\newcommand{\randomVarKappa}{\kappa}

\newcommand{\subsetX}{X}
\newcommand{\subsetY}{Y}
\newcommand{\subsetZ}{Z}
\newcommand{\subsetXA}{\subsetX_\SetA}
\newcommand{\subsetYB}{\subsetY_\SetB}
\newcommand{\subsetZC}{\subsetZ_\SetC}

\newcommand{\distmu}{\mu}
\newcommand{\distkappa}{\kappa}

\newcommand{\measureKernela}{a}
\newcommand{\measureKernelb}{b}

\newcommand{\measureKernelf}{f}

\newcommand{\measureKernelObs}{{\mathrm{obs}}}
\newcommand{\measureKernelPolicy}{\pi}
\newcommand{\measureKernelApply}[2]{\rebuttal{(#2, #1)}} 

\newcommand{\kernelFollowing}{\mathbin{\circ}}

\newcommand{\True}{\top}
\newcommand{\False}{\bot}

\newcommand{\posetP}{P}
\newcommand{\posetQ}{Q}
\newcommand{\posetR}{R}

\newcommand{\posetleq}{\preceq}
\newcommand{\posetgeq}{\succeq}
\newcommand{\posetleqOf}[1]{\posetleq_{#1}}

\newcommand{\poselxF}{x_{\funPosetF}}
\newcommand{\poselxFindex}[1]{x_{\funPosetF, #1}}

\newcommand{\poselxR}{x_{\resPosetR}}
\newcommand{\poselxRindex}[1]{x_{\resPosetR, #1}}

\newcommand{\poselxP}{x_{\posetP}}
\newcommand{\poselyP}{y_{\posetP}}
\newcommand{\poselxQ}{x_{\posetQ}}
\newcommand{\poselyQ}{y_{\posetQ}}
\newcommand{\poselxRnoRes}{x_{\posetR}}
\newcommand{\poselx}{x}

\newcommand{\subsetAfunF}{A_{\funPosetF}}

\definecolor{baiocchi}{RGB}{193,221,245}

\newcommand{\measureKernelUAV}{a_\text{UAV}}

\newcommand{\actuatorset}{\mathcal{A}}
\newcommand{\batteryset}{\mathcal{B}}

\newcommand{\Task}{T}

\newcommand{\Actuation}{A}
\newcommand{\Battery}{B}
\newcommand{\actuator}{a}
\newcommand{\actuatorOne}{{a_1}}
\newcommand{\actuatorTwo}{{a_2}}
\newcommand{\actuatorThree}{{a_3}}

\newcommand{\lift}{L}

\newcommand{\powerp}{P}

\newcommand{\rebuttal}[1]{{\color{blue}#1}}
\newcommand{\CTremark}{(CT)\xspace}

\renewcommand{\rebuttal}[1]{#1}

\renewcommand{\baselinestretch}{1}

\setboolean{proofinappendix}{true}
\renewcommand\footnotemark{}

\title{Distributional Uncertainty and Adaptive Decision-Making in System Co-design}

\author{Yujun Huang, Gioele Zardini}

\date{Laboratory for Information and Decision Systems\\ Massachusetts Institute of Technology}

\begin{document}

\title{Distributional Uncertainty and Adaptive Decision-Making in System Co-design}

\author{Yujun Huang, Gioele Zardini
\thanks{The authors are with the Laboratory for Information \& Decision Systems, 
    Massachusetts Institute of Technology, Cambridge, MA 02139 {\tt \{yujun233,gzardini\}@mit.edu}}
    \thanks{We thank M. Furter for introducing \glsxtrlongpl{abk:qms} and M. Alharbi for discussions.}
}

\maketitle

\begin{abstract}
Complex engineered systems require coordinated design choices across heterogeneous components under conflicting objectives and uncertain specifications.
\rebuttal{Monotone co-design provides a compositional framework for such problems.
Performance of each subsystem is modeled with a design problem: a relation specifying what resources suffice to provide each functionality.}
Existing uncertain co-design models rely on interval bounds, which support worst-case reasoning but cannot represent probabilistic risk or multi-stage adaptive decisions.
We develop a distributional extension of co-design that models uncertain design outcomes as distributions over design problems and supports adaptive decision processes through Markov-kernel re-parameterizations.
Using quasi-measurable and quasi-universal spaces, we show that the standard co-design compositions remain compositional under this richer uncertainty, and introduce queries and observations extracting probabilistic trade-offs, including feasibility probabilities, confidence bounds, and distributions of minimal required resources.
A task-driven \glsentrylong{abk:uav} case study shows how the framework captures risk-sensitive and information-dependent design choices that interval models cannot express.
\end{abstract}

\section{Introduction}\label{sec:introduction}
\rebuttal{Embodied intelligence and \glscps tightly couple heterogeneous hardware (sensors, actuators) with software (perception, planning) and must be evaluated against multiple, often incomparable, metrics, so designers face nontrivial \emph{trade-offs} across components, subsystems, and system-level objectives while coordinating stakeholders with diverse expertise.
For safety- and mission-critical applications, uncertainty cannot be treated purely adversarially: designers must quantify \emph{risk} (e.g., probability of meeting specifications) and reason about how robustness trades against performance.
Design is moreover rarely one-shot, as decisions are staged while information arrives, so later choices should be \emph{adaptive} to earlier commitments and intermediate observations.}

Monotone co-design provides a compositional framework for such design~\cite{censi2019,zardiniCoDesignComplexSystems2023,censi2022}.
\rebuttal{Performance of each component is modeled as a \gls{abk:dp}: a relation specifying which resources suffice to provide a functionality, both forming \glsxtrlongpl{abk:poset}.}
System performance is built by interconnecting \glspl{abk:dp} in series, parallel, and feedback, with traced symmetric monoidal structure~\cite{zardiniCoDesignComplexSystems2023}, enabling joint optimization of hardware\slash software architectures in robotics and control~\cite{zardiniecc21,zardiniTaskdrivenModularCodesign2022,milojevic2025codei}, transportation~\cite{zardini2022co}, and automotive~\cite{neumann2024co}.

Existing co-design tools, however, treat uncertainty via interval bounds on feasible sets~\cite{censi2017uncertainty, zardiniCoDesignComplexSystems2023}.
\rebuttal{Intervals support worst-case robustness but are too coarse for probabilistic risk (chance constraints, confidence levels, quantiles) or multi-stage adaptive policies: they cannot separate ``almost-always safe'' designs from uniformly conservative ones, nor express strategies reacting to intermediate observations.}

\rebuttal{This paper extends monotone co-design with \emph{distributional} uncertainty, thereby also enabling \emph{adaptive} decisions.}
We model uncertain \rebuttal{design} outcomes via probability measures (and parametric families) over spaces of \glspl{abk:dp}, \rebuttal{with quasi-measurable structures avoiding} measurability pathologies while preserving compositionality.
\rebuttal{Here \emph{distributional} means uncertainty is modeled with distributions supplied to the framework, rather than robust sets or intervals; the same term is sometimes used for ambiguity sets of distributions \cite{duttaDistributionalRobustVerification25}, which is not this paper's topic though we note it as a future direction.
This enables (i) distributionally uncertain trade-offs (\glsentrylongpl{abk:dp}) as design results, (ii) multi-stage adaptive design policies conditioning later decisions and observations on earlier realized outcomes, and (iii) queries returning probabilistic trade-offs, uncertain maps, and risk metrics (e.g., expected resource use, quantiles) rather than only deterministic feasible sets, all while preserving measurability.}
This is the first framework that simultaneously handles general distributional uncertainty over compositional design models, supports multi-stage adaptivity, and remains closed under the standard co-design compositions.

\subsection{Related work}\label{subsec:related-work}
\rebuttal{This work sits at the intersection of system-level design for \glscps, compositional specification formalisms, and robust and stochastic optimization.
The recurring gap: rich probabilistic and adaptive uncertainty models lack a modular, multiobjective language for heterogeneous subsystems; while compositional frameworks use descriptions too coarse for quantitative risk or staged policies.}

\subsubsection{System-level design in robotics and CPS}
\rebuttal{Co-optimizing hardware and software at the system level is long recognized as central to robotics and \glscps~\cite{alur2015principles,zardiniCoDesignComplexSystems2023,grosjeanFrameworkCommunicationCompute2025}, including multiobjective formulations for robot system design~\cite{merletOptimalDesignRobots2005} and cross-layer CPS methodologies spanning sensing, computation, and control~\cite{seshia2016design,bradleyOptimizationControlCyberPhysical2015}.}
Recent work also studies cost--performance trade-offs in probabilistic planning~\cite{saberifarChartingTradeoffDesign2022}.
\rebuttal{These efforts target specific problem classes; a framework combining heterogeneous components, multiobjective trade-offs, and distributional uncertainty is missing~\cite{LeeCPSfocusOnModels}.}

\subsubsection{Contract-based and compositional design}
\rebuttal{Assume--guarantee contracts enable modular reasoning by associating components with environment assumptions and behavioral guarantees~\cite{incerAlgebraContracts2022}, with applications spanning circuits and autonomous systems~\cite{benvenisteContractsSystemDesign2018,yuContractComponentSelectionBehavior}.}
They naturally support open-system composition~\cite{abadiComposingSpecifications1993}, but require substantial adaptation when subsystems interact through explicit interfaces~\cite{incerAlgebraContracts2022,PactiAssumeGuaranteeContracts2025}.
\rebuttal{Contracts moreover emphasize satisfying specifications rather than trade-offs, and optimization is typically limited to the weakest component contract for a given environment~\cite{incerAlgebraContracts2022}. \cite{xiaoEfficientExplorationCyberPhysical2024,xiaoContractBasedArchitectureExploration2026} combine mixed-integer programming with contracts to minimize cost subject to specifications but remain single-objective, and probabilistic extensions target satisfaction probabilities rather than compositional optimization over objectives and risk~\cite{deglurkarSystemLevelAnalysisModule2024}.}

\subsubsection{Robust, stochastic, and adaptive optimization}
\rebuttal{
Robust and stochastic optimization offer mature tools for uncertainty, including ambiguity sets, distributional formulations, and chance constraints~\cite{martiStochasticOptimizationMethods2024,bertsimasRobustAdaptiveOptimization2022,shapiroLecturesStochasticProgramming2021}, though many target a single scalar metric (e.g., expected cost~\cite{martiStochasticOptimizationMethods2024,dommelFoundationsMultistageStochastic2021}) rather than trade-offs across interconnected subsystems.
Chance constraints are addressed by discretization~\cite{shenApproximateMethodsSolving2024} and gradient methods~\cite{vanackooijGeneralizedGradientsProbabilistic2020a,bertholdAlgorithmicSolutionOptimization2022}, where randomized policies become essential~\cite{shenChanceConstrainedProbability2023}.
Multi-stage stochastic programming models \emph{here-and-now} versus \emph{wait-and-see} decisions via policies over evolving uncertainty~\cite{pflugMultistageStochasticOptimization2014,bertsimasRobustAdaptiveOptimization2022}; general formulations exist~\cite{dommelFoundationsMultistageStochastic2021,pflugMultistageStochasticOptimization2014}, but implementations rely on scenario trees~\cite{kayacikAdaptiveMultistageStochastic2024,kayacikPartiallyAdaptiveMultistage2025}, restrictive under continuous uncertainty and commonly abstract away how observations depend on prior design choices~\cite{martiStochasticOptimizationMethods2024,shapiroLecturesStochasticProgramming2021}.
We import these distributional and adaptive ideas into co-design.
}

\subsubsection{Monotone co-design and uncertainty}
\rebuttal{Monotone co-design models components as monotone relations between functionalities and resources, composed in series\slash parallel\slash feedback while preserving order and enabling optimization~\cite{zardiniCoDesignComplexSystems2023}
It is instantiated across autonomous systems, control, and beyond~\cite{zardiniecc21,zardiniTaskdrivenModularCodesign2022,milojevic2025codei,zardini2022co,neumann2024co}.
Uncertainty there is treated by interval models guaranteeing conservative feasibility~\cite{censi2017uncertainty}, which encode neither probabilistic risk nor adaptive policies.}

\subsection{Statement of Contribution}
\rebuttal{Our contributions are fourfold.
First, we introduce probability measures and parametric uncertainty over \glspl{abk:dp} using quasi-measurable spaces, a distributional uncertainty model closed under co-design compositions.
Second, we formalize multi-stage design processes whose implementations and specifications depend on intermediate observations and earlier choices, generalizing interval uncertainty compositionally.
Third, we define query operators from distributional uncertain design results to probabilistic performance characterizations, enabling risk-aware design.
Finally, we illustrate the framework on \gls{abk:uav} co-design with uncertain task profiles and specifications, revealing trade-offs that intervals cannot capture.}

\subsection{Structure of the paper}
\rebuttal{\Cref{subsec:math-preliminary,sec:background-co-design} review mathematical preliminaries, monotone co-design, and interval uncertainty
They are existing materials, except the formalization of \I{implementations} (design choices), adapted from the literature to better resemble the new constructions.
\Cref{sec:distributional-uncertainty-co-design} develops the distributional and adaptive extension (distributions over \glspl{abk:dp}, implementations\slash specifications, queries\slash observations) using \glsxtrlongpl{abk:qus}; within it we review \glsxtrlongpl{abk:qms}\slash \glsxtrlongpl{abk:qus} from~\cite{heunenConvenientCategoryHigherOrder2017,forreQuasiMeasurableSpaces2021}, while everything from \cref{def:qus-of-dp} onwards is new.
\Cref{sec:numerical-example} presents the \gls{abk:uav} case study.
Every imported definition and result carries a citation on its statement.}

\section{Mathematical preliminaries}\label{subsec:math-preliminary}

\rebuttal{
Some results in co-design and \glsxtrlongpl{abk:qus} are related to \emph{category theory}, but it is \emph{not} prerequisite here: every definition, statement, and proof in the main text uses ordinary set- and measure-theoretic language, and categorical observations appear only in remarks marked \CTremark, skippable without loss of continuity.
Standard references are~\cite{maclaneCategoriesWorkingMathematician1978,riehl2017category, fong2019}, and~\cite{censi2022} for an applied, engineering-oriented introduction; an extended version collects the categorical concepts in full~\cite{huang2026distributionalextended}.
}

\subsubsection{Sets and functions}\label{subsubsec:sets-functions}
\rebuttal{$\SetA \setproduct \SetB$ denotes the \emph{Cartesian product}, with elements the tuples $\tup{\setelmA, \setelmB}$, $\setelmA \in \SetA$, $\setelmB \in \SetB$.
Nested and unwrapped products are identified: $\SetA \setproduct (\SetB \setproduct \SetC)$ and $\SetA \setproduct \SetB \setproduct \SetC$, and element-wise $\tup{\setelmA, \tup{\setelmB, \setelmC}}$ and $\tup{\setelmA, \setelmB, \setelmC}$.}

\rebuttal{We write~$\mapf \colon \SetA \to \SetB$ for a function (or \emph{map}) with \emph{domain} $\SetA$ and \emph{co-domain} $\SetB$, and $\setelmA \mapsto \mapf(\setelmA)$ for its action on elements.
We denote the graph of $\mapf$ by $\mapGraphOf{\mapf} \defeq \set{\tup{\setelmA, \mapf(\setelmA)}} \subseteq \SetA \setproduct \SetB$, the image of $\subsetXA \subseteq \SetA$ by $\mapf(\subsetXA) \defeq \setWithArg{\setelmB \in \SetB}{\exists \setelmA \in \subsetXA, \mapf(\setelmA) = \setelmB}$, the pre-image of $\subsetYB \subseteq \SetB$ by $\mapf^\inv\left(\subsetYB\right) \defeq \setWithArg{\setelmA \in \SetA}{\mapf(\setelmA) \in \subsetYB}$, and the identity map by $\idmap_{\SetA} \colon \setelmA \mapsto \setelmA$.
The \emph{composite} of~$\mapf \colon \SetA \to \SetB$ and~$\mapg \colon \SetB \to \SetC$ is $\mapg \mapFollowing \mapf \colon \setelmA \mapsto \mapg(\mapf(\setelmA))$, drawn as~$\SetA \mapArrOf{\mapf} \SetB \mapArrOf{\mapg} \SetC$.
A \emph{partial map} $\mapf \colon \SetA \partialmapArr \SetB$ is a map $\domOf{\SetA}{\mapf} \mapArr \SetB$ defined on a subset $\domOf{\SetA}{\mapf} \subseteq \SetA$.
\emph{Projections} and \emph{permutations} are indicated by suffixes, e.g.~$\proj_{1,3} \colon \tup{\setelmA, \setelmB, \setelmC} \mapsto \tup{\setelmA, \setelmC}$ and~$\perm_{1 \leftrightarrow 2} \colon \tup{\setelmA, \setelmB, \setelmC} \mapsto \tup{\setelmB, \setelmA, \setelmC}$.}

\rebuttal{We distinguish the \emph{product} $\mapf \setproduct \mapg \colon \tup{\setelmA, \setelmA'} \mapsto \tup{\mapf(\setelmA), \mapg(\setelmA')}$ from the \emph{tuple} $\tup{\mapf, \mapg}\colon {\setelmA}\mapsto {\tup{\mapf(\setelmA), \mapg(\setelmA)}}$ of maps; both can be indexed, $\setproduct_{i \in I} \SetA_i$ and $\tup{\mapf_i}_{i \in I}$.
$\maph(-,\setelmB) \colon \setelmA \mapsto \maph(\setelmA, \setelmB)$ is the \emph{curry} of $\maph \colon \SetA \setproduct \SetB \to \SetC$.}

\subsubsection{Background on orders}\label{sec:app_order}

\rebuttal{
Orders on resources, functionalities, designs, etc., play a key role in co-design theory.

\begin{definition}[Poset and monotone maps]
A \emph{\glsdefhere{\gls}{poset}} is a tuple $\mathcal{P} =\tup{ P,\preceq_\mathcal{P}}$ with $P$ a set and~$\preceq_\mathcal{P}$ a partial order (reflexive, transitive, antisymmetric relation); when clear we write~$\posetP$ and~$\posetleq$ as the \glsposet and order.
A map $f\colon \posetP \to \posetQ$ is \emph{monotone} if $x\preceq_\mathcal{P} y\Rightarrow f(x) \preceq_\mathcal{Q} f(y)$.
\end{definition}

The \emph{opposite} of a \gls{abk:poset}~$\mathcal{P} = \tup{ P,\preceq_\mathcal{P}}$ is the poset $\mathcal{P}^\op \defeq\tup{ P,\posetleq_{\mathcal{P}}^\op }$ with the same elements and reversed ordering:
$
\poselxP \posetleq_{\mathcal{P}}^\op \poselyP \Leftrightarrow 
\poselyP \posetleq_{\mathcal{P}}\poselxP
$.
The \emph{product} of $\tup{P,\preceq_{\mathcal{P}}}$ and $\tup{Q,\preceq_{\mathcal{Q}}}$ is $\tup{P\times Q,\preceq_{\mathcal{P}\times \mathcal{Q}}}$, with
\begin{equation*}
    \tup{\poselxP,\poselxQ}\preceq_{\mathcal{P}\times \mathcal{Q}}\tup{\poselyP,\poselyQ} \Leftrightarrow (\poselxP \preceq_{\mathcal{P}} \poselyP) \wedge (\poselxQ \preceq_\mathcal{Q} \poselyQ).
\end{equation*}
Monotonicity is preserved by composition and products \cite{censi2022}, and intervals are $\interval{\poselx_{\posetP, \lowerBoundfix}}{\poselx_{\posetP, \upperBoundfix}} \defeq \setWithArg{\poselxP}{\poselx_{\posetP, \lowerBoundfix} \posetleq \poselxP \posetleq \poselx_{\posetP, \upperBoundfix}}$ for $\poselx_{\posetP, \lowerBoundfix} \posetleq \poselx_{\posetP, \upperBoundfix}$.

\begin{definition}[Upper closure and upper sets]
    \rebuttal{The \emph{upper closure} of $\subsetXP \subseteq \posetP$ is $\upperClosure{\subsetXP} \defeq \setWithArg{\poselxP \in \posetP}{\exists \poselyP \in \subsetXP : \poselyP \posetleq_\posetP \poselxP}$, and $\subsetXP$ is an \emph{upper set} if $\upperClosure{\subsetXP} = \subsetXP$.
    $\USetOf{\posetP}$ denotes its upper sets; \emph{lower closures} and \emph{lower sets} are dual.}
\end{definition}

}

\subsubsection{Measure theory and probability}\label{subsubsec:sigma-alg-intro}

We recall standard measure theory and probability\rebuttal{ \cite{bogachevMeasureTheory2007}}.
Additional structures (sigma algebras, topologies) are omitted when clear from context.
\rebuttal{

\begin{definition}[Measurable spaces and maps]
A \emph{sigma algebra}~$\SigAlgA$ on a set~$\SetA$ is a collection of subsets containing $A$ and closed under complements and countable unions; $\measureSpaceA$ is a \emph{measurable space} and sets in $\SigAlgA$ are \emph{measurable} (\emph{events}).
A map between measurable spaces is \emph{measurable} if pre-images of measurable sets are measurable.
\end{definition}

For any family of subsets $\mathcal{G}\subseteq 2^\SetA$, the sigma algebra \emph{generated} by $\mathcal{G}$, denoted~$\sigAlgOf{\mathcal{G}}$, is the smallest sigma algebra containing $\mathcal{G}$.
Completion maximally enlarges the events on which distributions evaluate, by adding subsets of events with zero probability.
}

\begin{definition}[Completion of a sigma algebra]
    Given a distribution $\distmu$ on measurable space $\tup{\SetA, \SigAlg}$, the Lebesgue completion $\SigAlg_\distmu$ is the smallest sigma algebra containing $\SigAlg$ with $\subsetY \in \SigAlg_\distmu$ whenever $\subsetY \subseteq \subsetX \in \SigAlg$ and $\distmu(\subsetX) = 0$.
    The universal completion is the intersection over all distributions \rebuttal{on} it:
    \begin{equation*}
        (\SigAlg)_\completionSub \defeq \cap \setWithArg{\SigAlg_\distmu}{\distmu \text{ is a Distribution on } \rebuttal{\tup{\SetA, \SigAlg}}}.
    \end{equation*}
\end{definition}

\rebuttal{
\emph{Markov kernels} come from categorical probability and support compositional, graphical languages~\cite{fritzSyntheticApproachMarkov2020}.

\begin{definition}[Markov kernels and their composition]\label{def:common-markov-kernel}
A \emph{Markov kernel} between measurable spaces, $\measureKernela \colon \SetA \MarkovArr \SetB$, is a map $\measureKernela \colon \SetA \setproduct \SigAlgOf{\SetB} \to \interval{0}{1}$ s.t.:
    \begin{enumerate}[nosep]
        \item $\measureKernela\measureKernelApply{-}{\setelmA}$ is a probability distribution on $\measureSpaceB$ for each $\setelmA \in \SetA$;
        \item $\measureKernela\measureKernelApply{\subsetYB}{-} \colon \SetA \to \interval{0}{1}$ is measurable for each $\subsetYB \in \SigAlgOf{\SetB}$.
    \end{enumerate}
    We write $\measureKernela(\setelmA)$ for the distribution$\measureKernela\measureKernelApply{-}{\setelmA}$.
    Kernels $\measureKernela \colon \SetA \MarkovArr \SetB$ and $\measureKernelb \colon \SetB \MarkovArr \SetC$ \emph{compose} into $\measureKernelb \kernelFollowing \measureKernela \colon \tup{\setelmA, \subsetZC} \mapsto \int_{\setelmB \in \SetB} \measureKernelb\measureKernelApply{\subsetZC}{\setelmB} \measureKernela\measureKernelApply{d \setelmB}{\setelmA}$, with diagrammatic notation $\SetA \MarkovArrOf{\measureKernela} \SetB \MarkovArrOf{\measureKernelb} \SetC$ in top-down and left-to-right form (\cref{fig:markov-kernel-diagrams}).
    A distribution~$\distmu$ on $\SetB$ is a kernel $\singletonSet \MarkovArrOf{\const_{\distmu}} \SetB$, with $\measureKernelb \kernelFollowing \distmu$ the pushforward.
\end{definition}
}

\begin{figure}[tb]
    \centering
    \newcommand{\subfigscale}{0.25}
    
    \begin{subfigure}{0.48\columnwidth}
        \centering
        \includegraphics[scale=\subfigscale,valign=c]{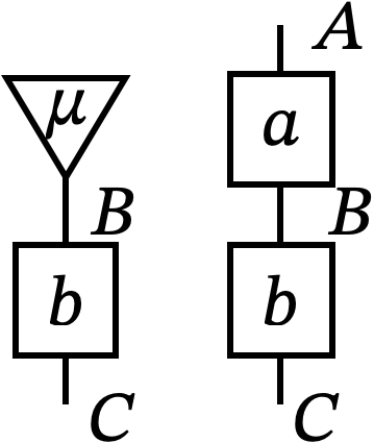}
        \subcaption{Top-down composition.}
    \end{subfigure}
    \begin{subfigure}{0.48\columnwidth}
        \centering
        \includegraphics[scale=\subfigscale,valign=c]{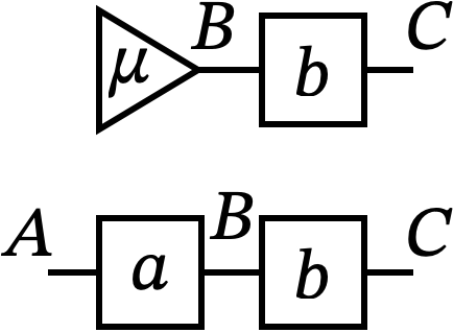}
        \subcaption{Left-to-right composition.}
        \begin{minipage}{.1cm}
        \vfill
        \end{minipage}
    \end{subfigure}

    \caption{Markov kernel diagrams $\measureKernelb \kernelFollowing \distmu$ and $\measureKernelb \kernelFollowing \measureKernela$ in two orientations.}
    \label{fig:markov-kernel-diagrams}
\end{figure}

Graphs, images, and pre-images of measurable maps require topological and descriptive-set structure.

\rebuttal{
\begin{definition}[Topological spaces, continuous maps, and Borel sigma algebras]
    A topological space is a tuple $\tup{\SetA, \opensetsO}$ with $\opensetsO$ the open sets of $\SetA$, containing the empty set and closed under arbitrary unions and finite intersections; \emph{neighborhoods} of $\setelmA \in \SetA$ are open sets containing it.
    A map between topological spaces is \emph{continuous} if pre-images of open sets are open.
    The Borel sigma algebra $\SigAlg_{\opensetsO}$, with its elements called Borel sets, is the smallest sigma algebra containing $\opensetsO$.
\end{definition}

$\setOfBool$ is the two-element set of logic true ($\True$) and false ($\False$) with the discrete topology.
\emph{Homeomorphisms} are continuous bijections whose domain and co-domain are homeomorphic; homeomorphic spaces are topologically equivalent.
Sets in $(\SigAlg_{\opensetsO})_\completionSub$ are called \emph{universally measurable}.
}
\rebuttal{
\begin{definition}[Hausdorff, Polish, Souslin spaces and analytic sets]
    A Hausdorff space is a topological space in which distinct points have disjoint neighborhoods, and a Polish space one \rebuttal{homeomorphic} to a closed subset of $\Reals^\Nats$ (e.g.\ $\Reals$, $\Reals^n$).
    An analytic set is a continuous image of a Polish space into a Hausdorff space; a Souslin space is an analytic Hausdorff space.
\end{definition}
    
}

\rebuttal{
    Every Borel set is analytic and every analytic set is universally measurable; countable intersections and unions of analytic sets are analytic, though complements need not be, and every continuous map is Borel measurable \cite{bogachevMeasureTheory2007}.
}
\rebuttal{The next two theorems restate\rebuttal{ \cite[Lemma 6.7.1, Theorem 6.7.3]{bogachevMeasureTheory2007}}.}

\begin{theorem}\label{thm:borel-maps-graphs-Souslin}
    Let $\SetA, \SetB$ be Souslin.
    \rebuttal{The graph $\mapGraphOf{\mapf}$ of a Borel measurable $\mapf \colon \SetA \to \SetB$ is Borel, hence analytic; conversely, if $\mapGraphOf{\mapf}$ is analytic then $\mapf$ is Borel measurable.}
\end{theorem}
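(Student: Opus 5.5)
The plan is to prove the two directions separately. Both rest on the structure of Souslin spaces: they are Hausdorff, and their Borel sigma algebra is countably generated and point-separating. We work throughout in the product $\SetA \setproduct \SetB$, which is again Souslin.

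\emph{Forward direction.} Assume $\mapf$ is Borel measurable. Fix a countable family $\setWithIndex{U_n}{n \in \Nats}$ of Borel subsets of $\SetB$ that separates points. Such a family exists for Souslin spaces: take a continuous surjection from a Polish space and use the images of a countable base, or argue directly from the countable generation of Borel sets. Then
\begin{equation*}
\mapGraphOf{\mapf} = \bigcap_{n \in \Nats} \Big( \big(\mapf^\inv(U_n) \setproduct U_n\big) \cup \big((\SetA \setminus \mapf^\inv(U_n)) \setproduct (\SetB \setminus U_n)\big) \Big).
\end{equation*}
A pair $\tup{\setelmA, \setelmB}$ lies in the right-hand side iff $\mapf(\setelmA)$ and $\setelmB$ belong to exactly the same sets $U_n$, which holds iff $\setelmB = \mapf(\setelmA)$ by point separation. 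Each $\mapf^\inv(U_n)$ is Borel because $\mapf$ is Borel measurable. Hence each set in the intersection is a finite union of Borel rectangles, and the graph is Borel. Borel sets in a Souslin space are analytic (the remark preceding the theorem), so $\mapGraphOf{\mapf}$ is analytic.

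\emph{Converse.} Assume $\mapGraphOf{\mapf}$ is analytic and let $\subsetYB \subseteq \SetB$ be Borel. We have
\begin{equation*}
\mapf^\inv(\subsetYB) = \proj_1\big(\mapGraphOf{\mapf} \cap (\SetA \setproduct \subsetYB)\big),
\end{equation*}
and the same identity with $\SetB \setminus \subsetYB$ gives $\SetA \setminus \mapf^\inv(\subsetYB) = \proj_1\big(\mapGraphOf{\mapf} \cap (\SetA \setproduct (\SetB \setminus \subsetYB))\big)$. Each set $\mapGraphOf{\mapf} \cap (\SetA \setproduct \subsetYB)$ is the intersection of an analytic set with a Borel set, hence analytic. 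Continuous images of analytic sets are analytic, since we can compose the continuous surjections, so both projections are analytic in $\SetA$. Thus $\mapf^\inv(\subsetYB)$ is an analytic set whose complement is also analytic.

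The main obstacle is the last step: showing that an analytic set with analytic complement is Borel. This is the Souslin (Lusin) separation theorem. In a Hausdorff space, any two disjoint analytic sets can be separated by a Borel set. Applying it to $\mapf^\inv(\subsetYB)$ and its complement forces the separating Borel set to equal $\mapf^\inv(\subsetYB)$, so $\mapf$ is Borel measurable. The separation theorem itself would be proved by contradiction: assuming inseparability, use the Polish parametrizations of the two sets and refine through shrinking basic neighborhoods to obtain a common limit point, contradicting disjointness via the Hausdorff property. Since the paper imports this theorem from \cite{bogachevMeasureTheory2007}, I would cite the separation theorem rather than reprove it.
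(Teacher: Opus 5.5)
Your proof is correct and is the standard argument for the result the paper imports without proof from \cite[Lemma~6.7.1]{bogachevMeasureTheory2007}: for the forward direction, a countable point-separating family of Borel sets writes the graph as a countable intersection of finite unions of Borel rectangles; for the converse, the Souslin separation theorem is applied to $f^{-1}(Y)$ and its complement, both projections of analytic sets. One correction: the images of a countable base under a continuous surjection from a Polish space are in general only analytic, not Borel, so they cannot serve directly as the sets $U_n$. Instead, apply the separation theorem to each disjoint pair of such images to get separating Borel sets, or use your second suggestion, countable generation of the Borel $\sigma$-algebra of a Souslin space.
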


\begin{theorem}\label{thm:measurable-image-pre-image-Souslin}
    For Borel measurable $\mapf \colon \SetA \to \SetB$ between Souslin spaces, images and pre-images of analytic sets are analytic.
\end{theorem}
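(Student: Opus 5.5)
The plan is to establish the two assertions of \cref{thm:measurable-image-pre-image-Souslin} separately. In each case the goal is to express the set in question through the graph $\mapGraphOf{\mapf}$, products, and projections. The ingredients are \cref{thm:borel-maps-graphs-Souslin}, which makes $\mapGraphOf{\mapf}$ Borel and hence analytic in $\SetA \setproduct \SetB$, and the facts recalled just before it: analytic sets are closed under countable intersections, and continuous maps send analytic sets to analytic sets, since a continuous image of a continuous image of a Polish space is again such an image.

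\textbf{Images.} Fix an analytic $\subsetXA \subseteq \SetA$. Then
\begin{equation*}
\mapf(\subsetXA) = \proj_2\bigl(\mapGraphOf{\mapf} \cap (\subsetXA \setproduct \SetB)\bigr).
\end{equation*}
The steps are as follows.
\begin{enumerate}[nosep]
\item $\subsetXA \setproduct \SetB$ is analytic: if $\SetA$ and $\SetB$ are continuous images of Polish spaces $P_1, P_2$ with $\subsetXA = g(P)$, then $\subsetXA \setproduct \SetB$ is the continuous image of the Polish space $P \setproduct P_2$.
\item The intersection with $\mapGraphOf{\mapf}$ is analytic, because analytic sets are closed under countable intersections.
\item The projection $\proj_2$ is continuous, so the image of this analytic set is a continuous image of a Polish space. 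It lands in $\SetB$, which is Hausdorff, so it is analytic.
\end{enumerate}

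\textbf{Pre-images.} Fix an analytic $\subsetYB \subseteq \SetB$. Symmetrically,
\begin{equation*}
\mapf^\inv(\subsetYB) = \proj_1\bigl(\mapGraphOf{\mapf} \cap (\SetA \setproduct \subsetYB)\bigr),
\end{equation*}
and the same three steps apply verbatim. Note that this avoids complements entirely, which matters because analytic sets need not be closed under complementation.

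\textbf{Main obstacle.} The delicate point is the appeal to \cref{thm:borel-maps-graphs-Souslin}: I need the graph to be analytic in the product $\SetA \setproduct \SetB$ with the product topology. This requires two checks.
\begin{enumerate}[nosep]
\item The product of Souslin spaces is Souslin. It is Hausdorff, and it is the continuous image of a product of Polish spaces, which is again Polish.
\item The Borel sigma algebra of the product coincides with the product of the Borel sigma algebras, so that ``Borel graph'' is unambiguous.
\end{enumerate}
For the second check I would use that Souslin spaces are hereditarily Lindelöf, so every open set in the product is a countable union of open rectangles. Alternatively, the cited theorem can simply be taken as already delivering analyticity of $\mapGraphOf{\mapf}$ in $\SetA \setproduct \SetB$. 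In either case the remaining steps are routine.
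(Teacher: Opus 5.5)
Your proposal is correct. The paper gives no proof of its own, importing the statement from \cite[Theorem 6.7.3]{bogachevMeasureTheory2007}, and your argument (writing $\mapf(\subsetXA)$ and $\mapf^\inv(\subsetYB)$ as projections of $\mapGraphOf{\mapf} \cap (\subsetXA \setproduct \SetB)$ and $\mapGraphOf{\mapf} \cap (\SetA \setproduct \subsetYB)$) is the standard proof of that result; your second ``obstacle'' check is not needed, since \cref{thm:borel-maps-graphs-Souslin} already gives $\mapGraphOf{\mapf}$ analytic in $\SetA \setproduct \SetB$, and otherwise only the automatic inclusion of the product sigma algebra into the Borel sigma algebra of $\SetA \setproduct \SetB$ would be used.
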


\section{Background of monotone co-design}\label{sec:background-co-design}

\subsection{Monotone co-design theory}\label{subsec:co-design-intro}
\rebuttal{
Co-design compositionally models and analyzes complex engineering systems with heterogeneous components and tightly coupled design choices.
Its basic objects are \emph{\F{functionalities}} and \emph{\R{resources}}, each a \glsposet.
Notably, \F{functionalities} and \R{resources} are not properties of a \glsposet but \emph{roles} it plays at a component's interface: a component provides its functionalities while requiring its resources, and a \F{functionality} provided by one component can be the \R{resource} required by another (or by itself), as captured by the series and trace operations in \cref{def:dp-compositions}.
Order reflects quality: larger means better, i.e.\ harder to provide as a \F{functionality}, more desirable as a \R{resource}.
These need not be quantities that the designer controls (money, computation, etc.); they capture anything about performance, such as the weather or the task profile described in \cref{sec:numerical-example}.
}
As a running example, consider \gls{abk:uav} design, where perception is a key sub-system.
\rebuttal{Fixing all other subsystems, perception provides \FI{detection accuracy} at the cost of \RI{computation power} under given \RI{weather conditions}.
\RI{Detection accuracy} can be a \R{resource} required by the planner, and \FI{computation power} a \F{functionality} provided by the computer, forming two interconnections.
Accuracy and computation power are positive reals, while weather conditions form a poset with non-comparable elements:
a clear night and a foggy day pose qualitatively different challenges, and designs optimized for one need not perform well in the other.
We then define \glsentrylongpl{abk:dp} as descriptions of component performance relative to a given \F{functionality}\slash \R{resource} interface.
}

\begin{definition}[\glsdefhere{\Gls}{dp}]\label{def:dp}
    Given \glspl{abk:poset}~$\funPosetF$ and $\resPosetR$ of \F{functionalities} and \R{resources}, a \emph{\gls{abk:dp}} is an upper set of $\funPosetF^\op \posetproduct \resPosetR$.
    We denote the set of such \glspl{abk:dp} by $\dpOf{\funPosetF}{\resPosetR}$.
    Given a \gls{abk:dp} \rebuttal{denoted by} $\dprb$, a pair $\tup{\poselxF, \poselxR}$ of functionality $\poselxF$ and resource $\poselxR$ is \emph{feasible} if $\tup{\poselxF, \poselxR} \in \dprb$.
    \rebuttal{
    We order $\dpOf{\funPosetF}{\resPosetR}$ by inclusion: $\dprba \posetleq \dprbb \Leftrightarrow \dprba \subseteq \dprbb$.
    }
\end{definition}

\rebuttal{
The upper set condition captures a design intuition: if $\poselxR$ suffices to provide $\poselxF$ (i.e., $\tup{\poselxF,\poselxR} \in \dprb$), then a better resource~$\poselxR' \posetgeq \poselxR$ suffices for a worse functionality~$\poselxF' \posetleq \poselxF$.
Taking the opposite order on $\F{\funPosetF}$ but not on $\R{\resPosetR}$ captures the difference between the two interfaces: for \emph{this component} it is \emph{easier} to provide \emph{worse} \F{functionalities} with \emph{better} \R{resources}.
A \gls{abk:dp} therefore records \emph{feasibility}, not \emph{efficiency}: a design delivering \FI{\SI{2}{\kilogram}} of payload also appears as delivering \FI{\SI{1}{\kilogram}} at the same cost.
Optimality is recovered by infimals of query results (\cref{def:mdpi-queries}), not by inspection.
}

Systems performance is built from those of sub-systems as multi-graphs of \glspl{abk:dp}, called \emph{co-design problems}; the composition operations are in \cref{def:dp-compositions}, some shown in \cref{fig:dp_def}.

\begin{figure}[tb]

\begin{subfigure}{\columnwidth}
    \centering
    \begin{tikzpicture}[DP]
            \node[dp={2}{2}] (cnt) {DP};
            \draw[runconn, runame={resources}, relres=above,posres=0.9] (cnt_res1){};
            \draw[runconn, runame={}, relres=above,posres=0.9] (cnt_res2){};
            \draw[funconn, funame={functionalities},relfun=above,posfun=1.15] (cnt_fun1){};
            \draw[funconn, funame={},relfun=above,posfun=1.15] (cnt_fun2){};
\end{tikzpicture}
    \subcaption{A \gls{abk:dp} is a monotone relation between posets of \F{functionalities} and \R{resources}. \label{fig:mathcodesign}}

\end{subfigure}

\centering
\begin{subfigure}[b]{0.29\columnwidth}
  \centering
  \scalebox{0.8}{\begin{tikzpicture}[DP]
    \node[dp={1}{1}] (f) {$d$};
    \node[dp={1}{1}, right=1cm of f] (g) {$e$};
    \draw[rconn, rcname={}, fcname={}] (f_res1)  to (g_fun1);
    \draw[runconn, runame={}] (g_res1);
    \draw[funconn, funame={}] (f_fun1);
\end{tikzpicture}}
  \subcaption{Series.}
\end{subfigure}
\begin{subfigure}[b]{0.29\columnwidth}
  \centering
  \scalebox{0.8}{\begin{tikzpicture}[DP]
    \node[dp={1}{1}] (f) {$d$};
    \node[dp={1}{1}, below=0.3cm of f] (g) {$e$};
    \draw[runconn, runame={}] (f_res1){};
    \draw[runconn, runame={}] (g_res1){};
    \draw[funconn, funame={}] (f_fun1){};
    \draw[funconn, funame={}] (g_fun1){};
\end{tikzpicture}}
  \subcaption{Parallel.}
\end{subfigure}
\begin{subfigure}[b]{0.29\columnwidth}
  \centering
  \scalebox{0.8}{\begin{tikzpicture}[DP]
    \node[dp={2}{2}] (f) {$d$};
    \draw[runconn, runame={}] (f_res2){};
    \draw[funconn, funame={}] (f_fun2){};
    \draw[rconn,rcname={},fcname={},feedback=1,loos=3] (f_res1) -- ($(f)+(0,4)$) |- (f_fun1);
\end{tikzpicture}}
  \subcaption{Loop.}
\end{subfigure}

\label{fig:diagrams}
\caption{\Glspl{abk:dp} can be composed in different ways.}
\label{fig:dp_def}

\vspace{-5mm}
\end{figure}

\rebuttal{
\begin{definition}[Composition operations for \glspl{abk:dp}]\label{def:dp-compositions}
    The following operations construct new \glspl{abk:dp} from existing ones.
    
    \noindent \emph{Series}: for \glspl{abk:dp} $\dprba \in \dpOf{\posetP}{\posetQ}$, $\dprbb \in \dpOf{\posetQ}{\posetR}$, their series connection \rebuttal{$\dprba \mthen \dprbb$ is the \glsdp}
        \[            
        \{\tup{\poselxP, \poselxRnoRes} \mid
            \exists \poselxQ : \tup{\poselxP, \poselxQ} \in \dprba
            \text{ and }\tup{\poselxQ, \poselxRnoRes} \in \dprbb\}.
        \]
    Here $\dprba$ uses the functionalities of $\dprbb$ as its resources. \\
    \emph{Parallel}: for $\dprba \in \dpOf{\posetP}{\posetQ}$, $\dprba' \in \dpOf{\posetP'}{\posetQ'}$, their parallel connection \rebuttal{$\dprba \stackMathbin \dprba'$ is the \glsdp}
        \[
            \{\tup{\tup{\poselxP,\poselxP'}, \tup{\poselxQ,\poselxQ'}} \mid 
            \tup{\poselxP, \poselxQ} \in \dprba
            , \tup{\poselxP', \poselxQ'} \in \dprba' \}.
        \]
    This represents two non-interacting systems. \\
    \rebuttal{\emph{Feedback\slash Trace\slash Loop}}: for $\dprb \in \dpOf{\posetP \posetproduct \posetR}{\posetQ \posetproduct \posetR}$, its trace \rebuttal{$\traceOf{\dprb}$ is the \glsdp}
        \[
            \{ \tup{\poselxP, \poselxQ} \mid
            \exists \poselxRnoRes : \tup{\tup{\poselxP,\poselxRnoRes}, \tup{\poselxQ,\poselxRnoRes}} \in \dprb \}.
        \]
    Here $\dprb$ uses its own functionalities as resources. \\
    \emph{Union}\slash \emph{intersection}: for $\dprba,\dprbb \in \dpOf{\posetP}{\posetQ}$, the union $\dprba \union \dprbb$ represents a free choice between them:
        \[
            \{\tup{\poselxP,\poselxQ} \mid \\
            \tup{\poselxP, \poselxQ} \in \dprba
            \text{ or }\tup{\poselxP, \poselxQ} \in \dprbb \};
        \]
    while intersection~$\dprba \intersection \dprbb$ has feasibilities promised by both:
        \[
            \{\tup{\poselxP,\poselxQ} \mid
            \tup{\poselxP, \poselxQ} \in \dprba
            \text{ and }\tup{\poselxP, \poselxQ} \in \dprbb \}.
        \]
    Union and intersection extend to sets of \glspl{abk:dp}, e.g.\ $\union\setWithIndex{\dprb_i}{i \in I}$.
\end{definition}
}

\begin{remark}
\label{rmk:ct-dp-tsmc}
\rebuttal{\CTremark}
With \glsplposet as \emph{objects} \rebuttal{(interfaces of composition)} and \glspldp as \emph{morphisms} \rebuttal{(composable processes)}, the operations in \cref{def:dp-compositions} make the resulting category \emph{traced symmetric monoidal} \rebuttal{(it has a ``place side by side'' product modeling parallel composition, in which the order of parallel components is immaterial and loops can be closed by a feedback operator)}.
Moreover, for fixed $\funPosetF, \resPosetR$ the hom-poset $\dpOf{\funPosetF}{\resPosetR}$ \rebuttal{(all morphisms between a fixed pair of objects, here carrying an order)} is a \emph{complete lattice} \rebuttal{(every subset has a supremum and an infimum)}, and the category is enriched in $\Pos$ \rebuttal{(its hom-sets themselves carry an order)}, the category of \glsplposet and monotone functions~\cite{zardiniCoDesignComplexSystems2023}.

\end{remark}

\rebuttal{Designers care not only which functionality\slash resource pairs are achievable but which choices realize them, captured by \emph{\I{implementations}}; we color \I{implementations} and later defined \Sp{specifications} similar to \F{functionalities} and \R{resources}.
The definition below differs from \cite{censi2019,zardiniCoDesignComplexSystems2023}: it better resembles the new parameterized framework and is a more effective deterministic baseline.

\begin{definition}[\rebuttal{\glsdefhere{\Gls}{mdpi}\footnote{Note that \glspldp are monotone by definition. The term ``monotone'' here is inherited from monotone co-design \cite{censi2019}.}}]\label{def:mdpi}
    For \glspl{abk:poset}~$\funPosetF, \resPosetR$, an \gls{abk:mdpi} is a tuple~$\tup{\impSetI, \realize}$ with $\impSetI$ a set of \emph{implementations} (design choices) and $\realize \colon \impSetI \to \dpOf{\funPosetF}{\resPosetR}$.
    \rebuttal{We denote $\realize(\impi)$ by~$\dprb_\impi$, and call
    \begin{equation*}
        \dprb_{\tup{\impSetI, \realize}} \defeq \union\setWithIndex{\dprb_\impi}{\impi \in \impSetI} \in \dpOf{\funPosetF}{\resPosetR}
    \end{equation*}
    the \gls{abk:dp} \emph{induced} by $\tup{\impSetI, \realize}$.}
\end{definition}

Here $\realize(\impi)$ is the trade-off provided by $\impi$ and $\dprb_{\tup{\impSetI, \realize}}$ the \emph{free choice} among implementations: $\tup{\poselxF, \poselxR} \in \dprb_{\tup{\impSetI, \realize}}$ iff $\tup{\poselxF, \poselxR} \in \dprb_\impi$ for some $\impi$.
For \glsuav perception, hardware and software choices such as \II{LiDAR models} and \II{object detection algorithms} each yield a different trade-off between \FI{detection accuracy}, \RI{computation power}, and \RI{weather conditions}; every combination is an implementation $\impi$ with trade-off $\dprb_\impi$.
\glsplmdpi compose as \glspldp do, a composite's choice being a tuple of its components'; richer decision processes are a key contribution of this paper (\cref{subsec:imp-spec-adaptive-mdpi}).
}

\rebuttal{
\begin{definition}
\label{lem:lifted-compositions-imps}
    The composition operations in \cref{def:dp-compositions}, lifted to \glsplmdpi, are defined as follows.
    Each binary operation~$\diamond$ (one of $\mthen$, $\stack$, $\union$, or $\intersection$) is lifted to:
    \begin{multline*}
        \tup{\impSetI_1, \realize_1} \diamond \tup{\impSetI_2, \realize_2} \\
        \defeq
        \tup{\impSetI_1 \setproduct \impSetI_2, \impi_1 \setproduct \impi_2 \mapsto \realize_1(\impi_1) \diamond \realize_2(\impi_2)}.
    \end{multline*}
    The trace operation~$\trace$ is lifted to:
    \begin{align*}
        \traceOf{\tup{\impSetI, \realize}} \defeq \tup{\impSetI, \impi \mapsto \traceOf{\realize(\impi)}}.
    \end{align*}
    We reuse the notation of \cref{def:dp-compositions} for the lifted operations.
\end{definition}
These are well-defined: each output pairs a set with a map into $\dpOf{\funPosetF}{\resPosetR}$, since $\realize_1(\impi_1) \diamond \realize_2(\impi_2)$ and $\traceOf{\realize(\impi)}$ are \glspldp, so the output is again an \gls{abk:mdpi}.
No order need be preserved, as implementation sets carry none; hence this case is simpler than the interval case below.
}

\begin{remark}
\label{rmk:ct-mdpi-bicat}
\rebuttal{\CTremark}
With the lifted operations of \cref{lem:lifted-compositions-imps} one might hope \glsplposet and \glsplmdpi form a strict traced symmetric monoidal category.
With ordinary set products, however, the coherence diagrams commute only up to canonical $2$-isomorphisms \rebuttal{(invertible transformations between morphisms)}, giving a bicategorical structure.
The strictified set product of~\cite{censi2022} yields a (strict) category, but the order of implementation sets in \rebuttal{Cartesian} products still prevents strict functoriality of~$\stack$.
\end{remark}

\rebuttal{This framework supports practical questions about optimal design choices, formalized as \emph{queries}: they return all combinations of resources and design choices fulfilling the requested functionalities.}

\begin{definition}[Queries for \glsplmdpi]\label{def:mdpi-queries}
    \rebuttal{
    For an \glsmdpi~$\tup{\impSetI, \realize}$ with functionality and resource \glsplposet $\funPosetF$, $\resPosetR$, the \emph{Fix \F{functionalities} minimize \R{resources}} query is
    \begin{equation*}
    \begin{multlined}
        \fixfunminres_{\tup{\impSetI, \realize}} \colon \subsetAfunF \mapsto
        \\
        \setWithArg{\tup{\poselxR, \impi}}{\forall \poselxF \in \subsetAfunF, \tup{\poselxF, \poselxR} \in \realize(\impi)}.
    \end{multlined}
    \end{equation*}
    }
\end{definition}

\begin{remark}\label{rmk:queries-fix-any-opt-others}
\rebuttal{Designers care about \emph{optimal} resources and the choices realizing them, suggesting restriction to the \emph{minimal elements} of $\proj_1(\fixfunminres_{\tup{\impSetI, \realize}}(\subsetAfunF))$.
A feasible resource set however need not be the upper closure of its minimal elements (e.g. $(1,\infty)$ has none), so the framework computes all feasible pairs as in \cref{def:mdpi-queries}.
Real requirements are moreover often \F{functionality}\slash \R{resource} \emph{pairs}, e.g.\ \FI{90\%} accuracy on \RI{sunny days} and \FI{80\%} on \RI{foggy days} while minimizing \R{computation power}; these are allowed by taking the \emph{opposite}
of a \R{resource} \glsposet and treating it as a \F{functionality}.}
\end{remark}

\subsection{Interval uncertainty in co-design}\label{subsec:interval-uncertain-dp}
In~\cite{censi2017uncertainty, zardiniCoDesignComplexSystems2023}, uncertainty is modeled via \emph{intervals} of \glspl{abk:dp}, i.e., \emph{optimistic} (best-case) and \emph{pessimistic} (worst-case) designs; recall that $\dpOf{\funPosetF}{\resPosetR}$ is ordered by inclusion, so a ``better'' \rebuttal{(``larger'')} \gls{abk:dp} makes more~$\tup{\poselxF, \poselxR}$ pairs feasible.

\rebuttal{

\begin{definition}[Interval uncertainty of \glspl{abk:dp}]
\label{lem:interval-lifted-ops}
    Given \glspl{abk:poset}~$\funPosetF$ and $\resPosetR$, the set of interval \glspl{abk:dp}~$\IntervalOf{\dpOf{\funPosetF}{\resPosetR}}$ is defined as
    \begin{equation*}
            \setWithArg{\interval{\dprbL}{\dprbU}}{\dprbL, \, \dprbU \in \dpOf{\funPosetF}{\resPosetR}, \, \dprbL \posetleq \dprbU},
    \end{equation*}
    where the lower bound $\dprbL$ represents the pessimistic estimate, and $\dprbU$ represents the optimistic estimate.
    Compositional operations in \cref{def:dp-compositions} are lifted to intervals of \glspl{abk:dp}:
    \begin{align*}
        \IntervalSty{\trace}(\interval{\dprbL}{\dprbU}) &\defeq \interval{\traceOf{\dprbL}}{\traceOf{\dprbU}}, \\
        \interval{\dprbL}{\dprbU} \mathbin{\IntervalSty{\diamond}} \interval{\dprbL'}{\dprbU'} &\defeq \interval{\dprbL \diamond \dprbL'}{\dprbU \diamond \dprbU'},
    \end{align*}
    where~$\diamond$ is either $\mthen$, $\stack$, $\union$ or $\intersection$.
    Moreover, $\dpOf{\funPosetF}{\resPosetR}$ embeds into $\IntervalOf{\dpOf{\funPosetF}{\resPosetR}}$ by $\dprb \mapsto \interval{\dprb}{\dprb}$~\cite{censi2017uncertainty}.
\end{definition}
\rebuttal{For \glsuav sensing, a design combination might guarantee \FI{detection accuracy} between \FI{80\%} and \FI{85\%}: the \emph{lower bound} $\dprbL$ makes accuracies up to \FI{80\%} feasible, the \emph{upper bound} $\dprbU$ up to \FI{85\%}.}

\rebuttal{
\Cref{lem:interval-lifted-ops} carries a well-definedness obligation on which the whole interval framework rests: for $\IntervalSty{\diamond}$ to land in $\IntervalOf{\dpOf{\funPosetF}{\resPosetR}}$, the pair produced must satisfy $\dprbL \diamond \dprbL' \posetleq \dprbU \diamond \dprbU'$.
It does, because every operation in \cref{def:dp-compositions} is \emph{monotone} for the inclusion order, so $\dprbL \posetleq \dprbU$ and $\dprbL' \posetleq \dprbU'$ give the inequality, and likewise for $\trace$~\cite{censi2017uncertainty, zardiniCoDesignComplexSystems2023}.
Monotonicity is thus what makes worst- and best-case bounds composable; we need it again in \cref{lem:composing-inner-bounds}.
}

\Glsplmdpi lift to interval uncertainty too.
\begin{definition}[\Glspl{abk:mdpi} with interval uncertainty]\label{def:mdpi-interval}
    For \glspl{abk:poset}~$\funPosetF, \resPosetR$, an \emph{\glsmdpi with interval uncertainty} is a tuple $\tup{\impSetI, \realize}$ with $\impSetI$ a set of \I{implementations} and $\realize \colon \impSetI \to \IntervalOf{\dpOf{\funPosetF}{\resPosetR}}$.
    Compositions are formed by replacing each operation in \cref{lem:lifted-compositions-imps}, e.g.\ $\trace$, with its interval lift from \cref{lem:interval-lifted-ops}, e.g.\ $\IntervalOf{\trace}$.
\end{definition}
}

\rebuttal{These are well-defined for both reasons combined: a composite's implementation set is again a set, and the interval-valued map lands in $\IntervalOf{\dpOf{\funPosetF}{\resPosetR}}$ by monotonicity.
Queries then yield optimistic and pessimistic answers~\cite{censi2017uncertainty}: run \cref{def:mdpi-queries} on both bounds and read the resource sets as best- and worst-case designs.}

\section{Distributional uncertainty and adaptive decisions in co-design}\label{sec:distributional-uncertainty-co-design}
\rebuttal{Real design processes are more complex than one-shot \I{implementation} choices under best-- and worst--case uncertainty: \I{implementations} carry parameters inducing \Sp{specifications} uncertain in ways intervals cannot capture, and some decisions can wait until part of that uncertainty resolves.

Consider again the perception and actuator sub-systems of a \glsuav.
A sensor is chosen from a catalogue, each variant posing \Sp{specifications} such as field of view, noise characteristics, and failure modes. Manufacturers guarantee only \emph{distributions} over these parameters, and purchasing a sensor draws a sample from them.
Actuator \Sp{specifications} are likewise observable only after obtaining a concrete product, and the designer is often free to select one component \emph{after} observing another, for instance buying the actuator after testing the delivered sensor.
The design process is therefore inherently \emph{adaptive} and involves full probability distributions, not only interval bounds; reducing them to confidence intervals would discard exactly the quantitative trade-offs between \F{functionality}, \R{resources}, and risk that practitioners care about.}

To support such trade-offs compositionally we move from intervals to \emph{distributions over \glspldp}, and from static \glsplmdpi to \glsplmdpi with \emph{distributional uncertainty and adaptive decisions}.
\rebuttal{
As in stochastic programming we take the distributions as given, agnostic to what they represent; in system design they arise from at least four sources:
\begin{enumerate}[nosep,label=(\roman*)]
    \item \emph{Manufacturing variability}: unit-to-unit spread of nominally identical components (motor constants, battery capacity), often a datasheet tolerance.
    \item \emph{Environmental variability}: conditions outside control of the designer such as weather, temperature, or the \FI{task profile} of \cref{sec:numerical-example}, typically estimated from logged operational data.
    \item \emph{Algorithmic randomness}: stochastic performance of sampling- or learning-based components, characterized by benchmarking over a test distribution.
    \item \emph{Epistemic uncertainty}: limited knowledge of a component, e.g.\ a surrogate fitted to few measurements, the distribution being a posteriori knowledge.
\end{enumerate}
The first three are \emph{aleatoric}, the last \emph{epistemic}; our constructions do not distinguish them, since both yield a distribution over \glspldp and the composition results apply verbatim.
What \emph{does} matter is independence: \cref{lem:composing-inner-bounds} assumes the component sample spaces factor, true for independent physical origins (motor and battery tolerances) but not under a common cause (a shared ambient temperature), in which case the shared quantity should be the common parameter fed to both components.
Estimating these distributions, updating by conditioning, and exploration--exploitation are future work.
}

\subsection{Distributions over \glsentrylongpl{abk:dp} as uncertain design results}\label{subsec:distri-dps}

In deterministic co-design the design result for fixed \I{implementations} is a \glsdp encoding \F{functionality}\slash \R{resource} trade-offs (\cref{def:dp}).
To model distributional uncertainty we need \emph{random} \glspldp, i.e.\ \emph{distributions over \glspldp}, that are:
\rebuttal{
\begin{enumerate}[label=(\roman*),nosep]
    \item \emph{Compositional}: the series, parallel, and feedback operations in \cref{def:dp-compositions} remain measurable;
    \item \emph{Semantically expressive}: each evaluation $\evOfFunRes{\poselxF}{\poselxR} \colon \dpOf{\funPosetF}{\resPosetR} \to \setOfBool$, $\dprb \mapsto \big(\tup{\poselxF,\poselxR} \in \dprb \big)$, is measurable, so we can ask the probability that a \F{functionality}\slash \R{resource} pair (or countable family) is feasible;
    \item \emph{Stochasticity of \F{functionalities}\slash \R{resources}}: the \emph{constant injection} ${\constinj} \colon {\funPosetF \posetproduct \resPosetR}\to {\dpOf{\funPosetF}{\resPosetR}}$, ${\tup{\poselxF, \poselxR}} \mapsto {\lowerOf{\poselxF} \posetproduct \upperOf{\poselxR}}$, is measurable, turning distributions over~$\funPosetF \posetproduct \resPosetR$ into distributions over \glspldp;
    \item \emph{Compatible with co-design}: deterministic \glspldp embed as degenerate (delta) distributions.
\end{enumerate}
A naive approach puts a sigma algebra on $\dpOf{\funPosetF}{\resPosetR}$ and takes ordinary probability measures; \Cref{cex:impossibility-sigma-algebra-dps} shows the first three requirements already fail, whereas this section's construction satisfies all four.
}

\begin{counterexample}[\rebuttal{Impossibility of sigma algebras on \glspldp}]\label{cex:impossibility-sigma-algebra-dps}
Equip $\Reals$ with the discrete partial order, $x \posetleq y \iff x = y$.
\rebuttal{Each measurable map~$\mapf \colon \Reals \to \setOfBool$ then defines a \glsdp by:
$
     \dprb_{\mapf}
    \defeq
    \setWithArg{\tup{\realelx, \boolelb}}{\mapf(\realelx) \posetleqOf{\setOfBool} \boolelb}
    \in \dpOf{\F{\Reals}}{\R{\setOfBool}}.
$}
Write~$\SetExp{\setOfBool}{\Reals}$ as the set of measurable maps~$\Reals \to \setOfBool$, and $\varphi \colon \SetExp{\setOfBool}{\Reals} \to \dpOf{\F{\Reals}}{\R{\setOfBool}}, \
    \mapf \mapsto \dprb_{\mapf}$.
Suppose for contradiction a sigma algebra~$\sigAlgOf{\dpOf{\F{\Reals}}{\R{\setOfBool}}}$ makes (i) all operations of \cref{def:dp-compositions}, (ii) the evaluations $\evOfFunRes{\poselxF}{\poselxR}$, and (iii) the constant injection $\constinj$ measurable.
\rebuttal{Equip~$\SetExp{\setOfBool}{\Reals}$ with the pullback sigma algebra $\sigAlgOf{\SetExp{\setOfBool}{\Reals}} \defeq \setWithArg{\SetA \subseteq \SetExp{\setOfBool}{\Reals}}{\varphi(\SetA) \in \sigAlgOf{\dpOf{\F{\Reals}}{\R{\setOfBool}}}}$.}
The composite of three measurable maps
\begin{multline*}
    (\setOfBool \setproduct \Reals) \setproduct \SetExp{\setOfBool}{\Reals}
    \mapArrOf{\constinj \setproduct \varphi}
    \dpOf{\F{\setOfBool}}{\R{\Reals}} \setproduct \dpOf{\F{\Reals}}{\R{\setOfBool}}
    \\
    \mapArrOf{\mthen}
    \dpOf{\F{\setOfBool}}{\R{\setOfBool}}
    \mapArrOf{\evOfFunRes{\False}{\False}}
    \setOfBool
\end{multline*}
is measurable.
\rebuttal{Fixing the first argument to~$\False$, the induced map sends $\tup{\tup{\False, \realelx}, \mapf}$ to $\set{\tup{\False, \realelx}} \setproduct \setWithArg{\tup{\realelx', \boolelb}}{\mapf(\realelx') \posetleqOf{\setOfBool} \boolelb}$, then to $\setWithArg{\tup{\False, \boolelb}}{\mapf(\realelx) \posetleqOf{\setOfBool} \boolelb}$, then to $\neg \mapf(\realelx)$, whose measurability is equivalent to that of} the usual evaluation map~$\SetExp{\setOfBool}{\Reals} \times \Reals \to \setOfBool,\ \tup{\mapf,\realelx} \mapsto \mapf(\realelx).$
No sigma algebra on~$\SetExp{\setOfBool}{\Reals}$ makes this evaluation measurable~\cite{aumannBorelStructuresFunction1961}, forming a contradiction.
\end{counterexample}

\rebuttal{
\begin{remark}[Why a new formalism is needed]
It is worth pausing on why the distributional case demands machinery the interval case does not, since the two are often expected to be siblings.
In dynamical systems a disturbance and a signal are objects of the same kind, both time-indexed trajectories, so robust and stochastic formulations share one language and differ only in how the disturbance is quantified.
Here the object carrying the uncertainty is the \gls{abk:dp} itself: a \emph{relation} between \glsplposet, i.e., a higher-order object.
Intervals cope with this because \glspldp are ordered and the operations of \cref{def:dp-compositions} are monotone, so bounds compose (\cref{lem:interval-lifted-ops}).
Distributions do not: \cref{cex:impossibility-sigma-algebra-dps} shows that \emph{no} sigma algebra on $\dpOf{\funPosetF}{\resPosetR}$ supports even the three minimal requirements (i)--(iii), so the obstruction is a property of ordinary measure theory rather than an artifact of a particular construction.
What is required is a setting in which map spaces are first-class objects and evaluation is itself a morphism.
This is the same obstruction that blocks higher-order probability on function spaces, and \glsxtrlongpl{abk:qms}\slash \glsxtrlongpl{abk:qus} are exactly such a setting~\cite{heunenConvenientCategoryHigherOrder2017,forreQuasiMeasurableSpaces2021}: the framework below is not a generalization for its own sake, but close to the least structure under which ``a distribution over a design problem'' is definable at all while the co-design operations survive.
The dividend is that everything downstream is elementary: adaptivity is Markov kernels (\cref{def:repara-distri-mdpi}), queries are maps out of the distribution space (\cref{def:distri-mdpi-queries}), and the designer writes components and a diagram as in the deterministic case, with the sources of randomness made explicit.
\end{remark}

We review \glsxtrlongpl{abk:qms}\slash \glsxtrlongpl{abk:qus}~\cite{heunenConvenientCategoryHigherOrder2017,forreQuasiMeasurableSpaces2021} before constructing distributions on \glspldp.
}

\subsubsection{\glsentrylongpl{abk:qms} and \glsentrylongpl{abk:qus}}
Probability theory fixes a sample space $\sampleSpaceOmega$ with a sigma algebra $\SigAlgOf{\sampleSpaceOmega}$ of events, then considers \emph{random variables} from it into other \rebuttal{\emph{measurable spaces}}.
\glsxtrlong{abk:qms} invert this: random variables are first-class and sigma algebras derived from them, resolving \rebuttal{the impossibility above}.
\rebuttal{Introduced as a categorical construction for \emph{higher-order probability} over maps \cite{heunenConvenientCategoryHigherOrder2017}, many of their results are proved categorically; category theory \rebuttal{stays non-prerequisite here, as we restate every result we need in set- and measure-theoretic language.}
\rebuttal{Fix $\tup{\sampleSpaceOmega, \SigAlgOf{\sampleSpaceOmega}}$ and write} $\SetExp{\sampleSpaceOmega}{\sampleSpaceOmega}$ for the measurable maps $\sampleSpaceOmega \to \sampleSpaceOmega$.}

\begin{definition}[Quasi-measurable spaces, quasi-measurable maps, and induced sigma algebras]\label{def:qms-and-maps-and-sigalgs}
A \emph{\glsdefhere{\gls}{qms}} is a pair~$\tup{\SetA, \SetExp{\SetA}{\sampleSpaceOmega}}$ with~$\SetA$ a set and~$\SetExp{\SetA}{\sampleSpaceOmega}$ a distinguished set of \emph{random variables}~$\sampleSpaceOmega\to \SetA$ such that:
    \begin{enumerate}[nosep]
        \item constant maps~$\const_{\setelmA} \colon \outcomeomega \mapsto \setelmA$ for all $\setelmA\in \SetA$, lie in $\SetExp{\SetA}{\sampleSpaceOmega}$;
        \item $\randomVarAlpha \mapFollowing \mapVarphi \in \SetExp{\SetA}{\sampleSpaceOmega}$ for every~$\randomVarAlpha \in \SetExp{\SetA}{\sampleSpaceOmega}$ and measurable $\mapVarphi \in \SetExp{\sampleSpaceOmega}{\sampleSpaceOmega}$.
    \end{enumerate}
    \rebuttal{When the random variables are clear we write ``\glsqms $\SetA$''; the sample space with all measurable maps is itself a \glsqms.
    A map $\mapf \colon \SetA \to \SetB$ of \glsplqms is \emph{quasi-measurable} if $\mapf \mapFollowing \randomVarAlpha \in \SetExp{\SetB}{\sampleSpaceOmega}$ for all~$\randomVarAlpha \in \SetExp{\SetA}{\sampleSpaceOmega}$; $\SetExp{\SetB}{\SetA}$ denotes these.}
    \rebuttal{Every \glsqms induces a sigma algebra $\QUSBorel(\SetA) \defeq \setWithArg{\subsetXA \subseteq \SetA}{\forall \randomVarAlpha \in \SetExp{\SetA}{\sampleSpaceOmega}, \randomVarAlpha^\inv(\subsetXA) \in \SigAlgOf{\sampleSpaceOmega}}$.}

\end{definition}

\rebuttal{When $\SetA$ carries a natural sigma algebra, e.g.\ the Borel one of a Polish space, taking all measurable maps as random variables reconnects to ordinary probability: if $\sampleSpaceOmega$ is also Polish, the induced sigma algebra is the completion of the Borel one.
Taking \emph{all} maps $\sampleSpaceOmega \to \SetA$ as random variables would leave very few maps \emph{from} $\SetA$ quasi-measurable, just as the power set as sigma algebra leaves very few maps \emph{into} $\SetA$ measurable.}

\begin{definition}[Product of \glsplqms]\label{def:product-qms}
    \rebuttal{The \emph{product} of \glsplqms $\tup{\SetA, \SetExp{\SetA}{\sampleSpaceOmega}}$, $\tup{\SetB, \SetExp{\SetB}{\sampleSpaceOmega}}$ is $\tup{\SetA \setproduct \SetB, \SetExp{\left( \SetA \setproduct \SetB \right)}{\sampleSpaceOmega}}$, a random variable on~$\SetA \setproduct \SetB$ being a tuple of one per factor: $\SetExp{\left( \SetA \setproduct \SetB \right)}{\sampleSpaceOmega} \defeq \setWithArg{\tup{\randomVarAlphaa, \randomVarAlphab}}{\randomVarAlphaa \in \SetExp{\SetA}{\sampleSpaceOmega}, \randomVarAlphab \in \SetExp{\SetB}{\sampleSpaceOmega}}$.}
\end{definition}

\rebuttal{Function spaces now carry natural sets of random variables.}

\begin{definition}[\glsqms of quasi-measurable maps and evaluation]\label{def:qms-maps}
    \rebuttal{As random variables for $\SetExp{\SetB}{\SetA}$ we take those maps $\randomVarBeta \colon \sampleSpaceOmega \to \SetExp{\SetB}{\SetA}$ for which ${\outcomeomega}\mapsto {\randomVarBeta\big(\mapVarphi(\outcomeomega)\big)\big(\randomVarAlpha(\outcomeomega)\big)}$ is a random variable in $\SetExp{\SetB}{\sampleSpaceOmega}$ for every $\randomVarAlpha \in \SetExp{\SetA}{\sampleSpaceOmega}$ and $\mapVarphi \in \SetExp{\sampleSpaceOmega}{\sampleSpaceOmega}$.}
\end{definition}

\rebuttal{Crucially, evaluation ${\ev} \colon {\SetA \setproduct \SetExp{\SetB}{\SetA}}\to {\SetB}$, ${\tup{\setelmA, \mapf}}\mapsto {\mapf(\setelmA)}$, is then quasi-measurable \cite{forreQuasiMeasurableSpaces2021}.}

\rebuttal{
\begin{remark}
\label{rmk:ct-qms-ccc}
\CTremark
With the terminal \glsqms (a singleton), products, and the \glsqms of quasi-measurable maps, \glsplqms and quasi-measurable maps form a \emph{\rebuttal{Cartesian} closed category}~\cite{forreQuasiMeasurableSpaces2021}; the next lemma is a consequence.
\end{remark}
}

\begin{lemma}\label{lem:qms-curry}
    \rebuttal{The curry (partial evaluation) ${\bar{\mapf}} \colon {\SetA}\to {\SetExp{\SetC}{\SetB}}$, ${\setelmA}\mapsto{\mapf(\setelmA, -)}$, of a quasi-measurable $\mapf \colon \SetA \setproduct \SetB \to \SetC$ is quasi-measurable, as are the pre- and post-composition maps ${(-) \mapFollowing \mapf} \colon {\mapg}\mapsto {\mapg \mapFollowing \mapf}$ and ${\mapf \mapFollowing (-)} \colon {\maph}\mapsto {\mapf \mapFollowing \maph}$, of types ${\SetExp{\SetC}{\SetB}}\to {\SetExp{\SetC}{\SetA}}$ and ${\SetExp{\SetA}{\SetC}}\to {\SetExp{\SetB}{\SetC}}$, for quasi-measurable $\mapf \colon \SetA \to \SetB$.}
\end{lemma}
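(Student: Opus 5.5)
The plan is to verify each claim directly from the definition of random variables on function spaces (\cref{def:qms-maps}), unwinding quasi-measurability to the two closure axioms of \cref{def:qms-and-maps-and-sigalgs} and the product structure of \cref{def:product-qms}. No categorical machinery is needed beyond these definitions, in line with the paper's convention.

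\emph{Curry.} Fix a random variable $\randomVarAlpha \in \SetExp{\SetA}{\sampleSpaceOmega}$. I must show that $\randomVarBeta \defeq \bar{\mapf} \mapFollowing \randomVarAlpha$, i.e.\ $\outcomeomega \mapsto \mapf(\randomVarAlpha(\outcomeomega), -)$, is a random variable of $\SetExp{\SetC}{\SetB}$. First I will check that each value $\mapf(\setelmA, -)$ actually lies in $\SetExp{\SetC}{\SetB}$. For $\randomVarAlphab \in \SetExp{\SetB}{\sampleSpaceOmega}$, the map $\outcomeomega \mapsto \mapf(\setelmA, \randomVarAlphab(\outcomeomega))$ equals $\mapf \mapFollowing \tup{\const_{\setelmA}, \randomVarAlphab}$. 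This pair is a random variable of $\SetA \setproduct \SetB$ by axiom 1 and \cref{def:product-qms}, so the composite lies in $\SetExp{\SetC}{\sampleSpaceOmega}$. Next, for $\randomVarAlphab \in \SetExp{\SetB}{\sampleSpaceOmega}$ and $\mapVarphi \in \SetExp{\sampleSpaceOmega}{\sampleSpaceOmega}$, the test map is
\[
\outcomeomega \mapsto \mapf\big(\randomVarAlpha(\mapVarphi(\outcomeomega)), \randomVarAlphab(\outcomeomega)\big) = \mapf \mapFollowing \tup{\randomVarAlpha \mapFollowing \mapVarphi, \randomVarAlphab}(\outcomeomega).
\]
Here $\randomVarAlpha \mapFollowing \mapVarphi \in \SetExp{\SetA}{\sampleSpaceOmega}$ by axiom 2, so the tuple is a product random variable and the result lies in $\SetExp{\SetC}{\sampleSpaceOmega}$.

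\emph{Pre-composition.} Let $\mapf \colon \SetA \to \SetB$ be quasi-measurable and $\randomVarBeta$ a random variable of $\SetExp{\SetC}{\SetB}$. I will show $\outcomeomega \mapsto \randomVarBeta(\outcomeomega) \mapFollowing \mapf$ is a random variable of $\SetExp{\SetC}{\SetA}$. Its values are quasi-measurable because composites of quasi-measurable maps are quasi-measurable, which follows directly from the definition. For the test condition, take $\randomVarAlpha \in \SetExp{\SetA}{\sampleSpaceOmega}$ and $\mapVarphi$. The test map is $\outcomeomega \mapsto \randomVarBeta(\mapVarphi(\outcomeomega))\big((\mapf \mapFollowing \randomVarAlpha)(\outcomeomega)\big)$. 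Since $\mapf \mapFollowing \randomVarAlpha \in \SetExp{\SetB}{\sampleSpaceOmega}$, the defining property of $\randomVarBeta$ applied to the pair $(\mapf \mapFollowing \randomVarAlpha, \mapVarphi)$ gives the claim.

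\emph{Post-composition.} Take a random variable $\randomVarBeta$ of $\SetExp{\SetA}{\SetC}$ and consider $\outcomeomega \mapsto \mapf \mapFollowing \randomVarBeta(\outcomeomega)$. For each test pair $(\randomVarAlpha, \mapVarphi)$, the map $\outcomeomega \mapsto \randomVarBeta(\mapVarphi(\outcomeomega))(\randomVarAlpha(\outcomeomega))$ lies in $\SetExp{\SetA}{\sampleSpaceOmega}$. Post-composing it with the quasi-measurable $\mapf$ lands in $\SetExp{\SetB}{\sampleSpaceOmega}$.

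The main subtlety is \emph{well-definedness of the codomain}: the function spaces contain only quasi-measurable maps, so each value must be checked to be quasi-measurable before the random-variable condition makes sense. This is why the curry argument above needs the constant random variables of axiom 1. I will also note that \cref{def:qms-maps} quantifies over the reparametrisation $\mapVarphi$ on the function argument only. This is exactly what is needed for the curry case, where axiom 2 absorbs $\mapVarphi$ into $\randomVarAlpha$.
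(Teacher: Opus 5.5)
Your argument is correct, but it is not how the paper obtains the lemma. The paper gives no direct proof. It presents the lemma as a consequence of the Cartesian closedness of quasi-measurable spaces (\cref{rmk:ct-qms-ccc}), imported from \cite{forreQuasiMeasurableSpaces2021}. In that reading, $\bar{\mapf}$ is the exponential transpose of $\mapf$. The pre- and post-composition maps are themselves curries of $\tup{\mapg, \setelmA} \mapsto \mapg(\mapf(\setelmA))$ and $\tup{\maph, \setelmC} \mapsto \mapf(\maph(\setelmC))$, which are quasi-measurable as composites of evaluation with $\mapf$. So all three claims reduce to quasi-measurability of $\ev$ plus the universal property of the exponential.

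You instead unwind \cref{def:qms-maps} by hand. Constant random variables give well-definedness of each $\mapf(\setelmA,-)$, and axiom 2 absorbs the reparametrisation $\mapVarphi$ into $\randomVarAlpha$ for the curry. Pre-composition feeds $\mapf \mapFollowing \randomVarAlpha$ into the defining property of $\randomVarBeta$, and post-composition just composes with $\mapf$.

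The categorical route is shorter and shows the three statements are one structural fact. However, it relies on an external theorem and on category theory, which the paper declares non-prerequisite. Your route is self-contained in the paper's own set-theoretic definitions. It also makes explicit the codomain checks (values must be quasi-measurable maps) that the categorical phrasing hides.

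For completeness, state that check in the post-composition case too. Each $\mapf \mapFollowing \randomVarBeta(\outcomeomega)$ is quasi-measurable as a composite of quasi-measurable maps, so it lies in $\SetExp{\SetB}{\SetC}$.
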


To connect \glsplqms with standard probability and Markov kernels compositionally, we restrict to ``good'' sample spaces.

\begin{definition}[Quasi-universal spaces]\label{def:qus}
A \glsqms~$\tup{\SetA, \SetExp{\SetA}{\sampleSpaceOmega}}$ is a \emph{\glsdefhere{\gls}{qus}} if the sample space~$\sampleSpaceOmega$ is \rebuttal{Polish, typically~$\sampleSpaceOmega=\mathbb{R}^n$ with the usual topology, and completion of its Borel sigma-algebra.}
\end{definition}

\rebuttal{This is not computationally limiting: randomness that is computable or arises from countably many experiments originates from such sample spaces, and all \glsqms constructions restrict straightforwardly to \glsplqus.}
We adopt push-forward of distributions over \glsqus \cite{forreQuasiMeasurableSpaces2021}.

\begin{definition}[Distributions over \glsplqus]\label{def:distri-qus}
Let~$\tup{\SetA, \SetExp{\SetA}{\sampleSpaceOmega}}$ be a \glsqus.
\rebuttal{A \emph{distribution over}~$\SetA$ is an equivalence class~$\set{\tup{\distmu, \randomVarAlpha}}/\simeq$ with~$\distmu$ a probability distribution on $\sampleSpaceOmega$ and $\randomVarAlpha \in \SetExp{\SetA}{\sampleSpaceOmega}$ a random variable.
Two pairs are equivalent if $\distmu(\randomVarAlpha^\inv(\subsetXA)) = \distmu'(\randomVarAlpha'^\inv(\subsetXA))$ for all $\subsetXA \in \QUSBorel(\SetA)$, i.e.\ if they induce the same measure on the induced sigma algebra.}
We write $\QUSDistriOf{\SetA}$ for the set of classes and $\tup{\distmu, \randomVarAlpha}$ for a class when unambiguous.
\rebuttal{To make $\QUSDistriOf{\SetA}$ itself a \glsqus, we take as random variables precisely the maps $\measurePushForward{\randomVarAlpha}{\distkappa} \colon {\sampleSpaceOmega}\to{\QUSDistriOf{\SetA}}$, ${\outcomeomega}\mapsto{\tup{\randomVarKappa(\outcomeomega), \randomVarAlpha}}$,}
    where~$\randomVarAlpha\in \SetExp{\SetA}{\sampleSpaceOmega}$ and~$\randomVarKappa \in \SetExp{\QUSDistriOf{\sampleSpaceOmega}}{\sampleSpaceOmega}$ is a classical Markov kernel assigning each~$\outcomeomega$ a distribution~$\randomVarKappa(\outcomeomega)$ on~$\sampleSpaceOmega$.
    One can check that $\QUSDistriOf{\sampleSpaceOmega}$ (with the induced sigma algebra $\QUSBorel(\sampleSpaceOmega)$) is the usual space of probability measures on~$\sampleSpaceOmega$ \rebuttal{\cite{forreQuasiMeasurableSpaces2021}}.
\end{definition}

\rebuttal{
As in usual probability, (quasi) measurable deterministic maps induce maps between spaces of distributions.
\begin{lemma}\label{lem:qus-maps-lift-dists}
    Each quasi-measurable $\mapf \colon \SetA \to \SetB$ induces $\QUSDistriOf{\mapf} \colon \QUSDistriOf{\SetA} \to \QUSDistriOf{\SetB},\, \tup{\distmu, \randomVarAlpha} \mapsto \tup{\distmu, \mapf \mapFollowing \randomVarAlpha}$, transforming distributions by sampling an outcome and applying the map; $\QUSDistriOf{\mapf}$ is quasi-measurable \cite{forreQuasiMeasurableSpaces2021}.
\end{lemma}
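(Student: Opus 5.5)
The plan has three parts. First, show that $\QUSDistriOf{\mapf}$ is well defined on equivalence classes. Second, check that its values really are distributions over $\SetB$. Third, verify quasi-measurability directly against the random variables of $\QUSDistriOf{\SetA}$ from \cref{def:distri-qus}.

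For the value check, take a representative $\tup{\distmu, \randomVarAlpha}$. Since $\mapf$ is quasi-measurable, $\mapf \mapFollowing \randomVarAlpha \in \SetExp{\SetB}{\sampleSpaceOmega}$ by \cref{def:qms-and-maps-and-sigalgs}. Hence $\tup{\distmu, \mapf \mapFollowing \randomVarAlpha}$ is a legitimate pair and determines a class in $\QUSDistriOf{\SetB}$.

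For independence of the representative, the key observation is that $\mapf$ is measurable for the induced sigma algebras: if $\subsetYB \in \QUSBorel(\SetB)$, then $\mapf^\inv(\subsetYB) \in \QUSBorel(\SetA)$. Indeed, for every $\randomVarAlpha \in \SetExp{\SetA}{\sampleSpaceOmega}$ we have $\randomVarAlpha^\inv(\mapf^\inv(\subsetYB)) = (\mapf \mapFollowing \randomVarAlpha)^\inv(\subsetYB)$. This set lies in $\SigAlgOf{\sampleSpaceOmega}$ because $\mapf \mapFollowing \randomVarAlpha$ is a random variable on $\SetB$. Now suppose $\tup{\distmu, \randomVarAlpha} \simeq \tup{\distmu', \randomVarAlpha'}$. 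Applying the defining equality of $\simeq$ to the set $\mapf^\inv(\subsetYB)$ gives
\[
\distmu\big((\mapf \mapFollowing \randomVarAlpha)^\inv(\subsetYB)\big) = \distmu'\big((\mapf \mapFollowing \randomVarAlpha')^\inv(\subsetYB)\big),
\]
so the two image pairs are equivalent.

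For quasi-measurability, take a random variable of $\QUSDistriOf{\SetA}$, which by definition has the form $\measurePushForward{\randomVarAlpha}{\distkappa} \colon \outcomeomega \mapsto \tup{\randomVarKappa(\outcomeomega), \randomVarAlpha}$ for some $\randomVarAlpha \in \SetExp{\SetA}{\sampleSpaceOmega}$ and Markov kernel $\randomVarKappa$. Post-composing with $\QUSDistriOf{\mapf}$ gives
\[
\outcomeomega \mapsto \tup{\randomVarKappa(\outcomeomega), \mapf \mapFollowing \randomVarAlpha}.
\]
This is exactly $\measurePushForward{(\mapf \mapFollowing \randomVarAlpha)}{\distkappa}$, which is one of the designated random variables of $\QUSDistriOf{\SetB}$, since $\mapf \mapFollowing \randomVarAlpha \in \SetExp{\SetB}{\sampleSpaceOmega}$ and the kernel $\randomVarKappa$ is unchanged.

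The only step that needs care is the well-definedness argument, specifically the measurability of $\mapf$ with respect to $\QUSBorel$. Once that is in place, the remaining steps are direct unfoldings of the definitions. In particular, no analytic-set machinery (\cref{thm:measurable-image-pre-image-Souslin}) is needed here.
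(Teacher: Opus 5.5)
Your proof is correct. The value check, the well-definedness argument, and the direct unfolding of the random variables $\measurePushForward{\randomVarAlpha}{\randomVarKappa}$ are exactly what is needed; well-definedness rests on $\mapf$ being measurable from $\QUSBorel(\SetA)$ to $\QUSBorel(\SetB)$, i.e.\ $\mapf^{-1}$ maps $\QUSBorel(\SetB)$ into $\QUSBorel(\SetA)$. The paper gives no proof of its own and only cites \cite{forreQuasiMeasurableSpaces2021}, where functoriality of $\QUSDistri$ is obtained by this same standard unfolding, so your write-up is a self-contained version of that argument, including the well-definedness step the paper's statement leaves implicit.
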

}

\rebuttal{
\emph{Markov kernels} and their compositions carry over from classical measure theory.

\begin{definition}[Markov kernels for \glsplqus and their compositions]\label{def:markov-kernels-qus}
For \glsplqus $\SetA, \SetB$, a \emph{Markov kernel}~$\SetA \MarkovArr \SetB$ is a quasi-measurable map~$\measureKernela \colon \SetA \to \QUSDistriOf{\SetB}$.
The \emph{Kleisli composition}~$\measureKernela \QUSMarkovFollowing \measureKernelb\colon \SetA \MarkovArr \SetC$ of~$\measureKernela \colon \SetA \MarkovArr \SetB$ and~$\measureKernelb \colon \SetB \MarkovArr \SetC$ is defined as follows.
For~$\setelmA\in \SetA$ write~$\measureKernela(\setelmA)=\tup{\distmu, \randomVarAlpha}$; as~$\measureKernelb$ is quasi-measurable and~$\randomVarAlpha\in \SetExp{\SetB}{\sampleSpaceOmega}$, the composite~$\randomVarAlpha\mapFollowing\measureKernelb$ lies in~$\SetExp{\QUSDistriOf{\SetC}}{\sampleSpaceOmega}$, so there are~$\randomVarKappa \in \SetExp{\QUSDistriOf{\sampleSpaceOmega}}{\sampleSpaceOmega}$, $\randomVarBeta \in \SetExp{\SetC}{\sampleSpaceOmega}$ with~$\randomVarAlpha \mapFollowing \measureKernelb = \measurePushForward{\randomVarBeta}{\randomVarKappa}$.
We set~$\left(\measureKernela \QUSMarkovFollowing \measureKernelb\right)(\setelmA)\defeq\tup{\distmu \kernelFollowing \randomVarKappa, \randomVarBeta}\in \QUSDistriOf{\SetC}.$
\end{definition}
}

\subsubsection{Sample space and assumptions on posets}
\rebuttal{We fix a sample space~$\sampleSpaceOmega$ (a \glsqus) aggregating all sources of foregoing randomness (\cref{subsec:distri-dps}) which is Polish; random variables into~$\SetA$ are maps $\sampleSpaceOmega \to \SetA$ in the \glsqms\slash \glsqus sense (not classical probability theory).
We ask \emph{which random variables into $\dpOf{\funPosetF}{\resPosetR}$ are compositional, computable, and rich enough for applications}, under mild regularity on the \glsplposet involved.}

\begin{assumption}\label{asm:posets-souslin-order-borel}
All \glsplposet considered are \emph{Souslin spaces} and the order relation~$\setWithArg{\tup{\poselxP,\poselxP'}}{\poselxP \posetleqOf{\posetP} \poselxP'}
            \subseteq \posetP \setproduct \posetP$
is a Borel set.
\end{assumption}

\rebuttal{It holds for finite \glsplposet with discrete topology, $\Reals^n$ with the usual topology and product or discrete order, and complete separable metric spaces with closed order relations.}

\begin{lemma}\label{lem:posets-souslin-product}
If \glsplposet $\posetP, \posetQ$ satisfy \cref{asm:posets-souslin-order-borel}, so do $\posetP^\op$ and $\posetP \posetproduct \posetQ$.
\end{lemma}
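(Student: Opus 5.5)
We check the two parts of \cref{asm:posets-souslin-order-borel}, the Souslin property and Borelness of the order relation, for $\posetP^\op$ and for $\posetP \posetproduct \posetQ$ separately.

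\emph{The opposite poset.} $\posetP^\op$ has the same underlying topological space as $\posetP$, so it is Souslin. Its order relation is the image of the order of $\posetP$ under the swap $\perm_{1 \leftrightarrow 2} \colon \posetP \setproduct \posetP \to \posetP \setproduct \posetP$. The swap is a homeomorphism and is its own inverse, so this image is the pre-image of a Borel set under a continuous map. It is therefore Borel.

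\emph{The product poset: Souslin property.} We give $\posetP \posetproduct \posetQ$ the product topology. A product of two Hausdorff spaces is Hausdorff. If $g_P \colon X \to \posetP$ and $g_Q \colon Y \to \posetQ$ are continuous surjections from Polish spaces, then $g_P \setproduct g_Q \colon X \setproduct Y \to \posetP \setproduct \posetQ$ is a continuous surjection. Since $X \setproduct Y$ is Polish, $\posetP \posetproduct \posetQ$ is Souslin.

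\emph{The product poset: the order relation.} Consider the continuous map
\[
\pi \colon (\posetP\setproduct\posetQ)\setproduct(\posetP\setproduct\posetQ)\to(\posetP\setproduct\posetP)\setproduct(\posetQ\setproduct\posetQ)
\]
that permutes coordinates. The order relation of $\posetP \posetproduct \posetQ$ equals $\pi^{\inv}\big(\proj_1^{\inv}(\posetleqOf{\posetP}) \cap \proj_2^{\inv}(\posetleqOf{\posetQ})\big)$. Pre-images of Borel sets under continuous maps are Borel, and finite intersections of Borel sets are Borel. So it suffices that each cylinder $\proj_1^{\inv}(\posetleqOf{\posetP})$ is Borel in the product topology.

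\emph{Main obstacle.} The delicate point is that the Borel sigma algebra of a product topology can be strictly larger than the product of the Borel sigma algebras. Here this does not cause trouble. We only take pre-images of Borel sets under continuous projections, and such pre-images are Borel in the product topology, which is all the assumption requires. If a reader prefers the product sigma algebra, we would invoke the standard fact that for Souslin (hence second countable after refinement) spaces the Borel sigma algebra of the product is the product of the Borel sigma algebras \cite{bogachevMeasureTheory2007}; this also makes the product topology the appropriate one for \cref{thm:borel-maps-graphs-Souslin,thm:measurable-image-pre-image-Souslin} later on.
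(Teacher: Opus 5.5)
Your proof is correct and follows the paper's route: the swap $\perm_{1 \leftrightarrow 2}$ for the opposite order, closure of Souslin spaces under products, and a coordinate permutation of the product of the two Borel order relations. Rewriting the swap-image as a pre-image gives Borelness more directly than the paper's appeal to the analytic-image theorem. The one inaccurate point is the optional aside ``Souslin (hence second countable after refinement)'': Souslin spaces need not be second countable, and the standard reason the product Borel $\sigma$-algebras agree is that Souslin spaces are hereditarily Lindel\"of. Nothing in your argument depends on that aside.
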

\begin{proof}
    \rebuttal{We show the sets are Souslin and the order relations Borel.}
    Since $\setWithArg{\tup{\poselxP, \poselxP'}}{\poselxP \posetgeq \poselxP'} = \perm_{1 \leftrightarrow 2}(\setWithArg{\tup{\poselxP', \poselxP}}{\poselxP' \posetleq \poselxP})$ and permutations are \rebuttal{homeomorphisms}, \rebuttal{\cref{thm:measurable-image-pre-image-Souslin} applies and the opposite order set $\setWithArg{\tup{\poselxP, \poselxP'}}{\poselxP \posetgeq \poselxP'}$ is a Borel set.
    For the product \glsposet, product of two Souslin Spaces is Souslin
    , so we only need to prove that the set $\setWithArg{\tup{\poselxP, \poselxQ, \poselxP', \poselxQ'}}{\poselxP \posetleq \poselxP', \poselxQ \posetleq \poselxQ'}$ is a Borel set.}
    Observe that
    \begin{align*}
        & \setWithArg{\tup{\poselxP, \poselxQ, \poselxP', \poselxQ'}}{\poselxP \posetleq \poselxP', \poselxQ \posetleq \poselxQ'}
        \\
        =
        &
        \perm_{2 \leftrightarrow 3}( \setWithArg{\tup{\poselxP, \poselxP', \poselxQ, \poselxQ'}}{\poselxP \posetleq \poselxP', \poselxQ \posetleq \poselxQ'}) \\
        = 
        &
        \begin{multlined}[t][\dimexpr\linewidth-2em\relax]
            \perm_{2 \leftrightarrow 3}( \set{\poselxP \posetleq \poselxP'} 
            \setproduct \set{\poselxQ \posetleq \poselxQ'} ).
        \end{multlined}
    \end{align*}
    \rebuttal{Products of Borel sets are Borel and permutations are homeomorphisms, so the statements follow.}
\end{proof}

\begin{lemma}\label{lem:upper-sets-analytic}
Let~$\posetP$ satisfy \cref{asm:posets-souslin-order-borel}. If~$\SetA\subseteq \posetP$ is analytic, so are its closures~$\upperClosure{\SetA}$ and~$\lowerClosure{\SetA}$.
\end{lemma}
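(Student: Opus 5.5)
Write the upper closure as a projection: $\upperClosure{\SetA} = \proj_2\big( (\SetA \setproduct \posetP) \cap \setWithArg{\tup{\poselxP,\poselxP'}}{\poselxP \posetleq \poselxP'} \big)$. Indeed, $\poselxP' \in \upperClosure{\SetA}$ iff there is $\poselxP \in \SetA$ with $\poselxP \posetleq \poselxP'$, which is exactly the statement that some pair $\tup{\poselxP,\poselxP'}$ lies in the intersection. The plan is to show that the set inside the projection is analytic in the Souslin space $\posetP \setproduct \posetP$, and then to push it forward along the continuous projection.

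\textbf{Step 1.} Show that $\SetA \setproduct \posetP$ is analytic. It is the preimage $\proj_1^{\inv}(\SetA)$ under the projection $\proj_1 \colon \posetP \setproduct \posetP \to \posetP$. This projection is continuous, hence Borel measurable, and $\posetP \setproduct \posetP$ is Souslin because products of Souslin spaces are Souslin (as used in the proof of \cref{lem:posets-souslin-product}). \Cref{thm:measurable-image-pre-image-Souslin} then gives that $\proj_1^{\inv}(\SetA)$ is analytic.

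\textbf{Step 2.} The order relation is Borel by \cref{asm:posets-souslin-order-borel}, and every Borel set is analytic. Countable intersections of analytic sets are analytic, so $(\SetA \setproduct \posetP) \cap {\posetleq}$ is analytic.

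\textbf{Step 3.} Apply \cref{thm:measurable-image-pre-image-Souslin} once more, now to the image under the continuous map $\proj_2 \colon \posetP \setproduct \posetP \to \posetP$. This shows that $\upperClosure{\SetA}$ is analytic.

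\textbf{Lower closure.} The lower closure of $\SetA$ in $\posetP$ equals its upper closure in $\posetP^\op$. By \cref{lem:posets-souslin-product}, $\posetP^\op$ again satisfies \cref{asm:posets-souslin-order-borel}, with the same underlying topological space. Analyticity does not depend on the order, so the upper-closure argument applied to $\posetP^\op$ gives the result. Alternatively, one can repeat the argument with $\proj_1$ and $\proj_2$ swapped.

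The only delicate point is checking that the descriptive-set facts are used in their valid setting: that $\posetP \setproduct \posetP$ is Souslin, so that \cref{thm:measurable-image-pre-image-Souslin} applies, and that the intersection is taken only with a Borel set. Complements of analytic sets are never needed, so I expect no real obstacle beyond this bookkeeping.
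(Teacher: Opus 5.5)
Your proposal is correct and follows the paper's own proof: the paper also writes $\upperClosure{\SetA} = \proj_2(\proj_1^{\inv}(\SetA) \cap \SetO)$, where $\SetO$ is the Borel order relation. It then uses \cref{thm:measurable-image-pre-image-Souslin} together with closure of analytic sets under intersection, and handles $\lowerClosure{\SetA}$ by running the same argument on the opposite order.
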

\begin{proof}
Let~$\SetO = \setWithArg{\tup{\poselxP,\poselxP'}}{\poselxP \posetleq \poselxP'} \subseteq \posetP \setproduct \posetP$.
Then~$\upperClosure{\SetA} = \proj_2(\proj_1^\inv(\SetA) \cap \SetO)$ is analytic \rebuttal{by \cref{thm:measurable-image-pre-image-Souslin}, continuity of projections, and closure of analytic sets under finite intersection}; the same argument on the opposite order gives~$\lowerClosure{\SetA}$.
\end{proof}

\subsubsection{Quasi-universal space of design problems}

\rebuttal{We first define \glsplqus of \glspldp by choosing a family of random variables, the analogue of a sigma algebra, so distributions make sense and \cref{lem:qus-maps-lift-dists} applies.}

\begin{definition}[\Glsqus of \glspldp]\label{def:qus-of-dp}
Let \glsplposet~$\funPosetF$ and $\resPosetR$ satisfy \cref{asm:posets-souslin-order-borel}.
The \emph{\glsqus of \glspldp} from~$\funPosetF$ to~$\resPosetR$ is $\qusObjDP{\funPosetF}{\resPosetR}{\sampleSpaceOmega}$, where (i)~$\qusdpOf{\funPosetF}{\resPosetR}$ is the set of \glspldp~$\dprb \in \dpOf{\funPosetF}{\resPosetR}$ expressible as an upper closure~$\dprb = \upperClosure{\SetA}$ of an analytic~$\SetA \subseteq \funPosetF^\op \setproduct \resPosetR$;
(ii) a random variable~$\randomVarAlpha \colon \sampleSpaceOmega \to \qusdpOf{\funPosetF}{\resPosetR}$ lies in~$\qusSetExpInDP{\funPosetF}{\resPosetR}{\sampleSpaceOmega}$ if its \emph{\glsdefhere{\gls}{ofs}}
\begin{equation*}
\feasibleSetFOf{\randomVarAlpha} \defeq
\setWithArg{\tup{\outcomeomega,\poselxF,\poselxR}}{
                \tup{\poselxF,\poselxR} \in \randomVarAlpha(\outcomeomega)}
            \subseteq \sampleSpaceOmega \setproduct \funPosetF \setproduct \resPosetR
\end{equation*}
is analytic.
\end{definition}

\begin{lemma}
    $\qusObjDP{\funPosetF}{\resPosetR}{\sampleSpaceOmega}$ is a \glsqus.
\end{lemma}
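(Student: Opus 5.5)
To show that $\qusObjDP{\funPosetF}{\resPosetR}{\sampleSpaceOmega}$ is a \glsqus, we check the two axioms of \cref{def:qms-and-maps-and-sigalgs}: constant maps are random variables, and random variables are closed under precomposition with measurable $\mapVarphi \in \SetExp{\sampleSpaceOmega}{\sampleSpaceOmega}$. The Polish requirement of \cref{def:qus} concerns only $\sampleSpaceOmega$, which we fixed to be Polish, so nothing further is needed there. Throughout we use \cref{asm:posets-souslin-order-borel} and \cref{lem:posets-souslin-product}, which make $\funPosetF^\op \setproduct \resPosetR$, and hence $\sampleSpaceOmega \setproduct \funPosetF \setproduct \resPosetR$, Souslin.

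\emph{Constants.} For $\dprb \in \qusdpOf{\funPosetF}{\resPosetR}$ we have $\dprb = \upperClosure{\SetA}$ with $\SetA$ analytic. By \cref{lem:upper-sets-analytic}, applied in the poset $\funPosetF^\op \setproduct \resPosetR$, the set $\dprb$ is itself analytic. The outcome feasible set of $\const_{\dprb}$ is $\sampleSpaceOmega \setproduct \dprb = \proj_{2,3}^\inv(\dprb)$. This is a preimage of an analytic set under a continuous, hence Borel, map between Souslin spaces, so it is analytic by \cref{thm:measurable-image-pre-image-Souslin}. (Alternatively, it is a product of the analytic sets $\sampleSpaceOmega$ and $\dprb$.)

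\emph{Precomposition.} Let $\randomVarAlpha$ be a random variable and $\mapVarphi$ measurable. Then
\[
\feasibleSetFOf{\randomVarAlpha \mapFollowing \mapVarphi} = (\mapVarphi \setproduct \idmap_{\funPosetF \setproduct \resPosetR})^\inv(\feasibleSetFOf{\randomVarAlpha}).
\]
If $\mapVarphi$ is Borel measurable, then $\mapVarphi \setproduct \idmap$ is Borel between Souslin spaces, and \cref{thm:measurable-image-pre-image-Souslin} gives analyticity of the preimage.

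This step is the main obstacle. Measurable maps on $\sampleSpaceOmega$ are taken with respect to the completed Borel sigma algebra of \cref{def:qus}, so $\mapVarphi$ need only be universally or Lebesgue measurable, and then \cref{thm:measurable-image-pre-image-Souslin} does not apply directly. The first attempt is to read $\SetExp{\sampleSpaceOmega}{\sampleSpaceOmega}$ as Borel maps, which is standard for the \glsqus construction of \cite{forreQuasiMeasurableSpaces2021} where the induced sigma algebra is completed afterwards. Failing that, I would write a completion-measurable $\mapVarphi$ as agreeing with a Borel map off a null set and try to absorb the discrepancy. Pure analyticity is not robust to modification on arbitrary null sets, however, so this route may force the definition of \glsofs to be read up to the same completion.

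Finally, the values of $\randomVarAlpha \mapFollowing \mapVarphi$ lie in $\qusdpOf{\funPosetF}{\resPosetR}$ automatically, since they are values of $\randomVarAlpha$. So once the obstacle above is settled, both axioms hold and the lemma follows.
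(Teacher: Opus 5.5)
Your argument matches the paper's proof step for step. Constants are handled through \cref{lem:upper-sets-analytic}, which makes the outcome feasible set $\sampleSpaceOmega \setproduct \dprb$ analytic. Precomposition is handled by writing $\feasibleSetFOf{\randomVarAlpha \mapFollowing \mapVarphi}$ as the pre-image of $\feasibleSetFOf{\randomVarAlpha}$ under $\mapVarphi \setproduct \idmap$ and invoking \cref{thm:measurable-image-pre-image-Souslin}. The obstacle you flag is real, and the paper's proof does not address it. It calls $\tilde\mapVarphi$ ``measurable'' and applies \cref{thm:measurable-image-pre-image-Souslin}, which concerns Borel maps. So it tacitly adopts your first reading, even though \cref{def:qus} equips $\sampleSpaceOmega$ with the completed sigma algebra.

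Your fallback cannot succeed, because under the completion reading the statement is false. Take $\sampleSpaceOmega = \Reals$ and a co-analytic, non-Borel $C \subseteq \Reals$. Let $\mapVarphi$ send $C$ to $1$ and $\Reals \setminus C$ to $0$. Its pre-images all lie in $\set{\emptySet, C, \Reals \setminus C, \Reals}$, so $\mapVarphi$ is measurable for the completed sigma algebra. Set $\randomVarAlpha(1) = \funPosetF^\op \setproduct \resPosetR$ and $\randomVarAlpha(\outcomeomega) = \emptySet$ otherwise. Then $\feasibleSetFOf{\randomVarAlpha} = \set{1} \setproduct \funPosetF \setproduct \resPosetR$ is analytic. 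However, $\feasibleSetFOf{\randomVarAlpha \mapFollowing \mapVarphi} = C \setproduct \funPosetF \setproduct \resPosetR$ has section $C$ at any fixed $\tup{\poselxF, \poselxR}$. Sections of analytic sets are analytic, by pre-image under the continuous map $\outcomeomega \mapsto \tup{\outcomeomega, \poselxF, \poselxR}$. But $C$ is not analytic, since a set that is both analytic and co-analytic is Borel. So the second axiom genuinely fails, and no null-set surgery on $\mapVarphi$ changes that.

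Reading the outcome-feasible-set condition only up to completion does not rescue things either. It would break the projection steps (series, trace) in \cref{thm:qus-dp-compositional}, which rely on analyticity. The correct resolution is to commit to $\SetExp{\sampleSpaceOmega}{\sampleSpaceOmega}$ consisting of Borel maps, as in the quasi-Borel convention, and to use the completed sigma algebra only for events. Then $\mapVarphi \setproduct \idmap$ is Borel between Souslin spaces, and your proof, like the paper's, is complete.
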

\begin{proof}
As~$\sampleSpaceOmega$ is Polish, it suffices to check the two \glsqms axioms: (i) constant maps are random variables; (ii) random variables are closed under precomposition with measurable maps on~$\sampleSpaceOmega$.
For (i), let~$\dprb = \upperClosure{\SetA}$ with~$\SetA$ analytic in~$\funPosetF^\op \setproduct \resPosetR$.
By \cref{lem:upper-sets-analytic},~$\dprb$ is analytic, hence~$\feasibleSetFOf{\randomVarAlpha}
        =
        \setWithArg{\tup{\outcomeomega,\poselxF,\poselxR}}{\tup{\poselxF,\poselxR} \in \dprb}
        =
        \sampleSpaceOmega \setproduct \dprb$
is analytic.
Thus the constant map~$\randomVarAlpha(\outcomeomega) = \dprb$ lies in $\qusSetExpInDP{\funPosetF}{\resPosetR}{\sampleSpaceOmega}$.
For (ii), let~$\randomVarAlpha \in \qusSetExpInDP{\funPosetF}{\resPosetR}{\sampleSpaceOmega}$ and~$\mapVarphi \in \SetExp{\sampleSpaceOmega}{\sampleSpaceOmega}$ be measurable.
Define the measurable map~$\mapVarphitilde \colon
        \sampleSpaceOmega \setproduct \funPosetF \setproduct \resPosetR
        \to
        \sampleSpaceOmega \setproduct \funPosetF \setproduct \resPosetR,$~$
        \tup{\outcomeomega,\poselxF,\poselxR}
        \mapsto
        \tup{\mapVarphi(\outcomeomega),\poselxF,\poselxR}.$
Then:
\begin{equation*}
\begin{aligned}
        \feasibleSetFOf{\mapVarphi \mthenMathbin \randomVarAlpha}
        & =\setWithArg{\tup{\outcomeomega, \poselxF, \poselxR}}{\tup{\poselxF, \poselxR} \in \mapVarphi \mthenMathbin \randomVarAlpha(\outcomeomega)} \\
        &= \setWithArg{\tup{\outcomeomega, \poselxF, \poselxR}}{\tup{\poselxF, \poselxR} \in \randomVarAlpha(\mapVarphi(\outcomeomega))} \\
        &= \setWithArg{\tup{\outcomeomega, \poselxF, \poselxR}}{\tup{\mapVarphi(\outcomeomega), \poselxF, \poselxR} \in \feasibleSetFOf{\randomVarAlpha}} =\mapVarphitilde^\inv(\feasibleSetFOf{\randomVarAlpha}),
\end{aligned}
\end{equation*}
which is analytic as the pre-image of an analytic set under a measurable map\rebuttal{, according to \cref{thm:measurable-image-pre-image-Souslin}}.
\end{proof}

\rebuttal{
\begin{remark}\label{rmk:dist-f-r-as-dps}
    Stochasticity on \F{functionalities} such as \FI{demands}, or \R{resources} such as \RI{weather conditions}, is encoded by a \glsdp component carrying only the corresponding interface; the Task \glsdp of \cref{fig:uav-adaptive-dp-battery-tech-after-actuation} is an example.
\end{remark}
}
By construction,~$\qusObjDP{\funPosetF}{\resPosetR}{\sampleSpaceOmega}$ contains all constant \glspldp via constant random variables.
\rebuttal{\Cref{def:qus-of-dp} imposes the fewest conditions on \glsplposet and random variables: the former maximizes expressiveness, the latter unifies heterogeneous descriptions of components sharing an interface into one space.
For an actuator's \F{rotary speed}\slash \F{torque} trade-off, one manufacturer may report torque constant and winding resistance, another a parameterized \F{torque}--\F{speed} curve; each reduces to an analytic set as in \cref{def:qus-of-dp}, which is what makes decisions \emph{among} them expressible (\cref{subsec:imp-spec-adaptive-mdpi}).
}

\subsubsection{Compositionality of the QUS of design problems}

All co-design operations on \glspldp turn out to be quasi-measurable on~$\qusdpOf{\funPosetF}{\resPosetR}$, hence lift to distributions via \rebuttal{\cref{lem:qus-maps-lift-dists}}.

\begin{theorem}\label{thm:qus-dp-compositional}
    All composition operations of \cref{def:dp-compositions} (series, parallel, feedback, union, intersection) are quasi-measurable as maps between the corresponding \glsplqus of \glspldp.
    Consequently,
    these operations lift to distributions over \glspldp in $\QUS$.
    The lifted union~$\QUSDistriOf{\union}$ is the design result when one may choose the outcome to implement after observing the randomness; the lifted intersection $\QUSDistriOf{\intersection}$ applies when one must be robust to it.
\end{theorem}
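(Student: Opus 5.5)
The plan is to verify quasi-measurability directly from \cref{def:qus-of-dp} and \cref{def:product-qms}: random variables of a product of \glsplqus of \glspldp are tuples of random variables. So for each operation $\diamond$ and random variables $\randomVarAlphaa, \randomVarAlphab$ with analytic \glsplofs, I would show two things. First, the pointwise composite $\outcomeomega \mapsto \randomVarAlphaa(\outcomeomega) \diamond \randomVarAlphab(\outcomeomega)$ again has an analytic \gls{abk:ofs}. Second, it lands in $\qusdpOf{\cdot}{\cdot}$, i.e.\ each output is an upper closure of an analytic set. The lifting to distributions is then immediate from \cref{lem:qus-maps-lift-dists}. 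The interpretive sentences about $\QUSDistriOf{\union}$ and $\QUSDistriOf{\intersection}$ follow by reading the definitions pointwise in $\outcomeomega$.

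The core step is to express each \gls{abk:ofs} through the operations analytic sets are closed under: finite intersections, countable unions, continuous or Borel preimages, and continuous images (\cref{thm:measurable-image-pre-image-Souslin}). Throughout, all spaces are Souslin by \cref{lem:posets-souslin-product}, and $\sampleSpaceOmega$ is Polish.
\begin{itemize}
\item For \emph{union} and \emph{intersection}, $\feasibleSetFOf{\randomVarAlphaa \union \randomVarAlphab} = \feasibleSetFOf{\randomVarAlphaa} \cup \feasibleSetFOf{\randomVarAlphab}$, and similarly with $\cap$.
\item For \emph{parallel}, the \gls{abk:ofs} is the preimage of $\feasibleSetFOf{\randomVarAlphaa} \setproduct \feasibleSetFOf{\randomVarAlphab}$ under the continuous map $\tup{\outcomeomega, \poselxP, \poselxP', \poselxQ, \poselxQ'} \mapsto \tup{\outcomeomega, \poselxP, \poselxQ, \outcomeomega, \poselxP', \poselxQ'}$. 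The diagonal on $\outcomeomega$ is continuous, so the preimage is analytic.
\item For \emph{series}, the \gls{abk:ofs} is $\proj_{1,2,4}\big(\proj_{1,2,3}^\inv(\feasibleSetFOf{\randomVarAlphaa}) \cap \proj_{1,3,4}^\inv(\feasibleSetFOf{\randomVarAlphab})\big)$ inside $\sampleSpaceOmega \setproduct \posetP \setproduct \posetQ \setproduct \posetR$. This is the projection of an analytic set, hence analytic.
\item For \emph{trace}, I would intersect $\feasibleSetFOf{\randomVarAlpha}$ with the preimage of the diagonal of $\posetR$ under the projection onto the two $\posetR$-coordinates, then project away one copy. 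Since $\posetR$ is Hausdorff, its diagonal is closed, so the result is analytic.
\end{itemize}

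The step I expect to be the main obstacle is membership in $\qusdpOf{\cdot}{\cdot}$ for each fixed $\outcomeomega$. This needs $\randomVarAlpha(\outcomeomega) \diamond \randomVarAlphab(\outcomeomega)$ to be an upper set that is the upper closure of an analytic set. Upper-set-ness comes from \cref{def:dp-compositions}, since these are \glspldp. For analyticity I would take the fiber of the analytic \gls{abk:ofs} at $\outcomeomega$. This fiber is the preimage under the continuous inclusion $\tup{\poselxF, \poselxR} \mapsto \tup{\outcomeomega, \poselxF, \poselxR}$, so it is analytic by \cref{thm:measurable-image-pre-image-Souslin}. The output \gls{abk:dp} then equals its own upper closure, and being analytic it witnesses (i) of \cref{def:qus-of-dp}. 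A subtlety I would double-check is that the input random variables' values are analytic \glspldp. This holds because analytic sets are closed under upper closure (\cref{lem:upper-sets-analytic}), so every element of $\qusdpOf{\cdot}{\cdot}$ is itself analytic, and the fiber argument is consistent.

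Finally, I would note that the pointwise definitions already give genuine maps between the product \glsqus and the target \glsqus. Hence quasi-measurability is exactly the analyticity of the \glsplofs shown above, and \cref{lem:qus-maps-lift-dists} yields $\QUSDistriOf{\diamond}$ and $\QUSDistriOf{\trace}$.
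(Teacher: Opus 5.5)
Your proposal is correct and follows the paper's route. Both reduce quasi-measurability to analyticity of the outcome feasible set of the pointwise composite, via the same projection, pre-image and diagonal expressions. Your parallel case, a pre-image of $\feasibleSetFOf{\randomVarAlphaa} \setproduct \feasibleSetFOf{\randomVarAlphab}$ under a continuous map, is equivalent to the paper's $\proj_{1,2,4}^\inv(\cdot) \cap \proj_{1,3,5}^\inv(\cdot)$. Your fiber argument that each value lies in $\qusdpOf{\cdot}{\cdot}$ is a valid well-definedness check that the paper leaves implicit.

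One wording fix is needed in the trace case. After restricting to the diagonal you must project away \emph{both} $\posetR$-coordinates, not just one copy, to realize the existential over $\poselxRnoRes$; this is what the paper's $\proj_{1,2,4}$ does.
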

\begin{proof}
\rebuttal{
We show that post-composing any operation of \cref{def:dp-compositions} with random variables whose \glsplofs are analytic yields a random variable with analytic \glsofs.\\
\textbf{Series}: \rebuttal{Let~$\randomVarAlphaa \in \qusSetExpInDP{\posetP}{\posetQ}{\sampleSpaceOmega}$ and~$\randomVarAlphab \in \qusSetExpInDP{\posetQ}{\posetR}{\sampleSpaceOmega}$ with feasible sets~$
\feasibleSetFOf{\randomVarAlphaa}
$ and~$
\feasibleSetFOf{\randomVarAlphab}
$.
Post-composing with $\mthen$ gives}
    \begin{equation*}
        \defmapOneLine
        {\mthen \mapFollowing \tup{\randomVarAlphaa,\randomVarAlphab}}{\sampleSpaceOmega}{\qusdpOf{\posetP}{\posetR}}
        {\outcomeomega}
        {\randomVarAlphaa(\outcomeomega) \mthenMathbin \randomVarAlphab(\outcomeomega)}
    \end{equation*}
    with the \glsofs (with~$\star= \tup{\outcomeomega, \poselxP, \poselxRnoRes}$)
    \begin{align*}
        &
        \setWithArg{\star}{ \tup{\poselxP, \poselxRnoRes} \in \randomVarAlphaa(\outcomeomega) \mthenMathbin \randomVarAlphab(\outcomeomega)} \\
        =
        &
            \setWithArg{\star}{\exists \poselxQ \in \posetQ,
            \tup{\poselxP, \poselxQ} \in \randomVarAlphaa(\outcomeomega), \tup{\poselxQ, \poselxRnoRes} \in \randomVarAlphab(\outcomeomega)} \\
        =
        &
            \setWithArg{\star}{\exists \poselxQ \in \posetQ,
            \tup{\outcomeomega, \poselxP, \poselxQ} \in \feasibleSetFOf{\randomVarAlphaa}, \tup{\outcomeomega, \poselxQ, \poselxRnoRes} \in \feasibleSetFOf{\randomVarAlphab}}\\
        =
        &
        \begin{multlined}[t][\dimexpr\linewidth-2em\relax]
            \setWithArg{\star}{ \exists \poselxQ \in \posetQ,
            \tup{\outcomeomega, \poselxP, \poselxQ, \poselxRnoRes}
            \\
            \in \proj_{1,2,3}^\inv(\feasibleSetFOf{\randomVarAlphaa}),
            \tup{\outcomeomega, \poselxP, \poselxQ, \poselxRnoRes} \in \proj_{1,3,4}^\inv(\feasibleSetFOf{\randomVarAlphab})}
        \end{multlined} \\
        =
        &
        \proj_{1,2,4}\big( \proj_{1,2,3}^\inv(\feasibleSetFOf{\randomVarAlphaa}) \cap
        \proj_{1,3,4}^\inv(\feasibleSetFOf{\randomVarAlphab}) \big),
    \end{align*}
    \rebuttal{which is analytic since \cref{thm:measurable-image-pre-image-Souslin} applies (projections are continuous) and set intersection preserves analytic.}\\
    \textbf{Parallel}:
    \rebuttal{Let $\randomVarAlphaa \in \qusSetExpInDP{\posetP}{\posetQ}{\sampleSpaceOmega}$ and $\randomVarAlphaa' \in \qusSetExpInDP{\posetP'}{\posetQ'}{\sampleSpaceOmega}$, we similarly have the \glsofs of $\stack \mapFollowing \tup{\randomVarAlphaa,\randomVarAlphaa'}$ being analytic, with $\singletonEl = \tup{\outcomeomega, \poselxP, \poselxP', \poselxQ, \poselxQ'}$ and projection maps as above}:
    \begin{align*}
    &
    \begin{multlined}[t][\dimexpr\linewidth-2em\relax]
        \setWithArg{ \singletonEl }{ \tup{\tup{\poselxP, \poselxP'}, \tup{\poselxQ, \poselxQ'}} 
        \in \randomVarAlphaa(\outcomeomega) \stackMathbin \randomVarAlphaa'(\outcomeomega)}
    \end{multlined}
    \\
        =
        &
        \begin{multlined}[t][\dimexpr\linewidth-2em\relax]
            \setWithArg{ \singletonEl  }{ \tup{\outcomeomega, \poselxP, \poselxQ} \in \feasibleSetFOf{\randomVarAlphaa},
            \tup{\outcomeomega, \poselxP', \poselxQ'} \in \feasibleSetFOf{\randomVarAlphaa')} }
        \end{multlined}
        \\
        =
        &
        \proj_{1,2,4}^\inv(\feasibleSetFOf{\randomVarAlphaa}) \cap \proj_{1,3,5}^\inv(\feasibleSetFOf{\randomVarAlphaa'}).
    \end{align*}
    \rebuttal{\textbf{Feedback\slash Trace\slash Loop}:
    Take $\randomVarAlpha \in \qusSetExpInDP{\posetP \posetproduct \posetR}{\posetQ \posetproduct \posetR}{\sampleSpaceOmega}$ and the diagonal set $\SetD \defeq \setWithArg{\tup{\tilde\poselxRnoRes,\tilde\poselxRnoRes'}}{\tilde\poselxRnoRes = \tilde\poselxRnoRes'}$, $\trace \mapFollowing \randomVarAlpha$ has \glsofs}
    \begin{align*}
    & 
    \setWithArg{ \tup{\outcomeomega, \poselxP, \poselxQ} }{ \tup{\poselxP, \poselxQ} \in \trace(\randomVarAlpha(\outcomeomega)) } \\
    =
    &
    \setWithArg{\tup{\outcomeomega, \poselxP, \poselxQ}}{\exists \poselxRnoRes, \tup{\outcomeomega, \tup{\poselxP,\poselxRnoRes}, \tup{\poselxQ,\poselxRnoRes}} \in \feasibleSetFOf{\randomVarAlpha}} \\
    =
    &
    \begin{multlined}[t][\dimexpr\linewidth-2em\relax]
        \setWithArg{\tup{\outcomeomega, \poselxP, \poselxQ}}{\exists \poselxRnoRes, \poselxRnoRes',
        \tup{\outcomeomega, \poselxP,\poselxRnoRes, \poselxQ,\poselxRnoRes'} 
        \in \feasibleSetFOf{\randomVarAlpha},
        \\
        \tup{\outcomeomega, \poselxP,\poselxRnoRes, \poselxQ,\poselxRnoRes'}
        \in \setWithArg{\tup{\tilde\outcomeomega, \tilde\poselxP,\tilde\poselxRnoRes, \tilde\poselxQ,\tilde\poselxRnoRes'}}{\tilde\poselxRnoRes = \tilde\poselxRnoRes'} }
    \end{multlined} \\
    =
    &
    \proj_{1,2,4}(\feasibleSetFOf{\randomVarAlpha} \cap \proj_{3,5}^\inv(\SetD)).
    \end{align*}
    It is an analytic set, since the diagonal set $\SetD \subseteq \posetR \posetproduct \posetR$ is a Borel set (thus analytic) \rebuttal{and similar arguments as above apply}.\\
    \textbf{Union and Intersection}:
    For two random variables $\randomVarAlphaa, \randomVarAlphab \in \qusSetExpInDP{\posetP}{\posetQ}{\sampleSpaceOmega}$, post-composing union and intersection gives \glsplofs (with~$\star=\tup{\outcomeomega, \poselxP, \poselxQ}$)
    \begin{align*}
        &
            \setWithArg{\star}{
            \tup{\poselxP, \poselxQ} \in \randomVarAlphaa(\outcomeomega) \text{ or/and } \tup{\poselxP, \poselxQ} \in \randomVarAlphab(\outcomeomega)}\\
        =
        &
        \setWithArg{\star}{ \tup{\outcomeomega, \poselxP, \poselxQ} \in \feasibleSetFOf{\randomVarAlphaa} \cup / \cap \feasibleSetFOf{\randomVarAlphab}} \\
        =
        &
        \feasibleSetFOf{\randomVarAlphaa} \cup / \cap \feasibleSetFOf{\randomVarAlphab},
    \end{align*}
    both being analytic.
    Their semantics of choosing after observing the outcome, and being robust against two different outcomes, are self-explanatory in the expressions.
}
\end{proof}

\begin{remark}\label{rmk:qus-dp-enriched-category}
\rebuttal{\CTremark}
    With \glsplposet satisfying \cref{asm:posets-souslin-order-borel} as objects and \glsplqus of \glspldp as morphisms, \cref{thm:qus-dp-compositional} yields a symmetric monoidal category enriched in $\QUS$.
    \rebuttal{
    With additional structure from~\cite{forreQuasiMeasurableSpaces2021} it is even enriched in the Kleisli category of $\QUSDistri$; this abstraction is not needed here, but exemplifies \cite{furter2025composableuncertaintysymmetricmonoidal}, which systematically adds uncertainties to general categories.
    }
\end{remark}

\begin{remark}\label{rmk:composing-distri-dps}
    \rebuttal{\Cref{thm:qus-dp-compositional,lem:qus-maps-lift-dists} guarantee that the lifting can be performed, but are not constructive: they do not say how to compose two distributions, because of how product distributions are built in \glsplqus \cite{forreQuasiMeasurableSpaces2021}.}
    Composing $\tup{\distmu_a, \randomVarAlphaa} \in \QUSDistriOfDP{\posetP}{\posetQ}$ in series with $\tup{\distmu_b, \randomVarAlphab} \in \QUSDistriOfDP{\posetQ}{\posetR}$ requires
    \begin{multline*}
        \QUSDistriOfDP{\posetP}{\posetQ} \measureProduct \QUSDistriOfDP{\posetQ}{\posetR}
        \\
        \mapArrOf{\qusDistriProduct}
        \QUSDistriOf{\qusdpOf{\posetP}{\posetQ} \measureProduct \qusdpOf{\posetQ}{\posetR}}
        \\
        \mapArrOf{\QUSDistriOf{\mthen}}
        \QUSDistriOfDP{\posetP}{\posetR},
    \end{multline*}
    \rebuttal{i.e., $\tup{\distmu_a, \randomVarAlphaa} \measureProduct \tup{\distmu_b, \randomVarAlphab} \mapsto \tup{\distmu, \randomVarAlpha}$ and then $\tup{\distmu, \randomVarAlpha} \mapsto \tup{\distmu, \randomVarAlpha \mapFollowing \mthen}$,}
    \rebuttal{where $\tup{\distmu, \randomVarAlpha}$ induces on $\qusdpOf{\posetP}{\posetQ} \measureProduct \qusdpOf{\posetQ}{\posetR}$ the product of the two factors' measures; it exists by \cite[Theorem 5.29]{forreQuasiMeasurableSpaces2021} and resembles the usual product distribution.
    If the sample space decomposes as $\sampleSpaceOmega = \sampleSpaceOmega_a \measureProduct \sampleSpaceOmega_b$ with $\randomVarAlphaa(\tup{\outcomeomega_a, \outcomeomega_b}) = \randomVarAlpha_{a,a}(\outcomeomega_a)$ and $\randomVarAlphab(\tup{\outcomeomega_a, \outcomeomega_b}) = \randomVarAlpha_{b,b}(\outcomeomega_b)$, however, then $\tup{\distmu, \randomVarAlpha} = \tup{\distmu_{a,a} \measureProduct \distmu_{b,b}, \randomVarAlpha_{a,a} \measureProduct \randomVarAlpha_{b,b}}$ with $\distmu_{a,a}, \distmu_{b,b}$ the marginals.
    As $\sampleSpaceOmega$ is usually a product of all randomness sources, such decompositions are common.}
\end{remark}

\subsubsection{Feasibility evaluation and constant injection}
With all \I{implementations} fixed, the design result is a distribution $\tup{\distmu, \randomVarAlpha} \in \QUSDistriOfDP{\funPosetF}{\resPosetR}$, and one asks \emph{what practical questions it answers}; the feasibility evaluation turns out to be quasi-measurable.

\begin{theorem}\label{thm:countable-design-requirements-measurable}
    The evaluation map
    \begin{equation*}
        \defmap{\ev}
        {\qusdpOf{\funPosetF}{\resPosetR} \setproduct \SetExp{\funPosetF}{\Nats} \setproduct \SetExp{\resPosetR}{\Nats}}
        {\setOfBool}
        {\tup{\dprb, \set{\poselxFindex{i}}_{i \in \Nats}, \set{\poselxRindex{i}}_{i \in \Nats}}}
        {\left(
            \forall i \in \Nats, \tup{\poselxFindex{i}, \poselxRindex{i}} \in \dprb
        \right)}
    \end{equation*}
    is quasi-measurable.
\end{theorem}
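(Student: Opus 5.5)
To prove \cref{thm:countable-design-requirements-measurable}, I will check quasi-measurability directly from \cref{def:qms-and-maps-and-sigalgs}. Take an arbitrary random variable of the product, which by \cref{def:product-qms} is a tuple $\tup{\randomVarAlpha, \randomVarBeta_\funPosetF, \randomVarBeta_\resPosetR}$ with $\randomVarAlpha \in \qusSetExpInDP{\funPosetF}{\resPosetR}{\sampleSpaceOmega}$ and $\randomVarBeta_\funPosetF, \randomVarBeta_\resPosetR$ random variables of the sequence spaces $\SetExp{\funPosetF}{\Nats}$, $\SetExp{\resPosetR}{\Nats}$. I must show that the composite $\outcomeomega \mapsto \ev(\randomVarAlpha(\outcomeomega), \randomVarBeta_\funPosetF(\outcomeomega), \randomVarBeta_\resPosetR(\outcomeomega))$ is a random variable of $\setOfBool$, i.e.\ (taking $\setOfBool$ with all measurable maps as random variables) that the event $\SetE \defeq \setWithArg{\outcomeomega}{\forall i,\ \tup{\randomVarBeta_\funPosetF(\outcomeomega)_i, \randomVarBeta_\resPosetR(\outcomeomega)_i} \in \randomVarAlpha(\outcomeomega)}$ lies in the completed sigma algebra $\SigAlgOf{\sampleSpaceOmega}$ of \cref{def:qus}.

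First, I unpack the sequence random variables. $\Nats$ is discrete, so by \cref{def:qms-maps} (evaluating at the constant random variable $\const_i$ and $\mapVarphi = \idmap$) each coordinate $\randomVarBeta_{\funPosetF,i} \colon \outcomeomega \mapsto \randomVarBeta_\funPosetF(\outcomeomega)(i)$ is a random variable of $\funPosetF$, and likewise for $\resPosetR$. I will take the random variables of a Souslin poset to be the (universally) measurable maps $\sampleSpaceOmega \to \funPosetF$, as in the remark after \cref{def:qms-and-maps-and-sigalgs}. Then $\SetE = \bigcap_{i \in \Nats} \SetE_i$ with $\SetE_i \defeq \setWithArg{\outcomeomega}{\tup{\randomVarBeta_{\funPosetF,i}(\outcomeomega), \randomVarBeta_{\resPosetR,i}(\outcomeomega)} \in \randomVarAlpha(\outcomeomega)}$. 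Since a countable intersection of events is an event, it suffices to show each $\SetE_i \in \SigAlgOf{\sampleSpaceOmega}$.

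For a fixed $i$, consider $g_i \colon \sampleSpaceOmega \to \sampleSpaceOmega \setproduct \funPosetF \setproduct \resPosetR$, $\outcomeomega \mapsto \tup{\outcomeomega, \randomVarBeta_{\funPosetF,i}(\outcomeomega), \randomVarBeta_{\resPosetR,i}(\outcomeomega)}$. Then $\SetE_i = g_i^\inv(\feasibleSetFOf{\randomVarAlpha})$, where $\feasibleSetFOf{\randomVarAlpha}$ is the \glsofs, which is analytic by \cref{def:qus-of-dp}. If the coordinates are Borel, then $g_i$ is Borel, and by \cref{thm:measurable-image-pre-image-Souslin} the set $\SetE_i$ is analytic in the Polish space $\sampleSpaceOmega$. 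Analytic sets are universally measurable, hence $\SetE_i$ lies in the completed Borel sigma algebra. The countable intersection $\SetE$ is then analytic as well, and therefore measurable.

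I expect the main obstacle to be the case where the coordinates are only universally (not Borel) measurable. In that case \cref{thm:measurable-image-pre-image-Souslin} does not apply directly, and I also need the exact form of the random variables of $\SetExp{\funPosetF}{\Nats}$. To handle this, I will first try the standard fact that a universally measurable map into a Souslin space agrees with a Borel map off a $\distmu$-null set for each $\distmu$. Then $\SetE_i$ differs from an analytic set by a null set, for every $\distmu$, so it is $\distmu$-measurable for every $\distmu$ and hence universally measurable. This is exactly what membership in the completed sigma algebra requires. Alternatively, I could restrict the random variables of the posets to Borel maps, which avoids the issue altogether.
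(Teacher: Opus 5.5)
Your proposal is correct and takes essentially the same route as the paper. You write the pre-image of $\True$ as $\bigcap_{i \in \Nats}$ of pre-images of the analytic outcome feasible set $\feasibleSetFOf{\randomVarAlpha}$ under the measurable maps $\outcomeomega \mapsto \tup{\outcomeomega, \randomVarBeta_{\funPosetF,i}(\outcomeomega), \randomVarBeta_{\resPosetR,i}(\outcomeomega)}$, note that each is universally measurable, and close under countable intersections; the paper's proof does exactly this, without distinguishing Borel from merely universally measurable coordinates. Your a.e.\ Borel-modification argument does handle the latter case, but it is quicker to note that pre-images of universally measurable (in particular analytic) sets under universally measurable maps are universally measurable, by pushing each $\distmu$ forward.
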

\begin{proof}
\rebuttal{
    Consider a random variable $\randomVarAlpha$ for $\qusdpOf{\funPosetF}{\resPosetR} \setproduct \SetExp{\funPosetF}{\Nats} \setproduct \SetExp{\resPosetR}{\Nats}$, with $\randomVarAlpha_1 \in \SetExp{\qusdpOf{\funPosetF}{\resPosetR}}{\sampleSpaceOmega}$, $\randomVarAlpha_2 \in \SetExp{\left( \SetExp{\funPosetF}{\Nats} \right)}{\sampleSpaceOmega}$, and $\randomVarAlpha_3 \in \SetExp{\left( \SetExp{\resPosetR}{\Nats} \right)}{\sampleSpaceOmega}$ as its components.
    Post-composing with $\ev$ gives
    \begin{equation*}
        \defmapOneLine{\ev \mapFollowing \randomVarAlpha}
        {\sampleSpaceOmega}{\setOfBool}
        {\outcomeomega}{\left(
            \forall i, \tup{\outcomeomega, \randomVarAlpha_2(\outcomeomega)_i, \randomVarAlpha_3(\outcomeomega)_i} \in \feasibleSetFOf{\randomVarAlpha_1}
        \right).}
    \end{equation*}
    We need to prove this map lives in $\SetExp{\setOfBool}{\sampleSpaceOmega}$. Namely, the pre-image of $\True \in \setOfBool$, $\left(\ev \mapFollowing \randomVarAlpha\right)^\inv(\True)$, is universally measurable.
    For each index $i$, we know the map
    $
        \bar{\randomVarAlpha}_{2,3,i} \colon {\outcomeomega} \mapsto {\tup{\outcomeomega, \randomVarAlpha_2(\outcomeomega)_i, \randomVarAlpha_3(\outcomeomega)_i}}
    $
    is measurable, so the corresponding pre-image
    $
        \bar{\randomVarAlpha}_{2,3,i}^\inv(\feasibleSetFOf{\randomVarAlpha_1})
    $
    is universally measurable.
    $
        \left(\ev \mapFollowing \randomVarAlpha\right)^\inv(\True)
        =
        \bigcap_{i \in \Nats}\bar{\randomVarAlpha}_{2,3,i}^\inv(\feasibleSetFOf{\randomVarAlpha_1})
    $
    is then universally measurable since it is a countable intersection of universally measurable sets.
}
\end{proof}

\rebuttal{\Cref{thm:countable-design-requirements-measurable} thus lets us compute, from a distribution on \glspldp together with distributions over \F{functionalities} and \R{resources}, the probability that a countable family of functionality\slash resource pairs is feasible for a sampled \glsdp; constant distributions recover evaluation at fixed levels.}

\rebuttal{Finally, we show that constant maps are quasi-measurable, so distributions on \F{functionalities}\slash \R{resources} forms distributions on \glspldp, further supporting \cref{rmk:dist-f-r-as-dps}.}

\begin{lemma}\label{lem:sample-space-discrete-order}
The sample space~$\sampleSpaceOmega$, equipped with the discrete order $\outcomeomega \posetleq \outcomeomega' \iff \outcomeomega = \outcomeomega'$ satisfies \cref{asm:posets-souslin-order-borel}.
    We always assign this discrete order to $\sampleSpaceOmega$ without further notice.
\end{lemma}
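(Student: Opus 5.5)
We need to check two things: that $\sampleSpaceOmega$ is a Souslin space, and that the order relation $\setWithArg{\tup{\outcomeomega,\outcomeomega'}}{\outcomeomega \posetleq \outcomeomega'}$ is Borel in $\sampleSpaceOmega \setproduct \sampleSpaceOmega$. With the discrete order, this relation is exactly the diagonal $\SetD_\sampleSpaceOmega = \setWithArg{\tup{\outcomeomega,\outcomeomega}}{\outcomeomega \in \sampleSpaceOmega}$. Both claims should follow from $\sampleSpaceOmega$ being Polish, which we fixed as a standing choice for the sample space (\cref{def:qus}).

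\emph{Souslin.} A Polish space is Hausdorff, being metrizable. It is also the continuous image of a Polish space under its identity map, so by the definition of analytic sets it is analytic. Being an analytic Hausdorff space, it is Souslin.

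\emph{Borel order.} We show the diagonal is closed in the product topology. If $\outcomeomega \neq \outcomeomega'$, Hausdorffness gives disjoint neighborhoods $U \ni \outcomeomega$ and $V \ni \outcomeomega'$. Then $U \setproduct V$ is an open neighborhood of $\tup{\outcomeomega,\outcomeomega'}$ that misses the diagonal. Hence the complement of the diagonal is open, so the diagonal is closed and therefore Borel. The same fact was already used for the diagonal $\SetD$ in the trace case of \cref{thm:qus-dp-compositional}.

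The only point that needs any care is which Borel sigma algebra is meant on $\sampleSpaceOmega \setproduct \sampleSpaceOmega$. For second-countable spaces such as Polish ones, the Borel sigma algebra of the product topology coincides with the product of the Borel sigma algebras, so the closedness argument settles the question under either reading. Beyond that the argument is routine; I do not expect a genuine obstacle.
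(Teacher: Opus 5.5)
Your proof is correct and follows essentially the same route as the paper: $\Omega$ is Souslin because of its topology, and the discrete order relation is the diagonal, which is closed and hence Borel. The only difference is that you derive the Souslin property directly from Polishness (Hausdorff, and analytic as the identity image of itself), whereas the paper asserts that $\Omega$ is homeomorphic to $\Reals$; your version fits the paper's own definition of quasi-universal spaces better, since that definition only requires $\Omega$ to be Polish.
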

\begin{proof}
    By definition of \glsplqus, $\sampleSpaceOmega$ is homeomorphic to $\Reals$, hence Souslin.
    The diagonal set~$\set{\tup{\outcomeomega,\outcomeomega}} \subseteq \sampleSpaceOmega \setproduct \sampleSpaceOmega$ is closed, thus analytic and Borel, and coincides with the order relation under the discrete order.
\end{proof}

\begin{theorem}\label{thm:qus-dp-constant-injection}
    The constant injection map
    \begin{equation*}
        \defmapOneLine{\constinj}
        {\funPosetF \setproduct \resPosetR}{\qusdpOf{\funPosetF}{\resPosetR}}
        {\tup{\poselxF, \poselxR}}{\lowerOf{\poselxF} \posetproduct \upperOf{\poselxR}}
    \end{equation*}
    is quasi-measurable.
\end{theorem}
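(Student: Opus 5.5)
Two things have to be checked. First, $\constinj$ must actually land in $\qusdpOf{\funPosetF}{\resPosetR}$. Second, post-composing any random variable of $\funPosetF \setproduct \resPosetR$ with $\constinj$ must give a random variable whose \glsofs is analytic, mirroring the proof of \cref{thm:qus-dp-compositional}.

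For the first point, note that in $\funPosetF^\op \setproduct \resPosetR$ we have $\lowerOf{\poselxF} \posetproduct \upperOf{\poselxR} = \upperClosure{\set{\tup{\poselxF, \poselxR}}}$. A singleton in a Hausdorff (Souslin) space is closed, hence Borel and analytic. So $\constinj(\poselxF, \poselxR)$ is the upper closure of an analytic set and lies in $\qusdpOf{\funPosetF}{\resPosetR}$; by \cref{lem:upper-sets-analytic} it is itself analytic.

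For quasi-measurability, take a random variable $\tup{\randomVarAlpha_F, \randomVarAlpha_R}$ of the product (\cref{def:product-qms}). Here I take the random variables of a poset under \cref{asm:posets-souslin-order-borel} to be the (Borel) measurable maps from $\sampleSpaceOmega$, as in the reconnection to ordinary probability. The \glsofs of $\constinj \mapFollowing \tup{\randomVarAlpha_F, \randomVarAlpha_R}$ is
\begin{equation*}
\setWithArg{\tup{\outcomeomega, \poselxF, \poselxR}}{\poselxF \posetleq \randomVarAlpha_F(\outcomeomega),\ \randomVarAlpha_R(\outcomeomega) \posetleq \poselxR}
= \maph_F^\inv(\SetO_F) \cap \maph_R^\inv(\SetO_R),
\end{equation*}
with the following ingredients. $\SetO_F$ and $\SetO_R$ are the Borel order relations of $\funPosetF$ and $\resPosetR$. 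The map $\maph_F \colon \tup{\outcomeomega, \poselxF, \poselxR} \mapsto \tup{\poselxF, \randomVarAlpha_F(\outcomeomega)}$ sends $\sampleSpaceOmega \setproduct \funPosetF \setproduct \resPosetR$ to $\funPosetF \setproduct \funPosetF$. The map $\maph_R \colon \tup{\outcomeomega, \poselxF, \poselxR} \mapsto \tup{\randomVarAlpha_R(\outcomeomega), \poselxR}$ is defined analogously. Each of these maps is a product of measurable maps and projections, so it is Borel measurable between Souslin spaces (\cref{lem:sample-space-discrete-order,lem:posets-souslin-product}). By \cref{thm:measurable-image-pre-image-Souslin} the pre-images are analytic, and a finite intersection of analytic sets is analytic. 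This is exactly the analyticity condition of \cref{def:qus-of-dp}.

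The main obstacle is the regularity of $\randomVarAlpha_F$ and $\randomVarAlpha_R$. If random variables into $\funPosetF$ are only universally measurable rather than Borel, \cref{thm:measurable-image-pre-image-Souslin} does not directly apply. In that case I would first try to write the \glsofs as a projection of a graph intersection, for instance
\begin{equation*}
\proj_{1,3,4}\big(\mapGraphOf{\randomVarAlpha_F} \setproduct \resPosetR \cap \cdots\big),
\end{equation*}
and invoke \cref{thm:borel-maps-graphs-Souslin}. Failing that, I would modify $\randomVarAlpha_F$ and $\randomVarAlpha_R$ on null sets to obtain Borel versions; this requires the analytic condition to be understood modulo the completion.
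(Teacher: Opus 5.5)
Your proof is correct, and it reaches analyticity of the outcome feasible set by a more direct route than the paper. The paper rewrites that set as $\proj_{1,2}^\inv(\lowerClosure{\mapGraphOf{\randomVarAlphaa}}) \cap \proj_{1,3}^\inv(\upperClosure{\mapGraphOf{\randomVarAlphab}})$. It gets analytic graphs from \cref{thm:borel-maps-graphs-Souslin}. It then closes them in $\sampleSpaceOmega \posetproduct \funPosetF$ and $\sampleSpaceOmega \posetproduct \resPosetR$, using the discrete order on $\sampleSpaceOmega$ (\cref{lem:sample-space-discrete-order}) so that \cref{lem:upper-sets-analytic} keeps them analytic, and finally pulls back along projections. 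You instead pull back the Borel order relations directly along $\maph_F = \tup{\proj_2, \randomVarAlpha_F \mapFollowing \proj_1}$ and $\maph_R$, which never leaves the Borel class. You need no graph theorem and no closure lemma (hence no projection of a Borel set), and you in fact show the outcome feasible set is Borel, which is stronger than required. The only fact you use tacitly is that a tuple of Borel maps into $\funPosetF \setproduct \funPosetF$ is Borel; this holds because the Borel sigma algebra of a product of Souslin spaces is the product sigma algebra. The paper's route buys uniformity with the graph/closure/pre-image calculus used throughout the section, not extra generality.

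On your ``main obstacle'': the paper silently makes the same Borel assumption, since it applies \cref{thm:borel-maps-graphs-Souslin} to the graphs of $\randomVarAlphaa, \randomVarAlphab$. Your fallbacks would not remove it. By the converse half of that theorem, an analytic graph already forces Borel measurability. Modifying on null sets clashes with \cref{def:qus-of-dp}, which demands analyticity outright and independently of any measure. The hypothesis is in fact necessary. Take $\funPosetF = \Reals$ with the discrete order and a singleton $\resPosetR$: the outcome feasible set is then a copy of the graph of $\randomVarAlpha_F$. So a universally measurable non-Borel $\randomVarAlpha_F$ (e.g.\ the indicator of an analytic non-Borel set) would falsify the statement. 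Under the convention the paper relies on, your argument is complete.
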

\begin{proof}
    $\set{\tup{\poselxF, \poselxR}}$ is an analytic set in $\funPosetF^\op \posetproduct \resPosetR$, so \rebuttal{$\constinj$ is well-defined since $\lowerOf{\poselxF} \posetproduct \upperOf{\poselxR} = \upperClosure{\set{\tup{\poselxF, \poselxR}}}$ in $\funPosetF^\op \posetproduct \resPosetR$ and \cref{lem:upper-sets-analytic} applies}.
    Pick two random variables $\randomVarAlphaa \in \SetExp{\funPosetF}{\sampleSpaceOmega}$ and $\randomVarAlphab \in \SetExp{\resPosetR}{\sampleSpaceOmega}$.
    We can write the \glsofs $\feasibleSetFOf{\constinj \mapFollowing \tup{\randomVarAlphaa, \randomVarAlphab}}$ as
    \begin{align*}
        &
        \setWithArg{\tup{\outcomeomega, \poselxF, \poselxR}}{
            \poselxF \posetleq \randomVarAlphaa(\outcomeomega), \randomVarAlphab(\outcomeomega) \posetleq \poselxR
        }
        \\
        =
        &
        \begin{multlined}[t]
        \setWithArg{\tup{\outcomeomega, \poselxF, \poselxR}}{
            \tup{\outcomeomega, \poselxF} \in \lowerClosure{\mapGraphOf{\randomVarAlphaa}}, \tup{\outcomeomega, \poselxR} \in \upperClosure{\mapGraphOf{\randomVarAlphab}}
        }
        \end{multlined}
        \\
        =
        &
        \proj_{1,2}^\inv(\lowerClosure{\mapGraphOf{\randomVarAlphaa}}) \cap \proj_{1,3}^\inv(\upperClosure{\mapGraphOf{\randomVarAlphab}})
    \end{align*}
    which is analytic \rebuttal{by \cref{thm:borel-maps-graphs-Souslin,thm:measurable-image-pre-image-Souslin} and set intersection preserving analytic}.
\end{proof}

\subsubsection{Confidence bounds on distributions over design problems}
Distributions over \glspldp \rebuttal{\xspace induce} \emph{confidence bounds}, generalizing interval uncertainty.
As \glspldp are rarely totally ordered, we distinguish levels for sampling \emph{inside} a bound from \emph{outside}.

\begin{definition}[Inner confidence bounds]\label{def:inner-confident-bounds}
\rebuttal{Let~$\tup{\distmu,\randomVarAlpha} \in \QUSDistriOfDP{\funPosetF}{\resPosetR}$ and~$p \in \interval{0}{1}$.
An interval~$\interval{\dprbL}{\dprbU}$ of \glspldp is an \emph{inner confidence bound} of level~$p$ if some universally measurable $\subsetX \subseteq \sampleSpaceOmega$ satisfies $\subsetX \subseteq \randomVarAlpha^\inv(\interval{\dprbL}{\dprbU})$ and $\distmu(\subsetX) \geq p$.
If both the set inclusion and the inequality hold with equality}, we call the bound \emph{strict}.
\end{definition}

\begin{definition}[Outer confidence bounds]\label{def:outer-confident-bounds}
Let~$\tup{\distmu,\randomVarAlpha} \in \QUSDistriOfDP{\funPosetF}{\resPosetR}$ and~$p,q \in \interval{0}{1}$.
\rebuttal{An interval~$\interval{\dprbL}{\dprbU}$ of \glspldp is an \emph{outer confidence bound} of level~$p,q$ if some universally measurable $\subsetX, \subsetY \subseteq \sampleSpaceOmega$ satisfy
$\subsetX \supseteq \randomVarAlpha^\inv(\interval{\emptySet}{\dprbL})$ and $\distmu(\subsetX) \leq p$; $\subsetY \supseteq \randomVarAlpha^\inv(\interval{\dprbU}{\funPosetF^\op \posetproduct \resPosetR})$ and $\distmu(\subsetY) \leq p$.
If all inclusions and inequalities are equalities}, the bound is \emph{strict}.
\end{definition}

\rebuttal{Informally, an inner bound keeps the design result between the bounds with probability \emph{at least} $p$, an outer bound keeps the probabilities of results worse or better than the boundaries \emph{at most} $p$ and $q$; both are strict when exact.}

\begin{remark}
\rebuttal{Confidence bounds need not exist for a given~$\tup{\distmu,\randomVarAlpha}$ and, when they do, are generally not unique even among strict ones; under extra structure, e.g.\ ordered~$\sampleSpaceOmega$ and monotone~$\randomVarAlpha$, they can often be built from subsets of~$\sampleSpaceOmega$ such as intervals.
Although intervals of \glspldp always compose by composing their bounds (\cref{lem:interval-lifted-ops}), the result need not be a valid confidence bound, since the lifted operations of \cref{thm:qus-dp-compositional} are non-constructive; when it is valid, monotonicity implies the level can only deteriorate.
Inner bounds compose whenever the decomposition of \cref{rmk:composing-distri-dps} holds; outer bounds are future work.}
\end{remark}

\rebuttal{
\begin{lemma}\label{lem:adjoint-map-and-inv}
    For each map $\mapf \colon \SetA \mapArr \SetB$ and subsets $\subsetXA \subseteq \SetA$, $\subsetYB \subseteq \SetB$, we have $\mapf(\mapf^\inv(\subsetYB)) \subseteq \subsetYB$ and $\subsetXA \subseteq \mapf^\inv(\mapf(\subsetXA))$.
\end{lemma}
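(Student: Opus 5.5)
The plan is a direct element-chase from the definitions of image and pre-image given in \cref{subsubsec:sets-functions}; no measurability or order structure is involved, so neither \cref{asm:posets-souslin-order-borel} nor the Souslin results are needed. The two inclusions are independent, so I would prove them separately, each by taking an arbitrary element of the left-hand side and showing it lies in the right-hand side.

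For the first inclusion, I would take $\setelmB \in \mapf(\mapf^\inv(\subsetYB))$. By the definition of image there is some $\setelmA \in \mapf^\inv(\subsetYB)$ with $\mapf(\setelmA) = \setelmB$. By the definition of pre-image, $\mapf(\setelmA) \in \subsetYB$. Hence $\setelmB \in \subsetYB$.

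For the second inclusion, I would take $\setelmA \in \subsetXA$. Then $\mapf(\setelmA) \in \mapf(\subsetXA)$, witnessed by $\setelmA$ itself in the definition of image. By the definition of pre-image, this gives $\setelmA \in \mapf^\inv(\mapf(\subsetXA))$.

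There is no real obstacle. The only point worth flagging in the write-up is that both inclusions can be strict: the first when $\mapf$ is not surjective onto $\subsetYB$, the second when $\mapf$ is not injective. This strictness is presumably why the lemma is later used to get only one-sided containments, for instance $\subsetX \subseteq \randomVarAlpha^\inv(\randomVarAlpha(\subsetX))$, when composing inner confidence bounds in \cref{lem:composing-inner-bounds}.
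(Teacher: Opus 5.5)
Your proof is correct and matches the paper's argument: both inclusions are proved by the same direct element chase from the definitions of image and pre-image. One minor aside: in \cref{lem:composing-inner-bounds} the lemma is applied to the projections $\proj_i$ and to the composition map $f$, not to $\alpha$, but this does not affect the proof.
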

\begin{proof}
    For each $\setelmB$ in $\mapf(\mapf^\inv(\subsetYB))$, we know there exist $\setelmA$ in $\mapf^\inv(\subsetYB)$ so that $\mapf(\setelmA) = \setelmB$, directly giving $\setelmB \in \subsetYB$ by the definition of $\mapf^\inv(\subsetYB)$.
    For each $\setelmA$ in $\subsetXA$, by the definition of $\mapf(\subsetXA)$ we have: $\mapf(\setelmA) \in \mapf(\subsetXA)$.
    Then by the definition of $\mapf^\inv(\mapf(\subsetXA))$, $\setelmA \in \mapf^\inv(\mapf(\subsetXA))$.
\end{proof}
}

\begin{lemma}\label{lem:composing-inner-bounds}
    Suppose we have a diagram of \glspldp \xspace \rebuttal{(for instance \cref{fig:uav-dp-composition} in \cref{sec:numerical-example})}, with $\mapf$ being the deterministic map taking the component \glspldp and returning the composed \glsdp.
    Denote the distributions over \glspldp for each component as $\tup{\distmu_i, \randomVarAlpha_i}$, indexed by a finite set $\SetI$.
    Suppose we have a decomposition of the sample space, namely: $\sampleSpaceOmega = \measureProduct_{i \in I} \sampleSpaceOmega_i$, and $\randomVarAlpha_i(\tup{\outcomeomega_i}_{i \in \SetI}) = \randomVarAlpha_{i,i}(\outcomeomega_i)$.
    Given an inner confident bound $\interval{\dprb_{\lowerBoundfix, i}}{\dprb_{\upperBoundfix, i}}$ for each distribution, together with $\subsetX_i \subseteq \sampleSpaceOmega$ and $p_i$ satisfying conditions in \cref{def:inner-confident-bounds}, the interval composition
    \begin{equation*}
        \IntervalOf{\mapf}(\tup{\interval{\dprb_{\lowerBoundfix,i}}{\dprb_{\upperBoundfix,i}}}_{i \in \SetI}) = \interval{\mapf(\tup{\dprb_{\lowerBoundfix,i}})}{\mapf(\tup{\dprb_{\upperBoundfix,i}})}
    \end{equation*}
    is an inner confidence bound of level \rebuttal{$\Pi_{i \in \SetI} p_i$} for the composed distribution.
\end{lemma}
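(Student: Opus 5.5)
The plan is to build an explicit witness set for the composed distribution out of the componentwise witnesses, using the product decomposition of $\sampleSpaceOmega$. First I fix what the composed distribution is. By \cref{rmk:composing-distri-dps}, under the stated decomposition the composed distribution is $\tup{\distmu, \mapf \mapFollowing \tup{\randomVarAlpha_i}_{i\in\SetI}}$ with $\distmu = \measureProduct_{i\in\SetI}\distmu_{i,i}$, where $\distmu_{i,i}$ is the marginal of $\distmu_i$ on $\sampleSpaceOmega_i$. Since $\randomVarAlpha_i$ depends only on $\outcomeomega_i$, I will first transport each component's witness to the factor $\sampleSpaceOmega_i$. Concretely, I would take $\subsetX_i' \defeq \proj_i(\subsetX_i)$, or more safely the section-based set $\setWithArg{\outcomeomega_i}{\randomVarAlpha_{i,i}(\outcomeomega_i)\in\interval{\dprb_{\lowerBoundfix,i}}{\dprb_{\upperBoundfix,i}}}$ intersected appropriately. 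I then check $\subsetX_i' \subseteq \randomVarAlpha_{i,i}^\inv(\interval{\dprb_{\lowerBoundfix,i}}{\dprb_{\upperBoundfix,i}})$ and $\distmu_{i,i}(\subsetX_i')\geq p_i$. Here \cref{lem:adjoint-map-and-inv} gives $\subsetX_i \subseteq \proj_i^\inv(\proj_i(\subsetX_i))$, so $p_i \le \distmu_i(\subsetX_i) \le \distmu_{i,i}(\proj_i(\subsetX_i))$. The inclusion into the preimage follows because $\randomVarAlpha_i$ factors through $\proj_i$.

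Next I define the witness $\subsetX \defeq \measureProduct_{i\in\SetI}\subsetX_i' \subseteq \sampleSpaceOmega$. Its measure is $\distmu(\subsetX) = \Pi_{i}\distmu_{i,i}(\subsetX_i') \geq \Pi_i p_i$ by the definition of the product measure; this step extends to universally measurable rectangles via completion. For the inclusion, I take any $\outcomeomega = \tup{\outcomeomega_i}_i \in \subsetX$. Each component satisfies $\dprb_{\lowerBoundfix,i}\posetleq \randomVarAlpha_{i,i}(\outcomeomega_i)\posetleq \dprb_{\upperBoundfix,i}$. Since $\mapf$ is built from the operations of \cref{def:dp-compositions}, all of which are monotone for inclusion (the monotonicity discussed after \cref{lem:interval-lifted-ops}), $\mapf$ is monotone in each argument. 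Hence $\mapf(\tup{\dprb_{\lowerBoundfix,i}})\posetleq \mapf(\tup{\randomVarAlpha_i(\outcomeomega)})\posetleq \mapf(\tup{\dprb_{\upperBoundfix,i}})$, i.e.\ $\outcomeomega \in (\mapf\mapFollowing\tup{\randomVarAlpha_i})^\inv$ of the composed interval. This settles \cref{def:inner-confident-bounds} at level $\Pi_i p_i$.

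The main obstacle is measurability of the transported witnesses. The projection $\proj_i(\subsetX_i)$ of a universally measurable set need not be universally measurable, only analytic-ish when $\subsetX_i$ is analytic. My first attempt avoids projections altogether. By Fubini on the completed product, the set of $\outcomeomega_i$ whose section of $\subsetX_i$ has positive measure under the other marginals is universally measurable. Every such $\outcomeomega_i$ lies in the preimage, because $\randomVarAlpha_i$ ignores the other coordinates. The measure of this set is still at least $\distmu_i(\subsetX_i)\geq p_i$, since the section integral is bounded by $1$. A second point to check is that $\distmu_i$ indeed equals the product of the marginals. This is exactly the content of the assumed decomposition in \cref{rmk:composing-distri-dps}, so I would invoke it rather than reprove it.
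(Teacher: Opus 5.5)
Your main line is the paper's proof. It uses the witness $\measureProduct_i \proj_i(\subsetX_i)$, and \cref{lem:adjoint-map-and-inv} together with the factorization of $\randomVarAlpha_i$ through $\proj_i$ to get $\distmu_{i,i}(\proj_i(\subsetX_i)) \geq \distmu_i(\subsetX_i) \geq p_i$ and the inclusion into $\randomVarAlpha_{i,i}^\inv(\interval{\dprb_{\lowerBoundfix,i}}{\dprb_{\upperBoundfix,i}})$. It then uses monotonicity of $\mapf$; the paper phrases this as $\mapf(\setproduct_i \interval{\dprb_{\lowerBoundfix,i}}{\dprb_{\upperBoundfix,i}})$ lying in the composed interval and pulls back along pre-images, which is your pointwise argument. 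Finally it uses multiplicativity of the product measure. The paper takes $\proj_i(\subsetX_i)$ as the witness without checking that it is universally measurable, so the issue you raise is genuine and goes beyond the paper.

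The step that would fail is your repair of that issue. Consider the Fubini identity $\distmu_i(\subsetX_i) = \int \nu\big((\subsetX_i)_{\outcomeomega_i}\big)\, \distmu_{i,i}(d\outcomeomega_i)$, with $\nu$ the product of the other marginals. It requires $\distmu_i$ itself to be the product of its marginals, and neither the lemma nor \cref{rmk:composing-distri-dps} provides this. The decomposition hypothesis constrains the random variables $\randomVarAlpha_i$ (each depends on $\outcomeomega_i$ only), not the measures $\distmu_i$.

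For a counterexample, take $\sampleSpaceOmega = [0,1]^2$, $\distmu_1$ uniform on the diagonal, $\randomVarAlpha_{1,1}$ constant with value in the given interval, and $\subsetX_1$ the diagonal, so $p_1 = 1$. Every section of $\subsetX_1$ is a singleton, hence null for the Lebesgue marginal on the other factor, and your witness is empty. Also, for a merely universally measurable $\subsetX_i$, completed Fubini only gives measurability for the completion of $\distmu_{i,i}$, not universal measurability.

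A simpler correct repair works as follows. Since $\subsetX_i$ lies in the $\distmu_i$-completion, pick a Borel $B_i \subseteq \subsetX_i$ with $\distmu_i(B_i) = \distmu_i(\subsetX_i)$. Then $\proj_i(B_i)$ is analytic, hence universally measurable. It is still contained in $\randomVarAlpha_{i,i}^\inv(\interval{\dprb_{\lowerBoundfix,i}}{\dprb_{\upperBoundfix,i}})$, and $\distmu_{i,i}(\proj_i(B_i)) = \distmu_i(\proj_i^\inv(\proj_i(B_i))) \geq \distmu_i(B_i) \geq p_i$. The rest of your argument goes through unchanged with the witness $\measureProduct_i \proj_i(B_i)$. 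If you want to keep a section-based argument, disintegrate $\distmu_i$ over $\sampleSpaceOmega_i$ instead of using the product of the other marginals.
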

\begin{proof}
\rebuttal{
    The composed distribution reads: $\tup{\measureProduct_i \distmu_{i,i}, \mapf \mapFollowing \measureProduct_i \randomVarAlpha_{i,i}}$. 
    Let $\subsetX_{i,i} \defeq \proj_i(\subsetX_i)$.
    We will prove the set $\measureProduct_i \subsetX_{i,i} \subseteq \sampleSpaceOmega$ satisfies conditions in \cref{def:inner-confident-bounds}.
    \rebuttal{We have $\proj_i^\inv(\subsetX_{i,i}) \supseteq \subsetX_i$ by \cref{lem:adjoint-map-and-inv}}.
    First observe: 
    $$
        \randomVarAlpha_i^\inv(\interval{\dprb_{\lowerBoundfix, i}}{\dprb_{\upperBoundfix, i}}) = \proj_i^\inv(\randomVarAlpha_{i,i}^\inv(\interval{\dprb_{\lowerBoundfix, i}}{\dprb_{\upperBoundfix, i}}))$$
    and since $\subsetX_i \subseteq \randomVarAlpha_i^\inv(\interval{\dprb_{\lowerBoundfix, i}}{\dprb_{\upperBoundfix, i}})$,
    \rebuttal{
    \begin{multline*}
        \subsetX_{i,i}
        = \proj_i(\subsetX_i)
        \subseteq \proj_i(\randomVarAlpha_i^\inv(\interval{\dprb_{\lowerBoundfix, i}}{\dprb_{\upperBoundfix, i}}))
        \\
        = \proj_i(\proj_i^\inv(\randomVarAlpha_{i,i}^\inv(\interval{\dprb_{\lowerBoundfix, i}}{\dprb_{\upperBoundfix, i}}))).
    \end{multline*}
    With \cref{lem:adjoint-map-and-inv} we further have
    $$\proj_i(\proj_i^\inv(\randomVarAlpha_{i,i}^\inv(\interval{\dprb_{\lowerBoundfix, i}}{\dprb_{\upperBoundfix, i}})))
    \subseteq
    \randomVarAlpha_{i,i}^\inv(\interval{\dprb_{\lowerBoundfix, i}}{\dprb_{\upperBoundfix, i}}),
    $$
    thus
    $\subsetX_{i,i} \subseteq \randomVarAlpha_{i,i}^\inv(\interval{\dprb_{\lowerBoundfix, i}}{\dprb_{\upperBoundfix, i}})$.}
    
    Since all the compositional operations are monotone, we have
    \begin{equation*}
        \mapf(\setproduct_i \interval{\dprb_{\lowerBoundfix,i}}{\dprb_{\upperBoundfix,i}}) \subseteq \interval{\mapf(\tup{\dprb_{\lowerBoundfix,i}})}{\mapf(\tup{\dprb_{\upperBoundfix,i}})},
    \end{equation*}
    thus (first applying \cref{lem:adjoint-map-and-inv})
    \begin{multline*}
        \setproduct_i \interval{\dprb_{\lowerBoundfix,i}}{\dprb_{\upperBoundfix,i}} \subseteq \mapf^\inv(\mapf(\setproduct_i \interval{\dprb_{\lowerBoundfix,i}}{\dprb_{\upperBoundfix,i}})) \\
        \subseteq \mapf^\inv(\interval{\mapf(\tup{\dprb_{\lowerBoundfix,i}})}{\mapf(\tup{\dprb_{\upperBoundfix,i}})}),
    \end{multline*}
    and then
    \begin{multline*}
        \setproduct_i X_{i,i}
        \subseteq \setproduct_i \randomVarAlpha_{i,i}^\inv(\interval{\dprb_{\lowerBoundfix,i}}{\dprb_{\upperBoundfix,i}}) \\
        = \left(\measureProduct_i \randomVarAlpha_{i,i}\right)^\inv(\setproduct_i \interval{\dprb_{\lowerBoundfix,i}}{\dprb_{\upperBoundfix,i}}) \\
        \subseteq \left(\measureProduct_i \randomVarAlpha_{i,i}\right)^\inv\left( \mapf^\inv(\interval{\mapf(\tup{\dprb_{\lowerBoundfix,i}})}{\mapf(\tup{\dprb_{\upperBoundfix,i}})}) \right) \\
        = \left(\mapf \mapFollowing \measureProduct_i \randomVarAlpha_{i,i}\right)^\inv\left( \interval{\mapf(\tup{\dprb_{\lowerBoundfix,i}})}{\mapf(\tup{\dprb_{\upperBoundfix,i}})} \right).
    \end{multline*}
    With \rebuttal{
    \begin{align*}
        \distmu_{i,i}(\subsetX_{i,i}) 
        &
        = \distmu_i(\proj_i^\inv(\subsetX_{i,i})) \geq \distmu_i(\subsetX_i) \geq p_i,
        \\
        \measureProduct_i \distmu_{i,i}(\setproduct_i \subsetX_{i,i})
        &
        = \distmu_{1,1}(\subsetX_{1,1}) \times \distmu_{2,2}(\subsetX_{2,2}) \cdots \geq \Pi_{i\in I} p_i,
    \end{align*}
    }
    $\interval{\mapf(\tup{\dprb_{\lowerBoundfix,i}})}{\mapf(\tup{\dprb_{\upperBoundfix,i}})}$ is an inner bound of level $\Pi_{i\in I} p_i$.
}
\end{proof}

\subsection{Implementations, specifications, and adaptive decisions}\label{subsec:imp-spec-adaptive-mdpi}

In deterministic co-design \glsplmdpi are design processes: choosing \I{implementations} yields deterministic \glspldp (\cref{def:mdpi}).
\rebuttal{
\Cref{def:qus-of-dp} lets us extend this to \emph{\glsplmdpi with distributional uncertainty}, where design choices yield distributions on \glspldp and adaptivity is re-parameterization by Markov kernels in \glsqus (\cref{def:markov-kernels-qus}).
}

\begin{definition}[\Glsplmdpi with distributional uncertainty]\label{def:distri-mdpi}
Let $\funPosetF, \resPosetR$ satisfy \cref{asm:posets-souslin-order-borel}.
An \emph{\glsmdpi with distributional uncertainty} is a tuple~$\tup{\specSetS,\impSetI,\measureKernela}$ with~$\specSetS, \impSetI$ \glsplqus of system \Sp{specifications} and \I{implementations} and~$\measureKernela$ a Markov kernel~$\measureKernela \colon \specSetS \measureProduct \impSetI \MarkovArr \qusdpOf{\funPosetF}{\resPosetR}$, i.e.\ a quasi-measurable map~$\specSetS \measureProduct \impSetI \to \QUSDistriOf{\qusdpOf{\funPosetF}{\resPosetR}}$.
\end{definition}

\rebuttal{For a one-shot (non-adaptive) \glsuav design, $\impSetI$ holds all design decisions and $\specSetS$ the specifications known \emph{before} deciding: $\impi$ is e.g.\ a combination of sensors, actuators, body, and algorithms, $\Sp{s}$ e.g.\ sensing distances and mean\slash variance of actuator efficiency, and $\measureKernela(\impi, \Sp{s})$ the induced distribution on design results.}
Although all uncertainty could be folded into~$\sampleSpaceOmega$, keeping \Sp{specifications} explicit is what lets us \emph{re-parameterize} design processes and express multi-stage adaptive decisions.

\begin{definition}[Re-parameterization of distributional uncertain \glsplmdpi]\label{def:repara-distri-mdpi}
Given a distributional uncertain \glsmdpi $\tup{\specSetS, \impSetI, \measureKernela}$, re-parameterizing it with another Markov kernel $\measureKernelf \colon \specSetS' \measureProduct \impSetI' \MarkovArr \specSetS \measureProduct \impSetI$ gives distributional uncertain \glsmdpi $\tup{\specSetS', \impSetI', \measureKernela \QUSMarkovFollowing \measureKernelf}$.
\end{definition}

\begin{figure}[tb]
    \centering
    \includegraphics[width=1.0\linewidth]{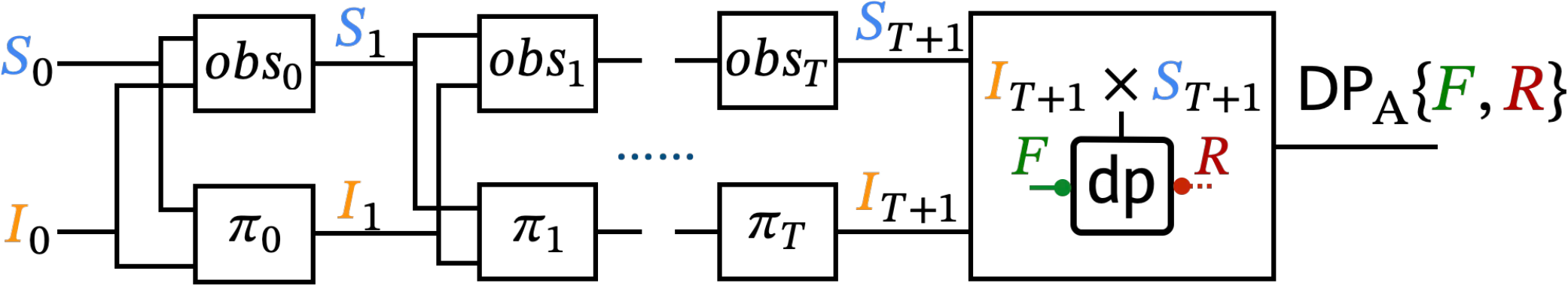}
    \caption{Diagram of $\measureKernela \QUSMarkovFollowing \tup{\measureKernelObs_{T}, \measureKernelPolicy_{T}} \QUSMarkovFollowing \cdots \QUSMarkovFollowing \tup{\measureKernelObs_{1}, \measureKernelPolicy_{1}} \QUSMarkovFollowing \tup{\measureKernelObs_{0}, \measureKernelPolicy_{0}} \colon \specSetS_0 \measureProduct \impSetI_0 \MarkovArr \qusdpOf{\funPosetF}{\resPosetR}$, a $T$-stage adaptive decision process.
    The right-most kernel, $\measureKernela \colon \specSetS_{T+1} \measureProduct \impSetI_{T+1} \MarkovArr \qusdpOf{\funPosetF}{\resPosetR}$, is illustrated in a similar way to those in \cref{fig:uav-adaptive-dp-battery-tech-after-actuation}.}
    \label{fig:T-stage-decision-process}
\end{figure}

Re-parameterization captures \emph{multi-stage adaptive decision processes}.
At each stage $k$, specifications $\specSetS_k$ and implementations $\impSetI_k$ are determined from the previous stage via a Markov kernel~$\tup{\measureKernelObs_{k-1}, \measureKernelPolicy_{k-1}} \colon \specSetS_{k-1} \measureProduct \impSetI_{k-1} \MarkovArr \specSetS_k \measureProduct \impSetI_k$ (\cref{fig:T-stage-decision-process}).
It has two components: (a) an \emph{observation model}~$\measureKernelObs_{k-1}
        \colon
        \specSetS_{k-1} \measureProduct \impSetI_{k-1} \MarkovArr \specSetS_k,$ describing how the environment and specifications evolve \rebuttal{(e.g. observing performance of the chosen actuator)}; (b) a \emph{decision policy}~$\measureKernelPolicy_{k-1}
        \colon
        \specSetS_{k-1} \measureProduct \impSetI_{k-1} \MarkovArr \impSetI_k,$ choosing the next implementation (possibly randomly) based on past specifications and decisions \rebuttal{(e.g. choosing a sensing algorithm based on performance of the chosen sensor)}.

\begin{remark}
\rebuttal{The space of Markov kernels~$\qusOf{\SetA}{\QUSDistriOf{\SetB}}$ is itself a \glsqus with quasi-measurable evaluation \cite{forreQuasiMeasurableSpaces2021}, so policies can be \I{implementations}; however parameterized policies are inpractice more convenient.}
\end{remark}

\rebuttal{

\begin{remark}\label{rmk:two-category-distri-mdpi}
The lifted operations of \cref{thm:qus-dp-compositional} and the re-parameterizations of \cref{def:repara-distri-mdpi} give two layers of adaptivity: (1) the \emph{policy level}, via the kernels $\measureKernelPolicy_k$, and (2) the \emph{co-design level}, since composition happens after all uncertainties relevant to the composite are observed.
In interval co-design (\cref{subsec:interval-uncertain-dp}) the optimistic and pessimistic \glspldp are solved independently, so choices may differ between the two and are all made at the \emph{co-design level}, after observing every relevant uncertainty.
\end{remark}

Re-parameterizations can be represented by arbitrary string diagrams, as in \cref{fig:uav-diagrams-different-adaptive-levels}.

\begin{remark}
\label{rmk:ct-monad-kleisli}
\rebuttal{\CTremark}
With structural quasi-measurable maps (duplication, deletion), $\QUSDistri$ extends to a strong monad \rebuttal{(adding structure compatible with series\slash parallel composition)} on~$\QMS$\rebuttal{ \cite{forreQuasiMeasurableSpaces2021}}, and restricting to \glsplqus makes it commutative, so its Kleisli category \rebuttal{(morphisms carrying the added structure)} is symmetric monoidal and arbitrary diagrams of Markov kernels are describable\rebuttal{ \cite{heunenConvenientCategoryHigherOrder2017,forreQuasiMeasurableSpaces2021}}.
Categorical coherence guarantees any diagram is a well-defined composite kernel \cite{selingerSurveyGraphicalLanguages2011,fritzSyntheticApproachMarkov2020}, as co-design diagrams like \cref{fig:uav-dp-composition} (bottom) define a proper \glsdp \cite{zardiniCoDesignComplexSystems2023}; both structures organize into a 2-category with uncertain co-design problems as 1-cells and re-parameterizations as 2-cells~\cite{furter2025composableuncertaintysymmetricmonoidal}.
\end{remark}

Conditionals and disintegrations of Markov kernels are not part of our basic structure \cite{forreQuasiMeasurableSpaces2021} but are powerful for learning and solving (hidden) Markov models \cite{fritzHiddenMarkovModels2025b}; with the literature on finite-horizon, varying-state-space decision processes \cite{feinbergHandbookMarkovDecision2002,liuSolutionTimeVaryingMarkov2018,bloor2025gaussian} they are a natural basis for efficient algorithms, left as future work.
Compositionality and re-parameterization thus give a quantitative, adaptive language: uncertain \glsplmdpi are defined per component, then composed and re-parameterized by kernels representing the design process (\cref{fig:uav-adaptive-dp-battery-tech-after-actuation}).}

\subsection{Queries and observations in co-design with distributional uncertainty}\label{subsec:queries-distri-mdpi}

\rebuttal{We now formalize what one can ask of distributional uncertain \glsplmdpi: \emph{queries} aggregate information about distributions over \glspldp, while \emph{observations} assign a quantity to each individual \glsdp and lift to queries via $\QUSDistri$; map spaces that \glsplqus provide greatly increase their flexibility (\cref{ex:fix-func-min-res-distri}).}

\subsubsection{Queries}
\begin{definition}[Queries for distributional uncertain \glsplmdpi]\label{def:distri-mdpi-queries}
\rebuttal{For a distributional uncertain \glsmdpi~$\tup{\specSetS,\impSetI,\measureKernela}$ with \glsplposet $\funPosetF, \resPosetR$, a \emph{query} is a map~$\maph \colon \QUSDistriOfDP{\funPosetF}{\resPosetR} \to \SetM$ for some set $\SetM$.}
\end{definition}

\rebuttal{We do not require $\maph$ quasi-measurable: querying is the final modeling step and measurability does not affect expressiveness.
When analyzing or optimizing $\specSetS \setproduct \impSetI \mapArrOf{\measureKernela} \QUSDistriOfDP{\funPosetF}{\resPosetR} \mapArrOf{\maph} \SetM$ one may impose measurability, continuity, or convexity on $\maph$ or $\maph \mapFollowing \measureKernela$, depending on the task.}

\begin{example}[Probability of feasible free choice]
\rebuttal{Currying the evaluation of \cref{thm:countable-design-requirements-measurable} gives a query for the probability of providing a free choice among countably many \F{functionality}\slash \R{resource} pairs: $\probquery \colon \QUSDistriOfDP{\funPosetF}{\resPosetR} \to \left( \SetExp{\left(\funPosetF \setproduct \resPosetR\right)}{\Nats} \to \interval{0}{1} \right)$,
    $
        \probquery(\tup{\distmu, \randomVarAlpha})\big(\set{\tup{\poselx_{\funPosetF, i}, \poselx_{\resPosetR, i}}}_{i \in \Nats}\big)
        =
        \distmu(\setWithArg{\outcomeomega}{\forall i, \tup{\poselx_{\funPosetF, i}, \poselx_{\resPosetR, i}} \in \randomVarAlpha(\outcomeomega)}).
    $
}
\end{example}

\begin{example}[Fix functionalities, minimize resources (probabilistic)]\label{ex:fix-func-min-res-distri}
\rebuttal{The probabilistic \emph{fix \F{functionalities}, minimize \R{resources}} query returns, for each countable family of required \F{functionalities}, the probability that each \R{resource} is sufficient:
    \begin{multline*}
    \fixfunminres \colon
        {\QUSDistriOfDP{\funPosetF}{\resPosetR}}
        \to
        {\left( \SetExp{\funPosetF}{\Nats} \to \left(\resPosetR \to \interval{0}{1}\right) \right)},\\
        \tup{\distmu, \randomVarAlpha}
        \mapsto
        \left(
        \begin{multlined}[c]
            \set{\poselx_{\funPosetF, i}}_{i \in \Nats}
            \mapsto
            \\
            \left(
            \begin{multlined}
                \poselxR
                \mapsto
                \distmu(\setWithArg{\outcomeomega}{\forall i, \tup{\poselx_{\funPosetF, i}, \poselx_{\resPosetR}} \in \randomVarAlpha(\outcomeomega)})
            \end{multlined}
            \right)
        \end{multlined}
        \right).
    \end{multline*}
}
\end{example}

\rebuttal{Some questions are defined only for certain distributions, giving partial queries.}

\begin{definition}[Partial queries]\label{def:partial-queries}
    A \emph{partial query} is a partial map $\maph \colon \QUSDistriOfDP{\funPosetF}{\resPosetR} \partialmapArr \SetM$.
\end{definition}

\begin{example}[Confidence intervals and point distributions]
\rebuttal{Where confidence intervals
(\cref{def:inner-confident-bounds,def:outer-confident-bounds})
can be computed,
the map $\confidenceInterval \colon \QUSDistriOfDP{\funPosetF}{\resPosetR} \partialmapArr \IntervalOf{\dpOf{\funPosetF}{\resPosetR}}$ is a partial query; post-composing it with a deterministic map $\mapq \colon \qusdpOf{\funPosetF}{\resPosetR} \mapArr \SetM$ yields another partial query $\QUSDistriOfDP{\funPosetF}{\resPosetR} \partialmapArrOf{\confidenceInterval} \IntervalOf{\dpOf{\funPosetF}{\resPosetR}} \mapArrOf{\IntervalOf{\mapq}} \IntervalOf{\SetM}$.
A second is $\const_\QUSDistri^\inv \colon \QUSDistriOf{\const}\left( \QUSDistriOf{\funPosetF \posetproduct \resPosetR} \right) \mapArr \QUSDistriOf{\funPosetF \posetproduct \resPosetR}$, satisfying $\const_\QUSDistri^\inv \mapFollowing \QUSDistriOf{\const} = \idmap$ and sending each distribution on \glspldp induced by one on $\funPosetF \posetproduct \resPosetR$ back to it.}
\end{example}

\begin{remark}
A partial query $\maph$ applied to a specific uncertain \glsmdpi~$\tup{\specSetS,\impSetI,\measureKernela}$ may still give a total map~$\maph \mapFollowing \measureKernela \colon \specSetS \setproduct \impSetI \to \SetM$, provided all design results lie in its domain,~$\measureKernela(\specSetS \setproduct \impSetI) \subseteq \domOf{\QUSDistriOf{\qusdpOf{\funPosetF}{\resPosetR}}}{\maph}$.
\end{remark}

\subsubsection{Observations}
\begin{definition}[Observations for realizations of \glsplmdpi]\label{def:observartion-distri-mdpi}
For a distributional uncertain \glsmdpi with \glsplposet $\funPosetF, \resPosetR$, an \emph{observation} is a quasi-measurable $\mapr \colon \qusdpOf{\funPosetF}{\resPosetR} \to \SetO$ into a \glsqus $\SetO$.
\end{definition}

\begin{remark}
\rebuttal{Quasi-measurability enables learning and Bayesian updating (datasets, priors and posteriors, an aleatoric\slash epistemic split), and lifts every observation to a query $\QUSDistriOf{\mapr} \colon \QUSDistriOfDP{\funPosetF}{\resPosetR} \to \QUSDistriOf{\SetO}$ from distributions on \glspldp to distributions on outcomes.
For a \glsposet $\SetO$ and monotone $\mapr$, an inner bound $\interval{\dprbL}{\dprbU}$ induces $\interval{\mapr(\dprbL)}{\mapr(\dprbU)}$ at the same level.}
\end{remark}

Under mild conditions on $\resPosetR$ one can define an observation returning the \emph{infimal resource requirement} for a set of functionalities.

\begin{lemma}\label{lem:sequence-poset-injection-qus}
\rebuttal{For every \glsqms $\posetP$, the constant sequence injection~$\constinj_{\Nats}:
    \posetP\to \SetExp{\posetP}{\Nats},$~$ 
    \poselxP\mapsto \set{\poselxP}_{i \in \Nats}$
is quasi-measurable by the definition of countable products of \glsplqms \cite{forreQuasiMeasurableSpaces2021}.}
\end{lemma}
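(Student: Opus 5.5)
The plan is to unfold the definition of the countable power $\SetExp{\posetP}{\Nats}$ as the countable product of \glsplqms. Following \cref{def:product-qms}, extended to countably many factors as in \cite{forreQuasiMeasurableSpaces2021}, a random variable into $\SetExp{\posetP}{\Nats}$ is exactly a family $\tup{\randomVarAlpha_i}_{i \in \Nats}$ with each $\randomVarAlpha_i \in \SetExp{\posetP}{\sampleSpaceOmega}$, viewed as the map $\outcomeomega \mapsto \set{\randomVarAlpha_i(\outcomeomega)}_{i \in \Nats}$. Equivalently, a map $\beta \colon \sampleSpaceOmega \to \SetExp{\posetP}{\Nats}$ is a random variable iff every coordinate $\proj_i \mapFollowing \beta$ lies in $\SetExp{\posetP}{\sampleSpaceOmega}$. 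I would first state this characterization explicitly, since it is the only structural input the argument needs.

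With the characterization in hand, I would verify quasi-measurability directly from \cref{def:qms-and-maps-and-sigalgs}. Take an arbitrary $\randomVarAlpha \in \SetExp{\posetP}{\sampleSpaceOmega}$ and compute the post-composite $\constinj_{\Nats} \mapFollowing \randomVarAlpha \colon \outcomeomega \mapsto \set{\randomVarAlpha(\outcomeomega)}_{i \in \Nats}$. Each coordinate satisfies $\proj_i \mapFollowing \constinj_{\Nats} \mapFollowing \randomVarAlpha = \randomVarAlpha$, which is a random variable by assumption. Hence the composite is the tuple $\tup{\randomVarAlpha}_{i \in \Nats}$ of random variables, and so lies in $\SetExp{\left(\SetExp{\posetP}{\Nats}\right)}{\sampleSpaceOmega}$. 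An equivalent categorical phrasing, suitable for a \CTremark aside, is that $\constinj_{\Nats}$ is the tuple $\tup{\idmap_\posetP}_{i \in \Nats}$ of identity maps, and tuples of quasi-measurable maps into a product are quasi-measurable by the universal property.

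The only potential obstacle is notational. $\SetExp{\posetP}{\Nats}$ could be misread as the \glsqms of quasi-measurable maps $\Nats \to \posetP$ from \cref{def:qms-maps}, where $\Nats$ carries the discrete \glsqms structure. If that reading is intended, I would add one step checking that it agrees with the countable product. Since $\Nats$ is discrete, every map $\Nats \to \posetP$ is quasi-measurable. The random-variable condition of \cref{def:qms-maps}, tested against the constant random variables $\const_i$ on $\Nats$, forces each coordinate $\outcomeomega \mapsto \beta(\mapVarphi(\outcomeomega))(i)$ to be a random variable. Conversely, coordinatewise random variables pass the test for a general $\randomVarAlpha \colon \sampleSpaceOmega \to \Nats$ by splitting $\sampleSpaceOmega$ into the countably many measurable level sets $\randomVarAlpha^\inv(i)$ and gluing. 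This gluing requires the countable-gluing property of random variables on a \glsqus, which I would cite from \cite{forreQuasiMeasurableSpaces2021} rather than reprove.
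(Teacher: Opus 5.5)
Your proof is correct and matches the paper's justification: random variables into the countable product are coordinatewise tuples of random variables, and $\constinj_{\Nats} \mapFollowing \randomVarAlpha = \tup{\randomVarAlpha}_{i \in \Nats}$. Your third paragraph is not needed even under the function-space reading of \cref{def:qms-maps}, because for any index random variable $\gamma \colon \sampleSpaceOmega \to \Nats$ one has $\outcomeomega \mapsto \constinj_{\Nats}\big(\randomVarAlpha(\mapVarphi(\outcomeomega))\big)\big(\gamma(\outcomeomega)\big) = \randomVarAlpha(\mapVarphi(\outcomeomega))$, which is a random variable by the precomposition axiom alone (each constant sequence being trivially quasi-measurable), so neither the gluing property nor the identification with the product is required.
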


\begin{lemma}\label{lem:total-order-inf-measurable}
Let $\posetP$ be a \glsposet satisfying \cref{asm:posets-souslin-order-borel}.
If:
\begin{enumerate}[nosep]
    \item \rebuttal{it is} totally ordered.
    \item each measurable set has an infimum.
    We denote $-\infty \defeq \inf(\posetP)$ and $\infty \defeq \inf(\emptySet)$.
    \item the Borel sigma algebra is generated by open and unbounded open intervals, i.e., $(a, b)$, $[-\infty, b)$, and $(a, \infty]$.
\end{enumerate}
Then the infimum map~$\inf:
    \QUSBorel(\posetP)\to \posetP,$~${\SetA}\mapsto {\inf(\SetA)}$
is quasi-measurable, where the random variables $\SetExp{\QUSBorel(\posetP)}{\sampleSpaceOmega}$ are defined as in \cite[Definition 2.47]{forreQuasiMeasurableSpaces2021}.
\end{lemma}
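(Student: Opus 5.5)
The plan is to check quasi-measurability directly: fix a random variable $\beta \in \SetExp{\QUSBorel(\posetP)}{\sampleSpaceOmega}$ and show that $\inf \mapFollowing \beta \colon \sampleSpaceOmega \to \posetP$ is a random variable of $\posetP$. Under the convention after \cref{def:qms-and-maps-and-sigalgs}, the random variables of $\posetP$ are the (universally) measurable maps $\sampleSpaceOmega \to \posetP$. It therefore suffices to show that the pre-images of a generating family of the Borel sigma algebra are universally measurable. By hypothesis 3, and because intervals are intersections of rays, we can take the rays $[-\infty, b)$ and $(a, \infty]$ as generators.

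The first step is to unpack \cite[Definition 2.47]{forreQuasiMeasurableSpaces2021}. A random variable $\beta$ into $\QUSBorel(\posetP)$ corresponds to a set $\SetB_\beta \defeq \setWithArg{\tup{\outcomeomega, \poselxP}}{\poselxP \in \beta(\outcomeomega)} \subseteq \sampleSpaceOmega \setproduct \posetP$ that is measurable in the appropriate product sense. This is the analogue of the \glsofs of \cref{def:qus-of-dp}, and it is what makes membership $\poselxP \in \beta(\outcomeomega)$ jointly measurable.

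The second step handles the lower rays, using totality (hypothesis 1). For $b \in \posetP$ we have $\inf \beta(\outcomeomega) \prec b$ iff there exists $\poselxP \in \beta(\outcomeomega)$ with $\poselxP \prec b$. The forward direction holds because, if every element of $\beta(\outcomeomega)$ were $\succeq b$, then $b$ would be a lower bound, giving $\inf \succeq b$. The converse is immediate. Hence
\begin{equation*}
(\inf \mapFollowing \beta)^\inv([-\infty, b)) = \proj_1\big(\SetB_\beta \cap (\sampleSpaceOmega \setproduct [-\infty, b))\big).
\end{equation*}
This is a projection of a measurable subset of the Souslin space $\sampleSpaceOmega \setproduct \posetP$. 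If $\SetB_\beta$ is analytic, \cref{thm:measurable-image-pre-image-Souslin} makes the projection analytic. Otherwise, the measurable projection theorem makes it universally measurable. Either way it lies in the completed sigma algebra of $\sampleSpaceOmega$.

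The third step handles the upper rays: $(\inf \mapFollowing \beta)^\inv((a, \infty])$ is the complement of $(\inf\mapFollowing\beta)^\inv([-\infty, a])$. Since $\inf \beta(\outcomeomega) \preceq a$ iff $\inf \beta(\outcomeomega) \prec c$ for all $c \succ a$, I would write $[-\infty, a]$ as a countable intersection of lower rays $[-\infty, c_n)$. If no such sequence exists because $a$ has an immediate successor $a^+$, then $[-\infty,a] = [-\infty,a^+)$ directly. The Souslin hypothesis provides separability and hence a countable set of such $c_n$. Complements and countable intersections of universally measurable sets remain universally measurable, so $\inf \mapFollowing \beta$ is measurable for the completion. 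The edge cases $\beta(\outcomeomega) = \emptySet$, where $\inf = \infty$, and the bottom element $-\infty$ are handled by the conventions in hypothesis 2.

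The main obstacle I expect is the countable approximation in the third step. A general Souslin totally ordered space can have points with uncountably many order-gaps. I would first try to use second countability of the Souslin topology together with hypothesis 3 to extract a countable order-dense subset, so that each ray can be written as a countable union or intersection of generator rays. Only if this fails would I instead use the projection formula for $\{\inf \succ a\}$ directly, as the complement of $\proj_1(\SetB_\beta \cap (\sampleSpaceOmega \setproduct [-\infty, a]))$ together with the event that $\inf = a$ is attained.
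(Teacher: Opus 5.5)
Your plan follows the paper's proof. You unpack \cite[Definition~2.47]{forreQuasiMeasurableSpaces2021} into a set $\SetD \subseteq \sampleSpaceOmega \setproduct \posetP$ with $\poselxP \in \randomVarBeta(\outcomeomega) \iff \tup{\outcomeomega, \poselxP} \in \SetD$, and you write pre-images of rays as $\proj_1(\SetD \cap \proj_2^\inv(\cdot))$. Restricting this identity to strict rays is not cosmetic. The paper also applies it to $[-\infty, b]$, through $\inf\randomVarBeta(\outcomeomega) \posetleq b \iff \exists \poselxP \in \randomVarBeta(\outcomeomega),\, \poselxP \posetleq b$. That equivalence fails when the infimum is not attained, e.g.\ $\randomVarBeta(\outcomeomega) = (b, \infty]$ in the extended reals. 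So your strict-ray identity is right, and your third step is exactly the work the paper's proof skips. That step is also where your proposal has a gap: you never actually produce the countable family $c_n$.

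Separability does not give $c_n$, because the topology is tied to $\posetleq$ only through the Borel $\sigma$-algebra. For instance, reorder $[-\infty,\infty]$ by a Borel automorphism mapping $\mathbb{Q}$ into $\mathbb{Z}$: all hypotheses survive, but that dense set is no longer order-dense. Souslin topologies also need not be second countable. Your fallback does not help either, since the hard event is ``$\inf\randomVarBeta(\outcomeomega) = a$, not attained'', which hides the same quantifier over all $c \succ a$.

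What closes the gap is hypothesis~3 applied at the level of Borel sets:
\begin{enumerate}[nosep]
\item A Souslin space has countably many open sets $U_n$ separating points. The complement of the diagonal in $\posetP \setproduct \posetP$ is open in a Souslin space, hence Souslin and Lindel\"of. So countably many rectangles $U_n \setproduct V_n$ with $U_n \cap V_n = \emptySet$ cover it, and the $U_n$ separate points.
\item Each $U_n$ lies in the $\sigma$-algebra generated by countably many intervals. Sets not separating a given pair form a $\sigma$-algebra, so some countable family $\mathcal{I}_0$ of intervals and rays already separates points.
\item If $z' \prec z$, a member of $\mathcal{I}_0$ containing exactly one of them has an endpoint in $[z', z]$.
\item Let $E$ be the countable set of these endpoints. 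When $a$ has no immediate successor, every $(a, y)$ with $y \succ a$ contains two points, hence meets $E$. Therefore $\{\inf \mapFollowing \randomVarBeta \posetleq a\} = \bigcap_{e \in E,\, e \succ a} \{\inf \mapFollowing \randomVarBeta \prec e\}$ is the countable intersection you wanted.
\end{enumerate}

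Finally, the ``either way'' in your second step is not justified, though the paper is no better here. \Cref{thm:measurable-image-pre-image-Souslin} needs $\SetD$ analytic, which the paper assumes without saying so. The measurable projection theorem needs $\SetD$ in the product of the completed $\sigma$-algebra of $\sampleSpaceOmega$ with the Borel sets of $\posetP$. A general member of $\QUSBorel(\sampleSpaceOmega \setproduct \posetP)$ is only universally measurable (co-analytic sets qualify), and projections of such sets are not provably measurable in ZFC. Either restrict to $\randomVarBeta$ whose $\SetD$ is analytic, as \cref{def:qus-of-dp} does for outcome feasible sets, or flag this step explicitly.
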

\begin{proof}
\rebuttal{
    Let $\randomVarBeta$ be a random variable in $\SetExp{\QUSBorel(\posetP)}{\sampleSpaceOmega}$.
    We need to prove $\inf \mapFollowing \randomVarBeta \in \SetExp{\posetP}{\sampleSpaceOmega}$.
    Because of how the Borel sigma algebra is generated, we only need to prove pre-images of two types of intervals: $[-\infty,b]$ and $[\infty,b)$, are universally measurable.
    From \cite[Definition 2.47]{forreQuasiMeasurableSpaces2021}, we know that there exists $\SetD \in \QUSBorel(\sampleSpaceOmega \setproduct \posetP)$, making $\poselxP \in \randomVarBeta(\outcomeomega) \iff \tup{\outcomeomega, \poselxP} \in \SetD$.
    We can compute the pre-image of $[-\infty,b]$ by
    \begin{align*}
        &
        \left( \inf \mapFollowing \randomVarBeta \right)^\inv([-\infty, b])
        \\
        =
        &
        \setWithArg{\outcomeomega}{\inf(\randomVarBeta(\outcomeomega)) \posetleq b}
        \\
        =
        &
        \setWithArg{\outcomeomega}{\exists \poselxP \in \randomVarBeta(\outcomeomega), \poselxP \posetleq b}
        \\
        =
        &
        \setWithArg{\outcomeomega}{\exists \poselxP, \tup{\outcomeomega, \poselxP} \in \SetD, \poselxP \posetleq b}
        \\
        =
        &
        \setWithArg{\outcomeomega}{\exists \poselxP, \tup{\outcomeomega, \poselxP} \in \SetD, \tup{\outcomeomega, \poselxP} \in \proj_{2}^\inv([\infty, b])}
        \\
        =
        &
        \proj_{1}(\SetD \cap \proj_{2}^\inv([\infty, b])).
    \end{align*}
    For the interval $[\infty, b)$, we only need to take the strict inequalities and have $\left( \inf \mapFollowing \randomVarBeta \right)^\inv([-\infty, b)) = \proj_{1}(\SetD \cap \proj_{2}^\inv([\infty, b))$.
    Both of them are universally measurable \rebuttal{by \cref{thm:measurable-image-pre-image-Souslin} and set intersection preserving analytic}.
}
\end{proof}

\rebuttal{Its conditions hold for any second countable totally ordered set with order-generated topology containing its limits, e.g.\ $\Reals$ with $\pm\infty$ or closed intervals of $\Reals$.}

\begin{example}[Distribution of minimal resource]\label{ex:distri-minimal-resource}
\rebuttal{Suppose $\resPosetR$ satisfies the conditions of \cref{lem:total-order-inf-measurable} and fix an at most countable family of functionalities $\set{\poselxFindex{i}}_{i \in \Nats}$.
We construct an observation~$r_{\min} \colon \qusdpOf{\funPosetF}{\resPosetR} \to \resPosetR$ returning the infimal resource providing all of them.
Currying $\ev$ of \cref{thm:countable-design-requirements-measurable} gives a quasi-measurable $\bar{\ev} \colon \qusdpOf{\funPosetF}{\resPosetR} \to \SetExp{\setOfBool}{\SetExp{\funPosetF}{\Nats} \setproduct \SetExp{\resPosetR}{\Nats}}$ (\cref{lem:qms-curry}); pre-composing with~$\const_{\{\poselxFindex{i}\}_{i \in \Nats}} \setproduct \idmap$ and then $\const_{\Nats}$ (\cref{lem:sequence-poset-injection-qus}) lands in $\SetExp{\setOfBool}{\resPosetR}$; with the quasi-measurable bijection $\chi$ of~\cite[Lemma~2.51]{forreQuasiMeasurableSpaces2021} between $\SetExp{\setOfBool}{\resPosetR}$ and $\QUSBorel(\resPosetR)$ and the infimum map of \cref{lem:total-order-inf-measurable}, we obtain}
\begin{multline*}
        \qusdpOf{\funPosetF}{\resPosetR}
        \mapArrOf{\bar{\ev}}
        \SetExp{\setOfBool}{\SetExp{\funPosetF}{\Nats} \setproduct \SetExp{\resPosetR}{\Nats}}
        \\
        \mapArrOf{(-) \mapFollowing \left(\const_{\{\poselxFindex{i}\}_{i \in \Nats}} \setproduct \idmap\right)}
        \SetExp{\setOfBool}{\SetExp{\resPosetR}{\Nats}}
        \\
        \mapArrOf{(-) \mapFollowing \constinj_\Nats}
        \SetExp{\setOfBool}{\resPosetR}
        \mapArrOf{\chi}
        \QUSBorel(\resPosetR)
        \mapArrOf{\inf}
        \resPosetR.
    \end{multline*}
    \rebuttal{On $\dprb \in \qusdpOf{\funPosetF}{\resPosetR}$ this returns}
    \[
        \mapr_{\min}(\dprb)
        =
        \inf\Big(
            \setWithArg{\poselxR}{
                \forall i,\;
                \tup{\poselxFindex{i},\poselxR} \in \dprb}
        \Big),
    \]
    \rebuttal{the infimal resource satisfying all required functionalities.
    Lifting $\mapr_{\min}$ via $\QUSDistri$ gives a query $\QUSDistriOf{r_{\min}} \colon \QUSDistriOfDP{\funPosetF}{\resPosetR} \to \QUSDistriOf{\resPosetR}$ from distributions on \glspldp to induced distributions on minimal resources.
    An inner confidence bound $\interval{\dprbL}{\dprbU}$ yields $\interval{\mapr_{\min}(\dprbL)}{\mapr_{\min}(\dprbU)}$, a bound on the minimal resource at the same probability.
    Currying $\ev$ over $\resPosetR^\Nats$ further lets $\mapr_{\min}$ take the required \F{functionalities} as extra inputs and stay quasi-measurable.}
\end{example}

\section{Numerical Example: uncertain task-driven co-design of \glsentrylongpl{abk:uav}}\label{sec:numerical-example}
We instantiate the framework on a task-driven \glsuav delivery problem adapted from~\cite{censi2017uncertainty, huang2025composable}.
\rebuttal{The goal is to expose in one benchmark the gap between (i) deterministic co-design, (ii) interval uncertainty, and (iii) distributional uncertainty with adaptive decision-making.
\Cref{subsec:task-driven-uav} gives the co-design model and decomposition; \cref{subsec:deterministic-interval-uav} solves the deterministic and interval baselines and clarifies how to read the optimistic\slash pessimistic envelope; \cref{subsec:distri-uav-max-adaptive} adds distributional uncertainty in the task profile and component specifications, computing distributions of optimal cost by Monte Carlo; \cref{subsec:design-results-with-adaptive-levels-uav} compares procedures differing in \emph{which} choices may depend on \emph{which} observations, in the language of \cref{subsec:imp-spec-adaptive-mdpi}.}

\subsection{Task-driven \glsentrylong{abk:uav} co-design}\label{subsec:task-driven-uav}

\begin{figure}[tb]
    \centering
    \includegraphics[width=\columnwidth]{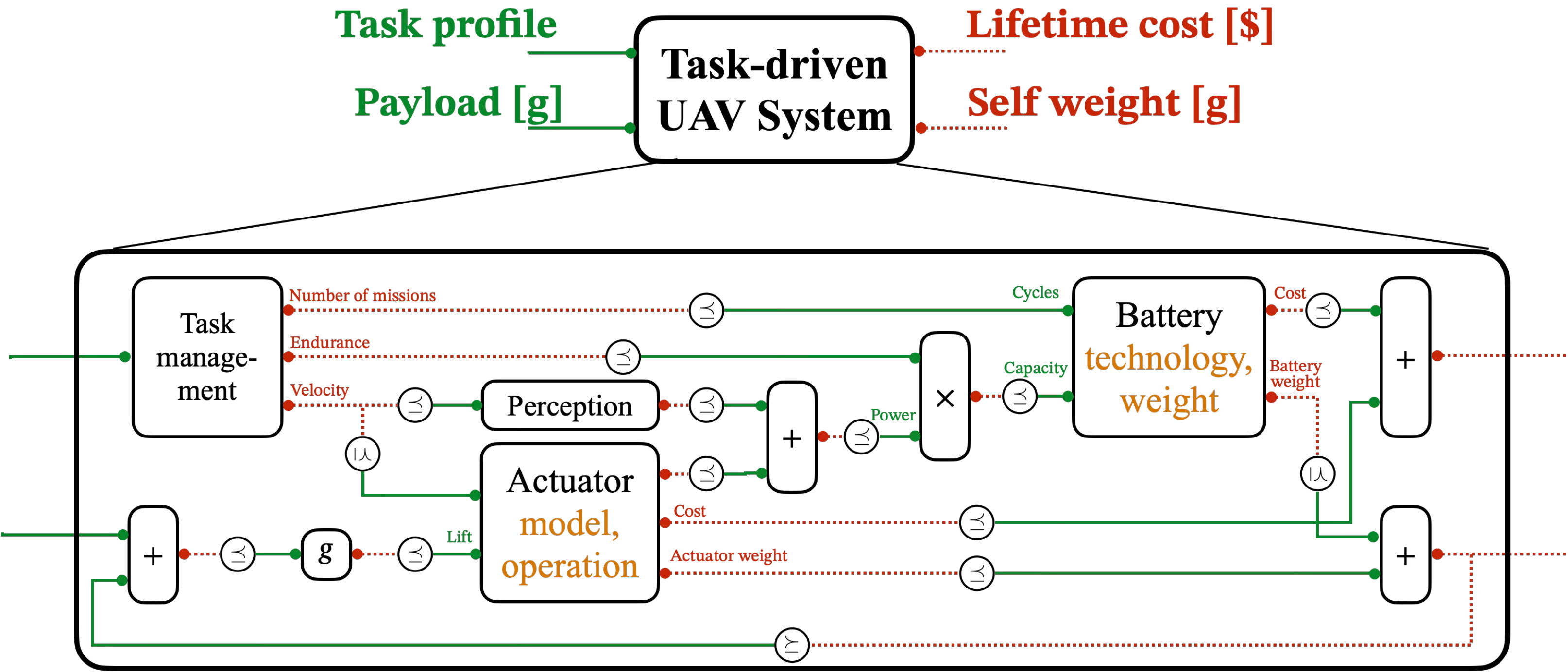}
    \caption{Co-design diagram for a task-driven \gls{abk:uav}, decomposed into functional components; designs carry \I{implementation} color.
    \rebuttal{Red and green lines leaving the lower diagram are the four system-level \F{functionalities}\slash \R{resources} of the interface above.}}
    \label{fig:uav-dp-interface}
    \label{fig:uav-dp-composition}
\end{figure}

\rebuttal{An \gls{abk:uav} must complete delivery missions over its operational lifetime.
A \FI{task profile} comprises \FI{number of delivery missions}, \FI{distance coverage}, and \FI{mission frequency}; given one, the objective is to maximize \FI{payload} while minimizing \RI{lifetime cost} and \RI{self weight}.
In co-design this is a system \gls{abk:dp} providing \FI{task profile} and \FI{payload} and requiring \RI{lifetime cost} and \RI{self weight} (\cref{fig:uav-dp-composition}, top), decomposed as in \cref{fig:uav-dp-composition} (bottom) following~\cite{zardiniCoDesignComplexSystems2023}.
For interpretability, explicit \I{implementations} appear only in \emph{Actuation} (model, operating point) and \emph{Battery} (technology, weight); co-design extends to richer architectures with minimal overhead~\cite{zardiniecc21, milojevic2025codei}.}

\rebuttal{Recall that colors record the \emph{role} a quantity plays at the interface where it appears, not an intrinsic property:
\FI{velocity} is a \F{functionality} the actuation component provides, hence green at the actuation interface, while it is a \R{resource} required by the task-management block, hence red there \cref{fig:uav-dp-composition}.
This role reversal is performed by series composition.}

\paragraph*{Task management}
\rebuttal{For each \FI{task profile}, this block derives intermediate requirements: \RI{number of missions}, \RI{endurance}, \RI{velocity}.}

\paragraph*{Perception}
The perception stack (sensor and software) is fixed, hence has no \I{implementation} choice; \rebuttal{its \RI{power} consumption increases with \FI{velocity}}~\cite{karaman2012high}.

\paragraph*{Actuation}
\rebuttal{Three actuator \I{models} are considered(\cref{tab:param-actuator-batteries}), each with deterministic \RI{weight}, \RI{cost}, and \FI{maximum velocity}.
An \I{operation} decision allocates \FI{power}~$\powerp$ to \F{lift}~$\lift$ via $\powerp \ge p_{0} + p_{1}\lift^2$, with $p_0, p_1$ \Sp{specifications} of this \glsdp.
Here the \emph{parameters} of design choices (some uncertain later) coincide with the \glsdp \Sp{specifications}; in general they may differ, as long as each choice's parameters yield a component \Sp{specification}.}

\paragraph*{Battery}
\rebuttal{A battery design is a \I{technology} choice (\cref{tab:param-actuator-batteries}) plus a \I{battery weight} decision.
Its \glsdp is specified by \SpI{energy density}, \SpI{unit energy per cost}, and \SpI{cycle life} (also the technology parameters), fixing the trade-off between provided \FI{capacity}\slash \FI{cycles} and required \RI{battery weight}\slash \RI{total cost}, maintenance and replacement included.}

\begin{table}[tb]
    \centering
    \footnotesize\setlength{\tabcolsep}{3pt}
    \begin{tabular}{l C{0.72cm} C{0.72cm} C{0.72cm} C{0.72cm} C{0.72cm}}
        \toprule
        {Actuator} & {Mass} [\si{g}] & {Cost} \newline [\si{\$}] & {Velocity} \newline [\si{m/s}] & $p_0$ \newline [\si{W}] & $p_1$ \newline [\si{W/N^2}] \\
        \hline 
         $\actuatorOne$ & 50.0 & 50.0 & 3.0 & 1.0 & 2.0 \\
         $\actuatorTwo$ & 100.0 & 100.0 & 3.0 & 2.0 & 1.5 \\
         $\actuatorThree$ & 150.0 & 150.0 & 3.0 & 3.0 & 1.5 \\
    \end{tabular}
    \vspace{1pt}

    \footnotesize\setlength{\tabcolsep}{3pt}
    \begin{tabular}{L{1.5cm} C{1.5cm} C{1.5cm} C{1.5cm}}
        \toprule
        {Battery Technology} & {Energy density} [\si{Wh/kg}] & {Unit power per cost} [\si{Wh/\$}] & {Number of cycles} \\
        \hline 
         NiMH & 100.0 & 3.41 & 500 \\
         NiH2 & 45.0 & 10.50 & 20,000 \\
         LCO & 195.0 & 2.84 & 750 \\
         LMO & 150.0 & 2.84 & 500 \\
         NiCad & 30.0 & 7.50 & 500 \\
         SLA & 30.0 & 7.00 & 500 \\
         LiPo & 150.0 & 2.50 & 600 \\
         LFP & 90.0 & 1.50 & 1,500 \\
         \bottomrule
    \end{tabular}
    \caption{Deterministic parameters of component choices~\cite{censi2017uncertainty}.}
    \label{tab:param-actuator-batteries}
\end{table}

\subsection{Deterministic and interval uncertainty models}
\label{subsec:deterministic-interval-uav}
We begin with deterministic parameters and free choice over all \I{implementations}.
Fixing the \FI{task profile}, we solve the \fixfunminres query for the co-design model in \cref{fig:uav-dp-composition} and project the resulting \glsdp to \FI{payload} versus \RI{lifetime cost} for visualization \rebuttal{(\FI{payload} is \emph{not} in the \F{functionality} requirement but part of the resulting trade-off, as noted in \cref{rmk:queries-fix-any-opt-others})}.
\rebuttal{\Cref{fig:deterministic-interval-trade-off} reports the trade-off: larger payloads force higher lifetime cost, illustrating monotonicity of the feasible relation.}
\rebuttal{For the interval baseline we perturb the nominal parameters of \cref{tab:param-actuator-batteries} by \SI{\pm 5}{\percent} into optimistic and pessimistic variants and solve the same query for each.
The envelope must be read with care: as the two problems are solved \emph{independently}, it is a \emph{wait-and-see} interpretation in which the \I{implementation} optimal for each realized instance may be chosen, so the optimal actuator\slash battery may differ between the curves.
If the designer must instead commit \emph{before} uncertainty is realized (here-and-now), the envelope does not characterize a fixed commitment: picking \II{actuator $a_1$} and \II{battery LCO} at payload \SI{2500}{g} from the optimistic curve guarantees neither feasibility nor cost under the pessimistic realization, which may require \II{actuator $a_2$}.
This motivates explicit distributions and staged policies.}

\begin{figure}[tb]
    \centering
    \includegraphics[width=\columnwidth]{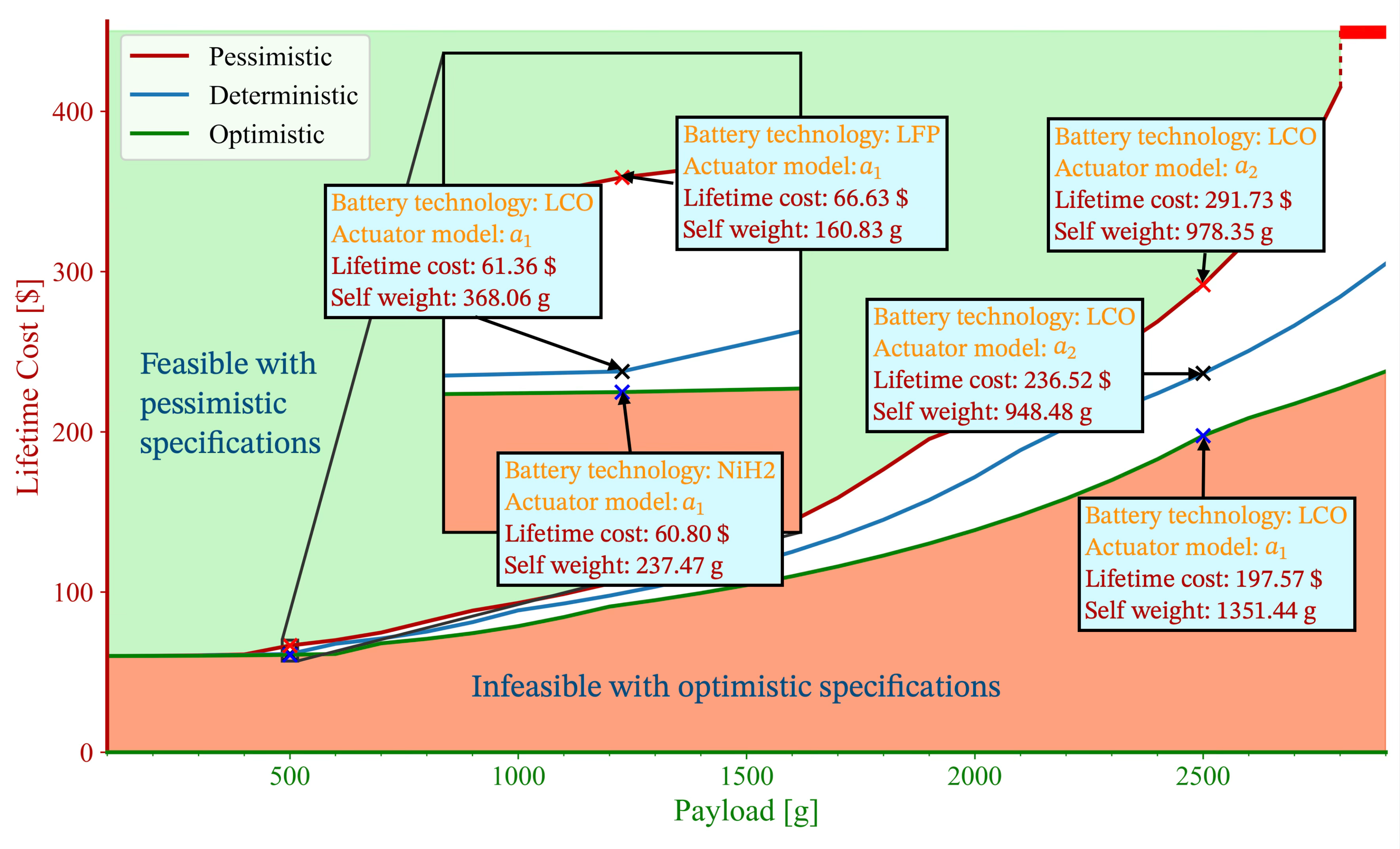}
\caption{
Required \FI{payload} versus minimal \RI{lifetime cost} for a fixed \FI{task profile}, deterministic and interval-uncertain.
Pairs above the pessimistic curve (green) are robustly feasible, below the optimistic one (orange) infeasible even optimistically; between them, feasible for some realizations only.
Labels give optimal \I{implementations} at some payloads; for large payloads the pessimistic model is infeasible, with minimal cost $+\infty$.}
    \label{fig:deterministic-interval-trade-off}
\end{figure}

\subsection{Distributional models and confidence bounds with all decisions after observing outcomes}\label{subsec:distributional-uncertainty-uav}
\label{subsec:distri-uav-max-adaptive}

We now add distributional uncertainty in (i) the task profile and (ii) the actuation and battery parameters.
\rebuttal{We first take a \emph{maximally adaptive} benchmark in which all discrete \I{implementation} choices follow the observation of the realized parameters: in the language of \cref{sec:distributional-uncertainty-co-design}, it is exactly the lifted union as \emph{free choice after observation}.
\Cref{subsec:design-results-with-adaptive-levels-uav} then restricts it to realistic adaptivity levels.}

\paragraph{Uncertain task profile}
\rebuttal{\FI{Mission frequency} and \FI{distance coverage} are deterministic; the \FI{number of delivery missions} is random, modeled with an extra \glsdp (\cref{rmk:dist-f-r-as-dps}, Task in \cref{fig:uav-adaptive-dp-battery-tech-after-actuation}).}

\paragraph{Sample space and distributions}
Let $\Omega$ be the product of the random parameters defining the task and all available choices:
\begin{equation*}
    \Omega \;=\; \Omega_T \times \Omega_\batteryset \times \Omega_\actuatorset
    \;=\; \rebuttal{\left(\Omega_N\right)} \times \Big(\prod_{b\in\mathcal{B}} \Omega_b\Big)\times \Big(\prod_{i\in\{1,2,3\}} \Omega_{a_i}\Big),
\end{equation*}
\rebuttal{with $\Omega_N = \Nats$ the number of missions, $\Omega_b$ the uncertain parameters of battery technology $b\in\mathcal{B}$ (\cref{tab:param-actuator-batteries}), and $\Omega_{a_i}=\Reals_{\ge 0}^2$, $\omega_{a_i}=\tup{p_{0,i},p_{1,i}}$, those of actuator $a_i$.}

\rebuttal{Each uncertain scalar has an independent (truncated) Gaussian marginal, with mean the value in \cref{tab:param-actuator-batteries} and variance calibrated so its \SI{90}{\percent} confidence interval is \SI{\pm 5}{\percent}; $\mu$ is the product of all marginals.}

\paragraph{Random DPs for components}
To lighten notation, write $\omega=\tup{\omega_T,\omega_\batteryset,\omega_\actuatorset}$ and display only the coordinates relevant to each component.
\rebuttal{With $\tup{v,\lift}$ the \FI{velocity} and \FI{lift} functionalities and $\tup{p,c,w}$ the \RI{power}, \RI{cost}, and \RI{weight} resources, the random performance of actuator model $a_i$ is
\begin{multline*}
    \alpha_{a_i}\colon\Omega\to \qusdpOf{\F{\RealsNonNeg^2}}{\R{\RealsNonNeg^3}},\ \omega \mapsto
    \Big\{\tup{\tup{v,\lift},\tup{\powerp,c,w}} \,\Big|\,\\
    v\le v_{a_i},\; c_{a_i}\le c,\; w_{a_i}\le w,\;
    p_{0,i}+p_{1,i}\lift^2 \le \powerp
    \Big\}.
\end{multline*}}
Since $\alpha_{a_i}$ depends on $\omega$ only through $\omega_{a_i}=\tup{p_{0,i},p_{1,i}}$, the sample-space decomposition required for constructive composition (\cref{rmk:composing-distri-dps}) applies.

As the benchmark lets the designer choose the actuator \emph{after} observing $\omega$, actuation \glsdp is the \rebuttal{pointwise deterministic} union
\begin{equation*}
    \randomVarAlpha_\Actuation = \union \mapFollowing \tup{\randomVarAlpha_\actuatorOne, \randomVarAlpha_\actuatorTwo, \randomVarAlpha_\actuatorThree}
    \colon
    \sampleSpaceOmega
    \mapArr
    \qusdpOf{\F{\RealsNonNeg^2}}{\R{\RealsNonNeg^3}}.
\end{equation*}
The battery is analogous, with $\randomVarAlpha_\Battery$ the union over technologies after observing $\outcomeomega_\batteryset$.
\rebuttal{Following \cref{rmk:dist-f-r-as-dps}, stochasticity in the \RI{task profile} is modeled with a special \glsdp (Task in \cref{fig:uav-adaptive-dp-battery-tech-after-actuation}) with one \R{resource} interface and trivial \F{functionality}: the singleton \glsposet \cite{censi2022}.
With $d_\text{r}, \nu_\text{r}$ the required distance coverage and mission frequency and $\outcomeomega_N \in \sampleSpaceOmega_N = \Nats$ the random number of deliveries,
\begin{multline*}
    \randomVarAlpha_\Task \colon \sampleSpaceOmega \to \qusdpOf{\F{\set{\singletonEl}}}{\R{\Reals_{\geq 0}^2 \setproduct \Nats}},\
    \outcomeomega \mapsto \\
    \setWithArg{\tup{\singletonEl, \tup{d, \nu, n}}}{d \geq d_\text{r}, \nu \geq \nu_\text{r}, n \geq \outcomeomega_N}.
\end{multline*}
}

\paragraph{\rebuttal{System-level random DP}}
\rebuttal{All remaining blocks of \cref{fig:uav-dp-composition} are deterministic, embedded as constant random variables.
As the decomposition of \cref{rmk:composing-distri-dps} applies, the system random variable is explicit: with $\mapf$ composing the deterministic component \glspldp of \cref{fig:uav-dp-composition} and the Task \glsdp of \cref{fig:uav-adaptive-dp-battery-tech-after-actuation},
\begin{equation}\label{eq:uav-system-rv-choosing-after-all}
    \randomVarAlpha_{\mathrm{UAV}}^\text{post} = \mapf \mapFollowing \tup{\randomVarAlpha_T, \randomVarAlpha_\Battery, \randomVarAlpha_\Actuation, \cdots}.
\end{equation}
Fixing the distribution $\distmu$ on $\sampleSpaceOmega$ yields the distributional uncertain design result $\tup{\distmu, \alpha_{\mathrm{UAV}}^{\text{post}}\colon\Omega \to \qusdpOf{\F{\Reals_{\ge 0}}}{\R{\Reals_{\ge 0}}}}$,}
with \FI{payload} the functionality and \RI{lifetime cost} the resource (projected for visualization).

\paragraph{Inner confidence bounds}
\rebuttal{Inner confidence bounds follow from monotonicity of the component models in their parameters.
Endow $\Omega$ with the product order: actuation is antitone in $\tup{p_{0,i},p_{1,i}}$ (larger values worsen feasibility) while key battery parameters (energy density, energy-per-cost) are monotone, so after taking opposites where needed the dependence is monotone and rectangular intervals $[\omega_L,\omega_U]\subseteq \Omega$ induce \glsdp intervals $[\alpha(\omega_L),\alpha(\omega_U)]$ as inner bounds.
By independence, each scalar interval at level $\rho$ gives rectangle probability $\rho^{K}$ for $K$ uncertain scalars, so an overall level $p$ needs $\rho=p^{1/K}$.}

\rebuttal{
\paragraph{Numerical design results}
To visualize results we apply the observation of \cref{ex:distri-minimal-resource}, giving distributions on one \R{resource} with all other \F{functionalities} and \R{resources} fixed, estimated by Monte Carlo sampling of $\omega$ from $\mu$; bounds are curves.

\Cref{fig:uav-battery-results-all-after} shows the \R{battery weight}\slash \R{battery cost} trade-off when requesting \FI{\SI{150}{\watt\hour}} and \FI{1000 cycles}.
The two bounds come from discrete choices, whereas the distributional result also captures (a) rare cases where an \I{implementation} off the bounds is optimal (\II{LMO}, first annotated distribution); (b) mixtures of the bounding \I{implementations} when these differ (\II{LFP} and \II{NiMH}, second); and (c) the gradual take-over of a new \I{implementation} as \R{weight} increases (third), in contrast to the abrupt jumps and large gaps of the bounds.
\Cref{fig:uav-actuator-results-all-after} shows the actuator trade-off projected onto \FI{lift} and \R{power}, \cref{fig:uav-design-results-choosing-all-after} the composed \glsuav system: components and system share \emph{the same} language and visualization (framework's modularity).
The \SI{\pm 5}{\percent} per-parameter bounds give $\rho=0.9$, hence for $K=30$ scalars a composed level of only $p=\rho^{30}\approx 0.042$.
The product construction is thus conservative while the distributional composition is \emph{exact}: empirically over \SI{90}{\percent} of realized minimal \RI{lifetime costs} fall inside the composed bound, above the \SI{4.2}{\percent} guarantee (\cref{fig:uav-design-results-choosing-all-after}).}

\begin{figure}[tb]

\centering
{
\newcommand{\subfigurescale}{0.12}
\begin{subfigure}{0.48\columnwidth}
    \includegraphics[scale=\subfigurescale]{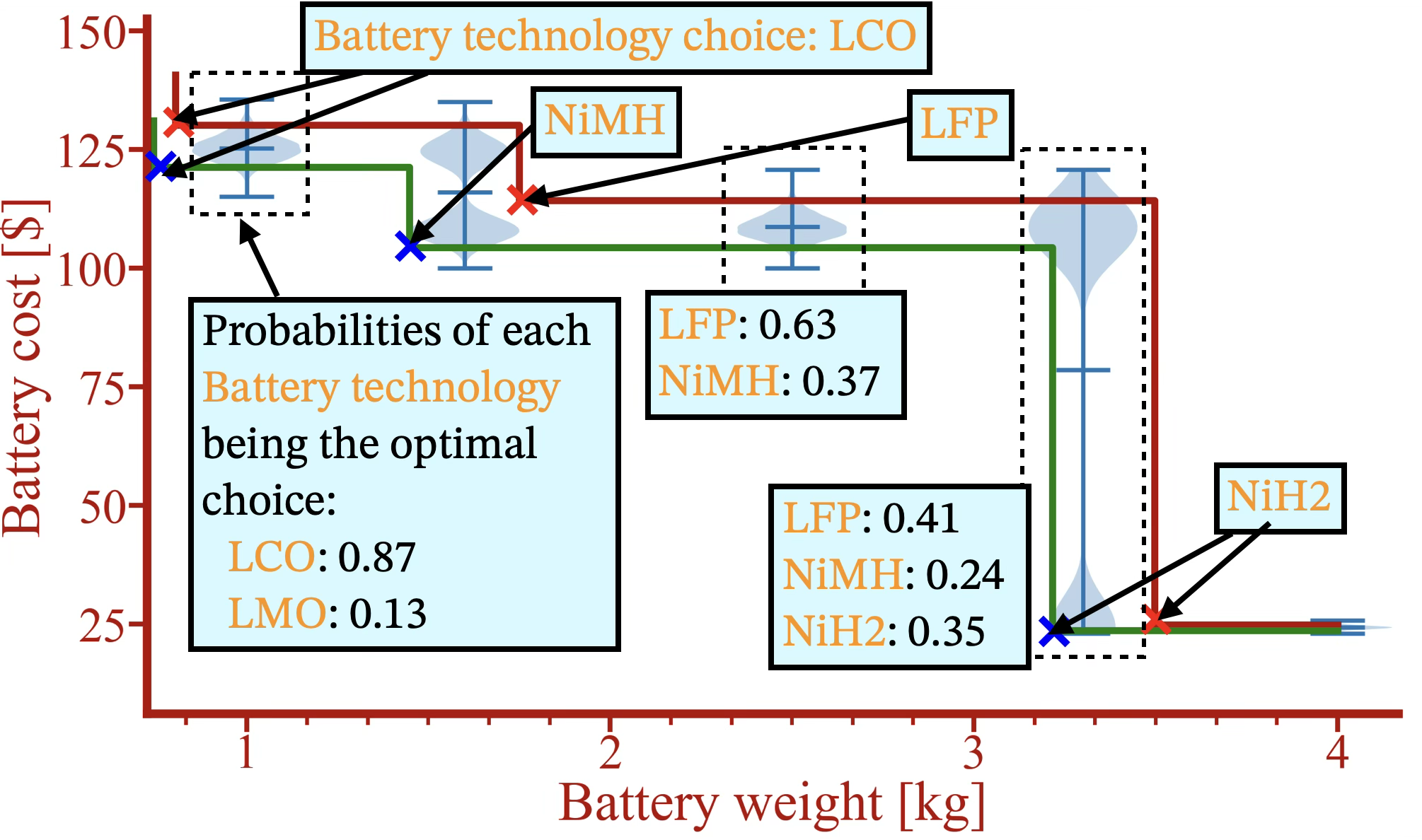}
    \subcaption{\rebuttal{Battery component, $\tup{\mu,\randomVarAlpha_\Battery}$.}}
    \label{fig:uav-battery-results-all-after}
\end{subfigure}
\begin{subfigure}{0.5\columnwidth}
    \includegraphics[scale=\subfigurescale]{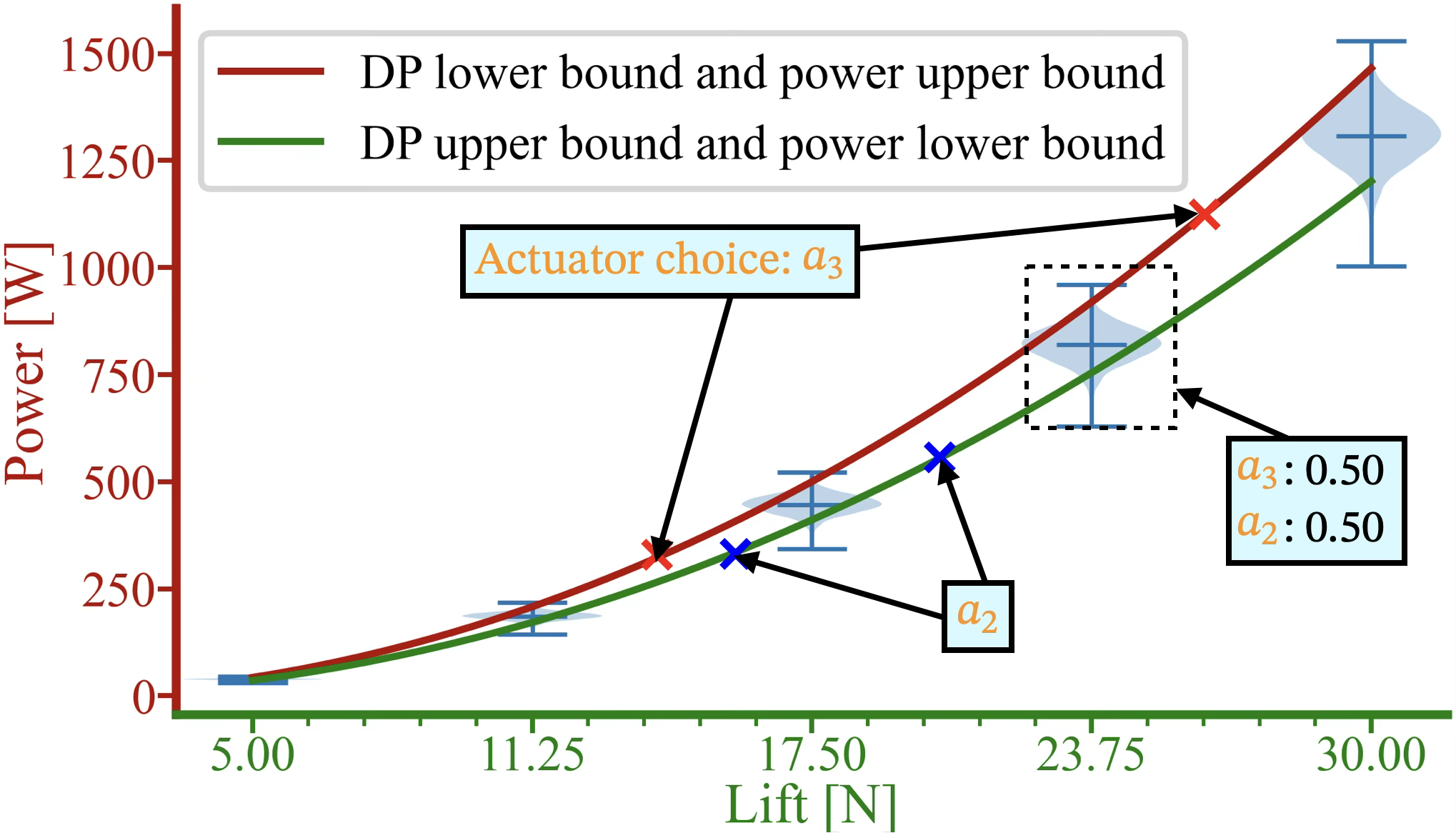}
    \subcaption{\rebuttal{Actuator component, $\tup{\mu,\randomVarAlpha_\Actuation}$.}}
    \label{fig:uav-actuator-results-all-after}
\end{subfigure}
}

\begin{subfigure}{\columnwidth}
    \includegraphics[width=\linewidth]{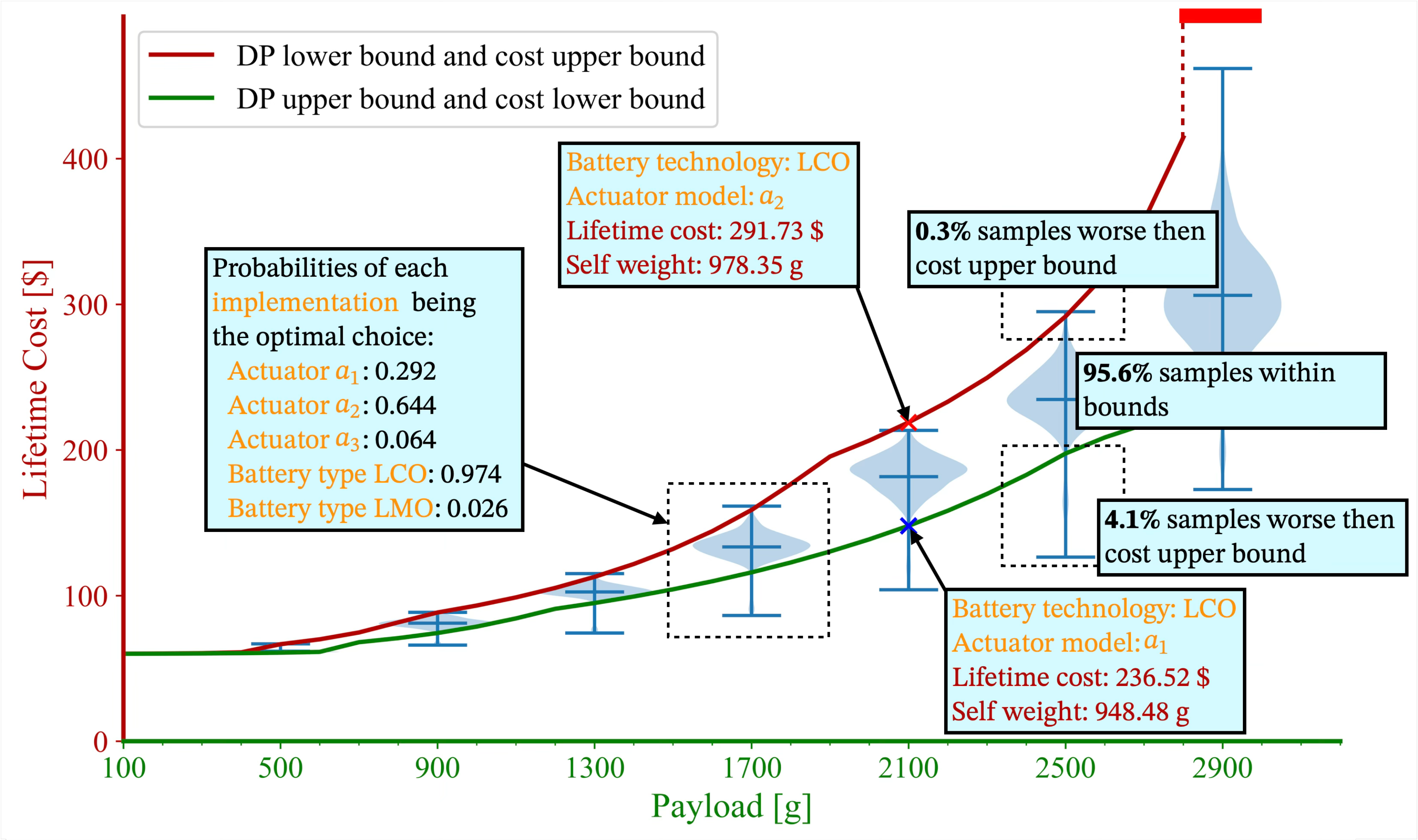}
    \subcaption{
        \rebuttal{\Glsuav system, $\tup{\mu,\randomVarAlpha_{\mathrm{UAV}}^{\text{post}}}$.}
    }
    \label{fig:uav-design-results-choosing-all-after}
\end{subfigure}

\caption{
\rebuttal{
Uncertain design results for the task-driven \glsuav and selected components, maximally adaptive benchmark.
Upper and lower curves are inner confidence bounds; violin plots are distributions of minimal \R{resource} via \cref{ex:distri-minimal-resource}.
Some distributions are annotated with the probability of each \I{implementation} being optimal and share of samples inside\slash outside the bound.
}
}
    \label{fig:uav-design-results-choosing-all-after-with-components}
\end{figure}

\subsection{Adaptive decision processes and design results}\label{subsec:design-results-with-adaptive-levels-uav}
\rebuttal{The maximally adaptive benchmark postpones every discrete \I{implementation} choice until the parameters are observed; practical processes restrict what may be postponed and what must be made first.
We compare three adaptivity levels, all allowing fully adaptive \emph{recourse} allocations (actuator operating point, battery weight) within components, differing only in how the discrete actuator and battery choices may depend on observations.}

\paragraph{Non-adaptive} 
\rebuttal{\II{Actuator} and \II{battery technology} are chosen before any observation, while actuator \II{operation} and battery \II{weight} are optimized after uncertainty is realized.
In \cref{fig:non-adaptive-uav-design} the policy $\measureKernelPolicy_{A,B}$ is a kernel with no input returning a distribution over $\actuatorset \times \batteryset$: a possibly randomized commitment to an actuator--battery pair.}
\rebuttal{Observation kernels then give distributions over the realized specifications of the chosen components, e.g.\ $\measureKernelObs_\Actuation \colon \actuatorset \to \QUSDistriOf{\Reals^5_{\geq}}$, $\actuator_i \mapsto \big\langle \distmu, \randomVarAlpha_{\actuator_i} \colon \outcomeomega \mapsto \tup{m_{\actuator_i}, c_{\actuator_i}, v_{\actuator_i}, p_{0,i}, p_{1,i}} \big\rangle$, with $p_{0,i}, p_{1,i}$ coordinates of $\outcomeomega$; $\measureKernelObs_\Battery, \measureKernelObs_\Task$ are analogous.}
The system model $\measureKernelUAV \colon \specSetS_\Actuation \measureProduct \specSetS_\Battery \measureProduct \specSetS_\Task \mapArr \QUSDistriOf{\qusdpOf{\funPosetF}{\resPosetR}}$ then returns a distribution over system-level \glspldp.
\rebuttal{
Here $\measureKernelUAV$ returns a delta distribution (a constant random variable) over $\qusdpOf{\funPosetF}{\resPosetR}$ per \Sp{specification}: with $\mapf$ as in \cref{eq:uav-system-rv-choosing-after-all} and $\mapg_T, \mapg_\Battery, \mapg_\Actuation$ returning deterministic component \glspldp for the corresponding \Sp{specifications} (notation-wise omitting other deterministic components),
$
    \measureKernelUAV
    (\specs_\Actuation, \specs_\Battery, \specs_T)
    =
    \tup{\distmu, \outcomeomega \mapsto \mapf(\mapg_T(\specs_T), \mapg_\Battery(\specs_\Battery), \mapg_\Actuation(\specs_\Actuation), \cdots)}.
$
}
Decisions inside the \glsdp (intermediate \R{resource}/\F{functionality} pairs, actuator operation, battery weight) are made with all outcomes observed, and we read off the random minimal \RI{lifetime cost} per \FI{payload} by post-composing $\mapr_{\min}$ as a deterministic kernel.

\paragraph{Partly adaptive} 
The \II{actuator} is chosen before observing its parameters, the \II{battery technology} after observing the realized actuator \Sp{specifications}.
In \cref{fig:partly-adaptive-uav-design}, $\measureKernelPolicy_\Actuation$ commits a (randomized) choice with no observation while $\measureKernelPolicy_\Battery$ maps observed actuator specifications to a distribution over technologies.
This is a three-stage re-parameterization: the third-stage choice depends on a second-stage observation, which in turn depends on the first-stage choice (\cref{fig:uav-adaptive-dp-battery-tech-after-actuation}).

\paragraph{Fully adaptive}
The parameters of \emph{all} actuator models and battery technologies are observed before selecting.
In \cref{fig:fully-adaptive-uav-design}, $\measureKernelObs_{\actuatorset,\batteryset}$ outputs a distribution over all parameters of \cref{tab:param-actuator-batteries}, from which $\measureKernelPolicy_{\Actuation,\Battery}$ picks a pair given the observed task number and \FI{payload}.
This is the policy-based counterpart of the lifted-union benchmark: always selecting the optimal pair per realized parameter and \FI{payload} reproduces the post-observation semantics.

\begin{figure}[tb]
\newcommand{\subfigscale}{0.18}

\begin{subfigure}{\columnwidth}
    \centering
    \includegraphics[scale=\subfigscale]{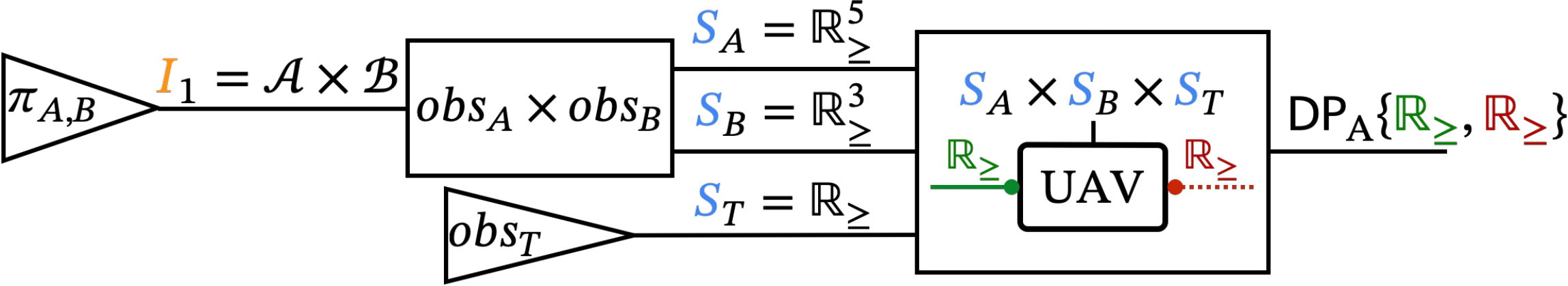}
    \subcaption{\rebuttal{Non-adaptive: \II{actuator} and \II{battery technology} chosen before any observation.}}
    \label{fig:non-adaptive-uav-design}

\end{subfigure}

\begin{subfigure}{\columnwidth}
    \centering
    \includegraphics[scale=\subfigscale]{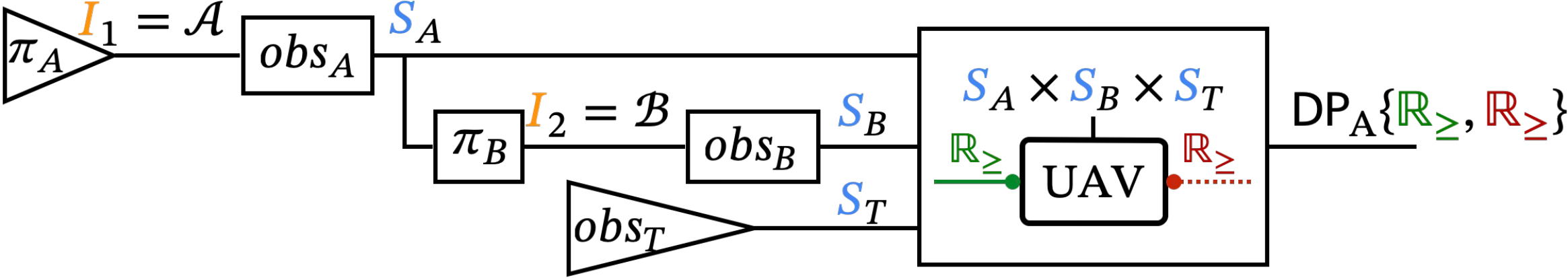}
    \subcaption{\rebuttal{Partly adaptive: \II{actuator} first, \II{battery technology} after observing its \Sp{specifications}.}}
    \label{fig:partly-adaptive-uav-design}

\end{subfigure}

\begin{subfigure}{\columnwidth}
    \centering
    \includegraphics[scale=\subfigscale]{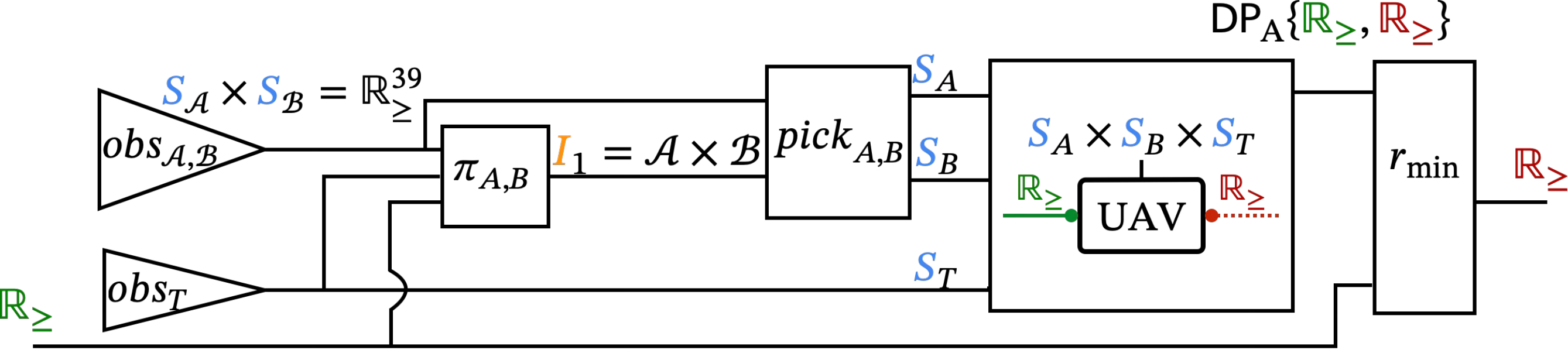}
    \subcaption{\rebuttal{Fully adaptive: all choices made after observing \Sp{specifications} and the \F{functionality} requirement.}}
    \label{fig:fully-adaptive-uav-design}

\end{subfigure}

\caption{Design processes for the \glsuav with different adaptivity levels.}
\label{fig:uav-diagrams-different-adaptive-levels}

\end{figure}

\begin{figure}[tb]
    \centering
    \includegraphics[width=\linewidth]{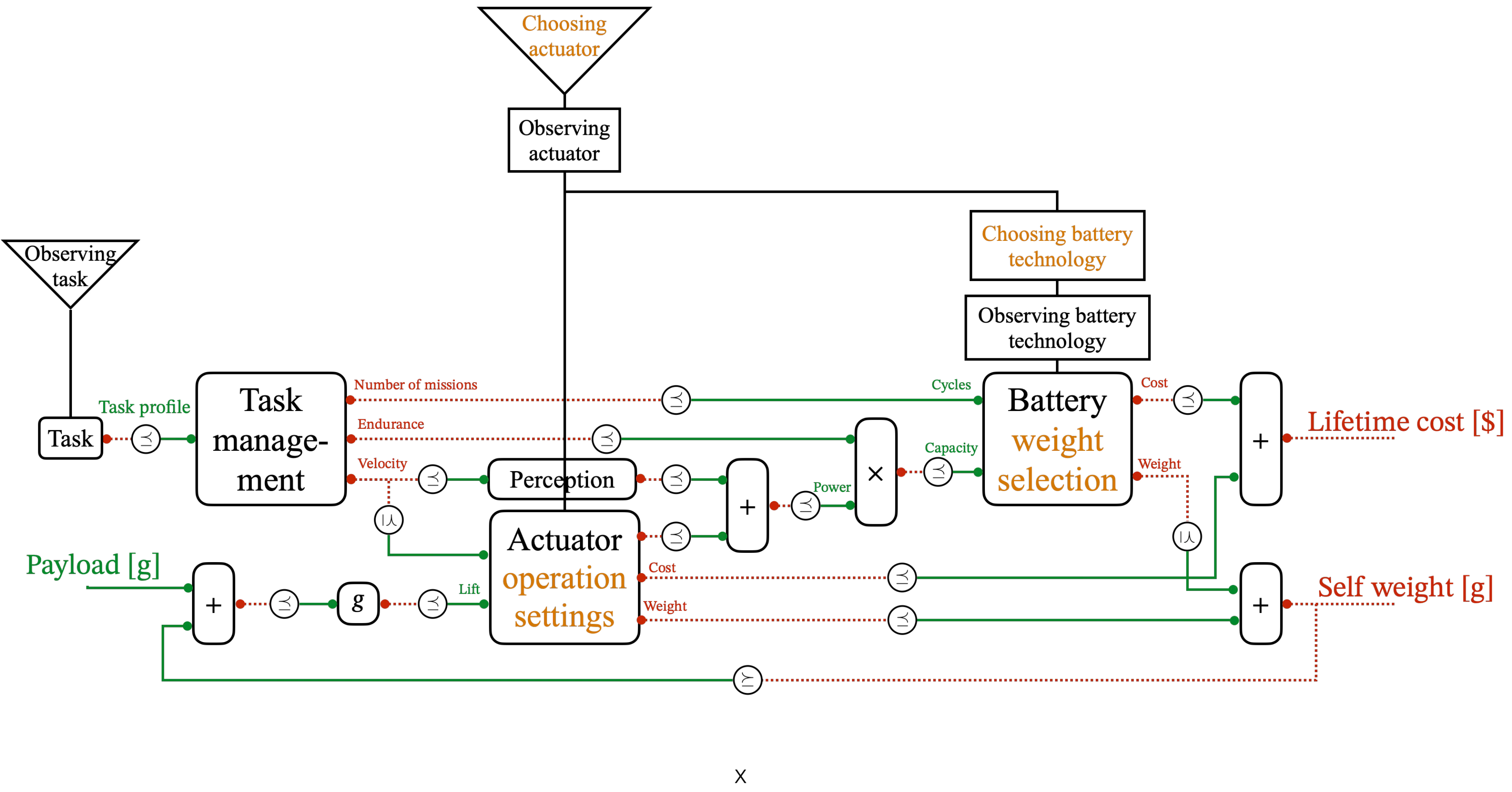}
    \caption{Partly adaptive decision process combined with the co-design decomposition of the task-driven \glsuav.
    }
    \label{fig:uav-adaptive-dp-battery-tech-after-actuation}
\end{figure}

\paragraph{Policies used for comparison}
\rebuttal{The framework allows general, including randomized, policies, which matter under chance constraints or explicit risk measures~\cite{shenChanceConstrainedProbability2023}.
We let the policies in \cref{fig:non-adaptive-uav-design,fig:partly-adaptive-uav-design} depend on \FI{payload} and use a simple deterministic template: \emph{given the observations, choose the \I{implementation} most likely to be optimal for minimal \RI{lifetime cost}}, estimating from Monte Carlo samples the probability that each implementation attains the minimum and taking the maximum a posteriori option.
This approximately optimizes expected minimal cost under each information structure.}

\paragraph{Results and interpretation}
\Cref{fig:uav-design-results-adaptive-levels} reports the distributions of minimal \RI{lifetime cost} for several \FI{payloads}.
Improved adaptivity substantially reduces expected \RI{lifetime cost} in some \FI{payload} regimes (near feasibility boundaries, where technology switching matters most), while the upper tails may not improve and can even worsen.
This is the distinction between ``average'' performance and low-probability, high-impact outcomes: exactly the trade-off interval bounds cannot quantify and distributional co-design makes explicit.

\begin{figure}[t]
    \centering
    \includegraphics[width=0.9\linewidth]{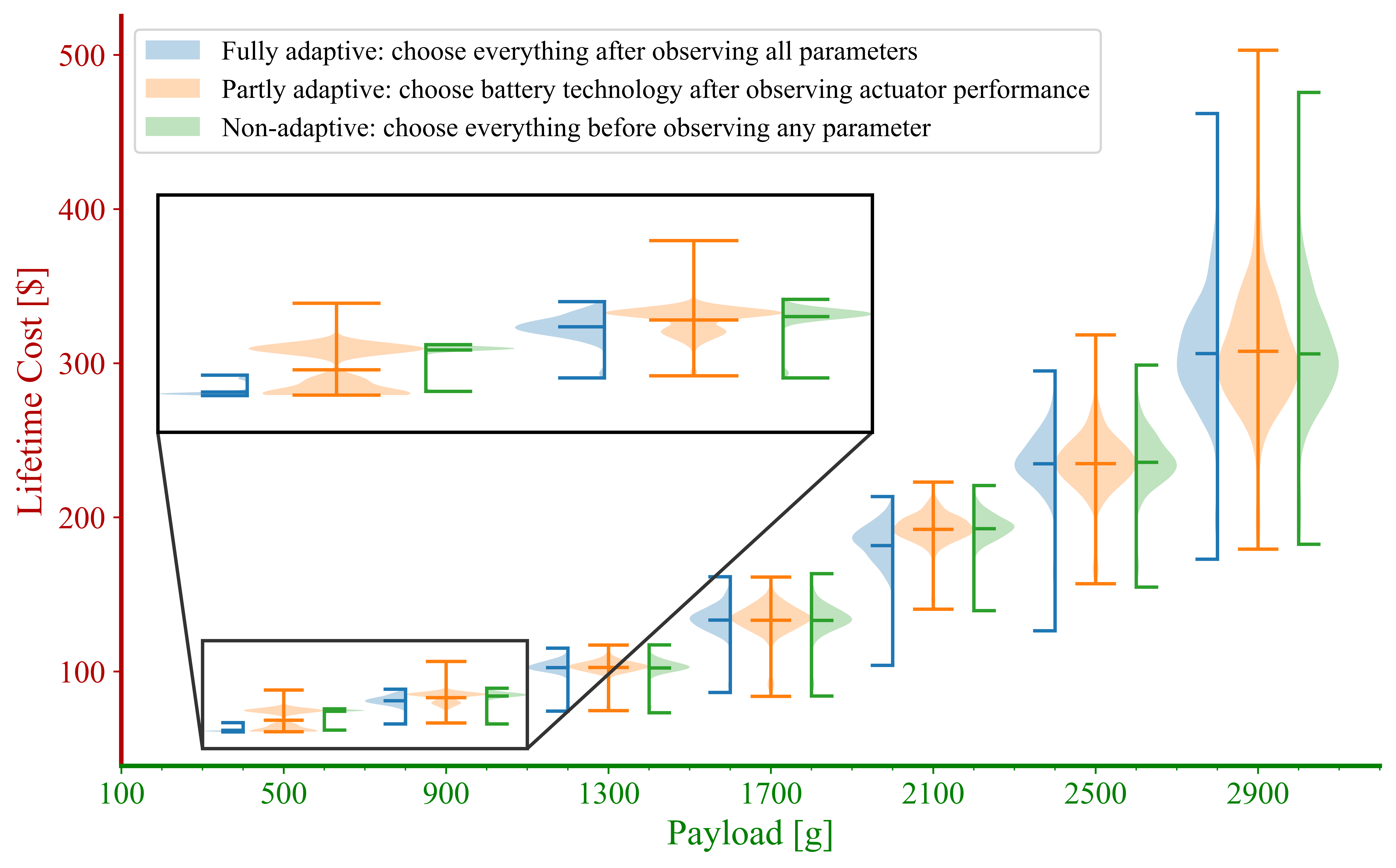}
    \caption{Design results for three adaptivity levels: distributions of minimal \RI{lifetime cost} induced by each design process at the given \F{payloads}.}
    \label{fig:uav-design-results-adaptive-levels}
\end{figure}

\section{Conclusions and future work}\label{sec:conclusion}
\rebuttal{We developed a compositional framework for co-design under \emph{distributional} uncertainty with explicit \emph{adaptive} multi-stage decision-making.
Building on monotone co-design, we used \glsxtrlong{abk:qms}\slash \glsxtrlong{abk:qus} to define distributions over \glsxtrlongpl{abk:dp} that avoid measurability pathologies: \glsdp compositions (series, parallel, feedback, union, intersection) remain measurable and lift to those distributions, feasibility probabilities are computable, stochasticity in \F{functionality}\slash \R{resources} are representable, and deterministic theory becomes a special case.
We introduced adaptive design processes via Markov-kernel re-parameterizations, supporting uncountable specification spaces, dependence between specifications and choices, and risk-aware objectives; and formalized \emph{queries} and \emph{observations} as the interface between model and engineering questions, including probability-of-feasibility, confidence bounds, and minimal-resource observations, all illustrated on an uncertain task-driven \glsuav study.}

\rebuttal{Several directions remain.
Algorithmically, we rely on Monte Carlo estimation; a natural next step is \emph{compositional} estimators and probability bounds exploiting the co-design graph, including variance-reduced sampling, concentration certificates, and deterministic bounds under monotonicity.
On the modeling side, risk measures and constraints (chance constraints, CVaR-type objectives, distributionally robust variants) can enter the query layer directly, alongside policy synthesis under partial observations, and the framework suggests an end-to-end learning loop: infer and update specification distributions from data, propagate uncertainty compositionally, and optimize adaptive policies at the system level.
Finally, given \cite{furter2025composableuncertaintysymmetricmonoidal}, an end-to-end formalization of further uncertainty layers (e.g.\ ambiguity sets \cite{duttaDistributionalRobustVerification25}) is of great interest.}

\bibliographystyle{IEEEtran}
\bibliography{bibliography}

\end{document}